\newtheorem{thm}{Theorem}
\newtheorem{lem}[thm]{Lemma}
\newtheorem{prop}[thm]{Proposition}
\newtheorem*{conj}{Conjecture}
\theoremstyle{definition}
\newtheorem*{remark}{Remark}
\theoremstyle{definition}
\xpatchcmd{\proof}{\itshape}{\normalfont\proofnameformat}{}{}
\newcommand{\proofnameformat}{}
\begin{document}

\renewcommand{\proofnameformat}{\bfseries}

\begin{center}
{\Large\textbf{On the distribution of Sudler products and Birkhoff sums for the irrational rotation}}

\vspace{10mm}

\textbf{Bence Borda}

{\footnotesize Graz University of Technology

Steyrergasse 30, 8010 Graz, Austria

and

Alfr\'ed R\'enyi Institute of Mathematics

Re\'altanoda utca 13--15, 1053 Budapest, Hungary

Email: \texttt{borda@math.tugraz.at}}

\vspace{5mm}

{\footnotesize \textbf{Keywords:} toral translation, Diophantine approximation, quadratic irrational,

Fourier series, central limit theorem, temporal limit theorem}

{\footnotesize \textbf{Mathematics Subject Classification (2020):} 11K60, 37A50, 37E10}
\end{center}

\vspace{5mm}

\begin{abstract}
We study the value distribution of the Sudler product $\prod_{n=1}^N |2 \sin (\pi n \alpha )|$ and the Diophantine product $\prod_{n=1}^N (2e\| n \alpha \|)$ for various irrational $\alpha$, as $N$ ranges in a long interval of integers. At badly approximable irrationals these products exhibit strong concentration around $N^{1/2}$, and at certain quadratic irrationals they even satisfy a central limit theorem. In contrast, at almost every $\alpha$ we observe an interesting anticoncentration phenomenon when the typical and the extreme values are of the same order of magnitude. Our methods are equally suited for the value distribution of Birkhoff sums $\sum_{n=1}^N f(n \alpha )$ for circle rotations. Using Diophantine approximation and Fourier analysis, we find the first and second moment for an arbitrary periodic $f$ of bounded variation, and (almost) prove a conjecture of Bromberg and Ulcigrai on the appropriate scaling factor in a so-called temporal limit theorem. Birkhoff sums also satisfy a central limit theorem at certain quadratic irrationals.
\end{abstract}

\section{Introduction}

The so-called Sudler product
\[ P_N (\alpha ):=\prod_{n=1}^N |2 \sin (\pi n \alpha )| , \qquad \alpha \in \mathbb{R} \]
was first studied by Erd\H{o}s, Szekeres \cite{ESZ} and Sudler \cite{SU} in the context of restricted partition functions. Sudler products later appeared in several areas of mathematics including $q$-series \cite{LU2}, dynamical systems and KAM theory \cite{KT}, Zagier's quantum modular forms \cite{ZA} and hyperbolic knots in algebraic topology \cite{BD}. They also play an important role in a counterexample to the Baker--Gammel--Wills conjecture on Pad\'e approximants \cite{LU1} and the solution of the Ten Martini Problem \cite{AJ}. The ubiquity of the Sudler product is explained by the large number of its different representations. While defined as a trigonometric product, $P_N (\alpha )$ is also the modulus of the $q$-product $(1-q)(1-q^2) \cdots (1-q^N)$ with $q=e^{2 \pi i \alpha}$ on the unit circle. In addition, its logarithm $\log P_N (\alpha ) = \sum_{n=1}^N f(n \alpha )$ is a Birkhoff sum for the irrational rotation with $f(x)=\log |2 \sin (\pi x)|$. For a general overview of Birkhoff sums for toral translations we refer to the survey \cite{DF}. In particular, our results fit into the category of Birkhoff sums with a logarithmic singularity. The main goal of this paper is to study the value distribution of $P_N (\alpha)$ as $N$ ranges over a long interval of integers.

It is not surprising that the asymptotics of $P_N (\alpha)$ as $N \to \infty$ depends sensitively on the Diophantine properties of the irrational $\alpha$. Refining results of Lubinsky \cite{LU2}, in a series of recent papers Aistleitner, Grepstad et al.\ \cite{AB,AB2,ATZ,GKN,GKN2,GNZ} gave precise estimates for the extreme values of $P_N (\alpha)$ in terms of the continued fraction of $\alpha$. Solving a long-standing open problem of Erd\H{o}s and Szekeres, in \cite{GKN} the golden ratio was shown to satisfy
\begin{equation}\label{goldenratio}
1 \ll P_N \bigg( \frac{1+\sqrt{5}}{2} \bigg) \ll N .
\end{equation}
The same holds for some (but not all) quadratic irrationals. Both the upper and the lower bound in \eqref{goldenratio} are sharp; in fact, for an arbitrary irrational $\alpha$ there exist positive constants $C_1(\alpha), C_2(\alpha)$ such that $P_N(\alpha) \le C_1(\alpha)$ and $P_N(\alpha) \ge C_2(\alpha) N$ for infinitely many $N$. The range of oscillations of $P_N (\alpha )$ thus span at least a factor of $N$, with the golden ratio (and certain other irrationals with small partial quotients) minimizing this range. In contrast, our first result shows that $P_N (\alpha)$ concentrates around $N^{1/2}$ for the majority of integers $N$.
\begin{thm}\label{badlyapproxtheorem} Let $\alpha$ be a badly approximable irrational. For any $M \ge 1$ and $t>0$,
\[ \frac{1}{M} \left| \left\{ 1 \le N \le M \, : \, N^{1/2} e^{-t \sqrt{\log 2N}} \le P_N (\alpha ) \le N^{1/2} e^{t \sqrt{\log 2N}} \right\} \right| = 1-O \left( t^{-2} \right) \]
with an implied constant depending only on $\alpha$. In particular, for any sequence $t_N \to \infty$ the set
\[ \left\{ N \in \mathbb{N} \, : \, N^{1/2} e^{-t_N \sqrt{\log N}} \le P_N (\alpha ) \le N^{1/2} e^{t_N \sqrt{\log N}} \right\} \]
has asymptotic density $1$.
\end{thm}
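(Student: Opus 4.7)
The plan is to reduce both statements to a single second-moment estimate
\[ \sum_{N=1}^M \bigl( \log P_N(\alpha) - \tfrac{1}{2} \log N \bigr)^2 \ll M \log M, \]
with implied constant depending only on $\alpha$. Splitting $[1,M]$ into dyadic blocks on which $\log 2N$ is essentially constant, this gives $\sum_{N=1}^M (\log P_N(\alpha) - \tfrac{1}{2}\log N)^2 / \log 2N \ll M$, and Chebyshev's inequality immediately yields the first assertion with the required $O(t^{-2})$ measure bound. For the density-$1$ conclusion, I would apply the first assertion inside each dyadic block $[2^{j-1},2^j]$ with $t=\min_{N \in [2^{j-1},2^j]} t_N$, which tends to $\infty$ with $j$; this produces $o(1)$ density of exceptional $N$ in every dyadic block, whence the exceptional set has density zero in $[1,M]$ as $M\to\infty$.

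To prove the variance bound, I would exploit the Fourier expansion $\log|2\sin \pi x|=-\sum_{k \ge 1}k^{-1}\cos(2\pi k x)$, which gives
\[ \log P_N(\alpha) = -\sum_{k=1}^\infty \frac{1}{k}\,\mathrm{Re}\!\!\sum_{n=1}^N e^{2\pi i k n \alpha}, \]
the inner geometric sum being of modulus $\le \min\{N, (2\|k\alpha\|)^{-1}\}$. Truncating at $k=K$ polynomial in $M$ and controlling the tail by a Koksma-type inequality, the dominant part is captured by the frequencies $k \le K$. Using the badly approximable hypothesis $\|k\alpha\|\gg 1/k$, these split into a \emph{resonant} group of $k$ close to a convergent denominator $q_j$ with $q_j\le N$ --- of which there are $\asymp \log N$ by the geometric growth of the $q_j$ --- and an \emph{off-resonant} group. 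The resonant group supplies the centring $\tfrac{1}{2}\log N$, each resonant scale contributing a mean of $\tfrac{1}{2}+O(1)$; the off-resonant group is mean-zero and oscillatory. Opening the square and averaging over $N\le M$, the rapid separation of the $q_j$ forces almost-orthogonality of the cross-terms, collapsing the double sum to a diagonal contribution of size $O(M\log M)$.

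The main obstacle is the simultaneous identification of the centring $\tfrac{1}{2}\log N$ and sharp $\log N$ control on the variance. The cleanest route is the Ostrowski representation $N=\sum_j b_j q_j$: for badly approximable $\alpha$ the digits $b_j$ are uniformly bounded, so $\log P_N(\alpha)$ decomposes across scales into an almost-independent sum with uniformly bounded increments and a deterministic per-scale mean. Summing over the $\asymp \log N$ active scales produces both the mean $\tfrac{1}{2}\log N+O(1)$ and a variance of order $\log N$, while the cross-scale correlations are negligible thanks to the geometric ratio of successive $q_j$. Substituting into the variance bound and performing a direct second-moment computation closes the argument.
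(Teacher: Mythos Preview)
Your high-level plan --- establish a second-moment bound of order $M\log M$, then apply Chebyshev and pass from $\log M$ to $\log N$ --- is exactly the paper's strategy, and the dyadic-block argument for the density-$1$ conclusion is fine. The paper handles the $\log M \to \log N$ switch slightly differently (restricting to $N \ge M/\log^2 M$ rather than dyadic blocks), but that is cosmetic.

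The genuine problem is your account of where the centring $\tfrac{1}{2}\log N$ comes from. You write that the ``resonant'' frequencies $k$ near convergent denominators $q_j \le N$ supply it, with each of the $\asymp \log N$ scales contributing a mean of $\tfrac{1}{2}+O(1)$. This is not correct. From the identity
\[
-\sum_{n=1}^N \cos(2\pi m n\alpha) = \frac{1}{2}\Bigl(1 - \frac{\sin(\pi(2N+1)m\alpha)}{\sin(\pi m\alpha)}\Bigr),
\]
each frequency $m$ contributes, after averaging over $N\in[1,M]$, an amount $\tfrac{1}{2m}+O\bigl(\tfrac{1}{m}\min\{M^{-1}\|m\alpha\|^{-2},M\}\bigr)$, the error coming from the Fej\'er kernel. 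The constant pieces $\tfrac{1}{2m}$ summed over \emph{all} $m$ up to the truncation give the harmonic sum $\tfrac{1}{2}\log M$; summing only over the convergent denominators $m=q_j$ would give $\sum_j \tfrac{1}{2q_j}=O(1)$, since the $q_j$ grow geometrically. So the centring is a bulk contribution from every frequency, not a resonant effect, and the resonant frequencies are instead exactly what drive the \emph{variance} via the diagonal sum $\sum_m (m\|m\alpha\|)^{-2}$. If you follow your stated mechanism you will not recover $\tfrac{1}{2}\log N$.

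The Ostrowski paragraph is a legitimate alternative in spirit, but as written it inherits the same unverified claim: you assert a ``deterministic per-scale mean'' summing to $\tfrac{1}{2}\log N$ without saying what that mean is or why it equals the right constant. This would need an independent computation (for instance via the symmetry $\log P_N(\alpha)+\log P_{q_k-N-1}(\alpha)=\log q_k+O(1)$, which the paper records), and is not a consequence of bounded Ostrowski digits alone.
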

\noindent Recall that an irrational $\alpha$ is called badly approximable if $\inf_{q \ge 1} q \| q \alpha \| >0$, where $\| \cdot \|$ denotes the distance from the nearest integer. What we actually show is that for any badly approximable $\alpha$,
\begin{equation}\label{expectedbadlyapprox}
\frac{1}{M} \sum_{N=1}^M \log P_N (\alpha ) = \frac{1}{2} \log M + O(\log \log M)
\end{equation}
and
\begin{equation}\label{variancebadlyapprox}
\log M \ll \frac{1}{M} \sum_{N=1}^M \left( \log P_N (\alpha ) -\frac{1}{2} \log M \right)^2 \ll \log M
\end{equation}
with implied constants depending only on $\alpha$. From a probabilistic point of view the previous two formulas represent the expected value and the variance of $\log P_N (\alpha)$ when $N$ is chosen randomly from $[1,M]$, and Theorem \ref{badlyapproxtheorem} follows from the Chebyshev inequality. Lubinsky \cite{LU2} proved that for a badly approximable $\alpha$ the extreme values are
\[ \log M \ll \max_{1 \le N \le M} |\log P_N (\alpha )| \ll \log M . \]
Thus the square root of the variance is negligible compared to the extreme values, explaining the higher concentration in Theorem \ref{badlyapproxtheorem} compared to \eqref{goldenratio}.

The estimates for the variance and the extreme values in the previous two formulas are optimal in the sense that both quantities can oscillate between two different positive constants times $\log M$ for a suitable badly approximable $\alpha$. For quadratic irrational $\alpha$, however, we can say more:
\begin{equation}\label{variancequadratic}
\frac{1}{M} \sum_{N=1}^M \left( \log P_N (\alpha ) -\frac{1}{2} \log M \right)^2 = \sigma (\alpha )^2 \log M + O((\log \log M)^4)
\end{equation}
with an explicitly computable constant $\sigma (\alpha ) >0$, and \cite{AB}
\[ \max_{1 \le N \le M} \log P_N (\alpha ) = c(\alpha ) \log M +O(1), \qquad \min_{1 \le N \le M} \log P_N (\alpha ) = (1-c(\alpha )) \log M +O(1) \]
with some constant $c(\alpha ) \ge 1$ and implied constants depending only on $\alpha$. Here e.g.\ $c((1+\sqrt{5})/2)=1$ (cf.\ formula \eqref{goldenratio}), but the precise value of $c(\alpha )$ is not known in general. For certain simple quadratic irrationals including the golden ratio and $\sqrt{2}$, we also prove a central limit theorem with an estimate for the rate of convergence in the Kolmogorov metric.
\begin{thm}\label{CLTtheorem} Let $a \ge 1$ be an integer, and consider the quadratic irrational
\[ \alpha = [0;a,a,a,\ldots] = \frac{\sqrt{a^2+4}-a}{2} . \]
For any $M \ge 3$ and $t \in \mathbb{R}$,
\[ \frac{1}{M} \bigg| \bigg\{ 1 \le N \le M \, : \, \frac{\log P_N (\alpha ) - \frac{1}{2} \log N}{\sigma (\alpha ) \sqrt{\log N}} \le t \bigg\} \bigg| = \int_{-\infty}^{t} \frac{e^{-x^2/2}}{\sqrt{2 \pi}} \, \mathrm{d} x + O \left( \frac{(\log \log M)^2}{(\log M)^{1/6}} \right) \]
with an implied constant depending only on $\alpha$.
\end{thm}

Our approach is to consider $\log P_N (\alpha ) =\sum_{n=1}^N f(n \alpha )$ as a Birkhoff sum for the irrational rotation with $f(x)=\log |2 \sin (\pi x)|$, and to exploit the fact that $f(x)$ has the particularly simple Fourier series expansion
\begin{equation}\label{fourierseries}
\log |2 \sin (\pi x)| = - \sum_{m=1}^{\infty} \frac{\cos (2 \pi m x)}{m}.
\end{equation}
The similar Birkhoff sum
\[ S_N (\alpha ):= \sum_{n=1}^N \left( \{ n \alpha \} -1/2 \right) , \]
where $\{ \cdot \}$ denotes fractional part, was studied in detail by Beck \cite{BE1,BE2,BE3}. Note that $\log |2 \sin (\pi x)|$ and $\pi (\{ x \} -1/2)$ are harmonic conjugates: they are the real and imaginary parts of $\log (1-z)$ (holomorphic on $\mathbb{C} \backslash [1,\infty )$ and defined using the principal branch of the logarithm) on the unit circle $z=e^{2 \pi i x}$. Beck proved that for a quadratic irrational $\alpha$ the expected value and the variance\footnote{In fact, Beck proved the formula for the variance with error term $O(\sqrt{\log M} \log \log M)$. Following the methods of Section \ref{generalsection} of the present paper, this can be improved to $O((\log \log M)^4)$.} of $S_N (\alpha)$ are
\[ \begin{split} \frac{1}{M} \sum_{N=1}^M S_N (\alpha ) &= E(\alpha ) \log M +O(1), \\ \frac{1}{M} \sum_{N=1}^M \left( S_N (\alpha ) - E (\alpha ) \log M \right)^2 &= \frac{\sigma (\alpha )^2}{\pi^2} \log M + O \left( (\log \log M)^4 \right) , \end{split} \]
and $S_N (\alpha )$ satisfies the central limit theorem.
\begin{thm}[Beck \cite{BE1,BE2,BE3}]\label{Becktheorem} Let $\alpha$ be a quadratic irrational. For any $M \ge 3$ and $t \in \mathbb{R}$,
\[ \frac{1}{M} \bigg| \bigg\{ 1 \le N \le M \, : \, \frac{S_N (\alpha ) - E(\alpha ) \log N}{(\sigma (\alpha ) /\pi ) \sqrt{\log N}} \le t \bigg\} \bigg| = \int_{-\infty}^{t} \frac{e^{-x^2/2}}{\sqrt{2 \pi}} \, \mathrm{d} x + O \left( \frac{\log \log M}{(\log M)^{1/10}} \right) \]
with an implied constant depending only on $\alpha$.
\end{thm}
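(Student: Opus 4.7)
The plan is to mirror the approach that will work for $P_N(\alpha)$, exploiting that $\pi(\{x\}-1/2) = -\sum_{m=1}^{\infty} \sin(2\pi m x)/m$ is the harmonic conjugate of the sawtooth series \eqref{fourierseries}. We have
\[ S_N(\alpha) = -\frac{1}{\pi}\sum_{m=1}^{\infty} \frac{1}{m} \sum_{n=1}^N \sin(2 \pi m n \alpha), \]
so the same analytic machinery---bounds on exponential sums, Ostrowski representation of $N$, and renormalization under the Gauss map---carries over almost verbatim. The first step is to truncate the Fourier series at a level adapted to $M$ and bound the tail uniformly for $N \le M$, reducing everything to a finite trigonometric polynomial in $n\alpha$.

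Next I would establish the first and second moment asymptotics. After swapping orders of summation,
\[ \frac{1}{M}\sum_{N=1}^M S_N(\alpha) = -\frac{1}{\pi M}\sum_{m=1}^{\infty} \frac{1}{m}\sum_{n=1}^M (M-n+1) \sin(2\pi m n \alpha), \]
and the geometric-series bound $\bigl| \sum_{n=1}^N e^{2 \pi i m n \alpha}\bigr| \le 1/(2\|m\alpha\|)$ combined with Abel summation reduces the computation to controlling $\sum 1/(m\|m\alpha\|)$ over dyadic frequency ranges $m \asymp q_k$. For a badly approximable $\alpha$ each such range contributes $O(1)$, and the $\log M$ growth comes from summing over $k \ll \log M$. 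For the second moment, squaring the expansion, the diagonal $m_1=m_2$ produces the main term $(\sigma(\alpha)/\pi)^2 \log M$ via $\sin^2 = (1-\cos(2\cdot))/2$, while the off-diagonal contribution with $m_1 \ne m_2$ is governed by the Diophantine behaviour of the near-resonant frequencies $(m_1 \pm m_2)\alpha$.

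For the central limit theorem itself I would pass to the Ostrowski representation $N=\sum_k b_k q_k$ with $0\le b_k\le a_{k+1}$ and the usual no-carry constraint, and decompose $S_N(\alpha)$ into contributions from the frequency blocks $q_k \le m < q_{k+1}$. A renormalization identity expresses each block essentially as a function of the digit $b_k$ and the rotated point $\{q_k\alpha\}$; for a quadratic irrational the sequence $(a_k,\{q_k\alpha\})$ is eventually periodic, so these contributions form a near-stationary sequence indexed by scale. When $N$ is chosen uniformly in $[1,M]$, the digits $(b_k)$ are approximately uniform on $\prod_k \{0,1,\dots,a_{k+1}\}$ with only weak dependence across scales, so the scale contributions behave like a weakly dependent centered array whose variances accumulate to $(\sigma(\alpha)/\pi)^2 \log M$. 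A martingale CLT along scales---or equivalently the method of moments driven by cumulant bounds---then produces the Gaussian limit.

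The main obstacle I anticipate is the precise variance asymptotic with error $O((\log\log M)^4)$: one must rule out slow correlations between contributions from different scales, handle boundary effects from the no-carry constraint in the Ostrowski expansion, and identify $\sigma(\alpha)$ explicitly via the periodic structure. These difficulties ultimately dissolve because the eventual periodicity of the continued fraction pins the Gauss-map orbit onto a finite cycle with exponentially decaying correlations, but making this quantitative sharp enough to beat Beck's original $O(\sqrt{\log M} \log \log M)$ error and match the $(\log \log M)^4$ precision is where most of the work lies.
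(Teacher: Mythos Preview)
The paper does not contain a proof of this statement. Theorem~\ref{Becktheorem} is stated as a result of Beck and attributed to \cite{BE1,BE2,BE3}; the paper simply quotes it. The only place the paper comes close is in the proof of Theorem~\ref{CLTtheorem}, where it invokes Beck's ``Fourier series approach'' as a black box: it records Beck's approximation
\[
S_N(\alpha) - E(\alpha)\log M = \sum_{1\le m\le M\log M}\frac{\cos(\pi(2N+1)m\alpha)}{2\pi m\sin(\pi m\alpha)} + O(\log\log M),
\]
asserts that Beck proved the CLT for this trigonometric sum, and then observes that replacing $\cos$ by $\sin$ in the numerator gives the corresponding approximation for $\log P_N(\alpha)$, so that Beck's argument transfers verbatim. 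There is no independent proof of the CLT itself anywhere in the paper.

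Your sketch is therefore not comparable to anything the paper does; it is rather a plausible outline of Beck's original argument. The ingredients you list---Fourier truncation, diagonal/off-diagonal analysis for the second moment, Ostrowski decomposition of $N$, near-independence of the digit contributions across scales for a quadratic irrational---are indeed the components of Beck's proof in \cite{BE1,BE2,BE3}. One correction: Beck does not use a martingale CLT but rather a direct characteristic-function/moment computation exploiting the eventual periodicity of the continued fraction; and the $O((\log\log M)^4)$ variance error you mention is not Beck's---that is the improvement this paper claims for its own results (see the footnote after Theorem~\ref{Becktheorem} and formula~\eqref{variancequadratic}), while Beck's published error is $O(\sqrt{\log M}\log\log M)$. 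So the ``main obstacle'' you describe is not actually required for Theorem~\ref{Becktheorem} as stated.
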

\noindent Here $E(\alpha )$ is defined the following way: given the continued fraction expansion $\alpha =[a_0;a_1,a_2,\dots ]$ with convergents $p_k/q_k = [a_0;a_1, \dots , a_k]$, we let
\[ E(\alpha ) = \lim_{k \to \infty} \frac{1}{12 \log q_k} \sum_{\ell =1}^k (-1)^{\ell} a_{\ell} . \]
Note that the limit exists by the periodicity of the partial quotients. For instance, $E((1+\sqrt{5})/2)=0$ (since $(1+\sqrt{5})/2=[1;1,1,\dots]$ so there is perfect cancellation in the alternating sum), but $E(\sqrt{3}) = 1/(12 \log (2+\sqrt{3}))$ (since $\sqrt{3}=[1;1,2,1,2,\dots ]$ and $q_k \approx (2+\sqrt{3})^{k/2}$). In contrast, the corresponding constant factor in the expected value of $\log P_N (\alpha )$ is always $1/2$, see \eqref{expectedbadlyapprox}.

The variances of $S_N (\alpha)$ and $\log P_N (\alpha)$, on the other hand, involve the same constant $\sigma (\alpha )$. The extra factor of $\pi^2$ is explained by the harmonic conjugate property mentioned above. The constant $\sigma (\alpha )$ also appears in the variance of certain lattice point counting problems \cite{BE1} and in the $L^2$-discrepancy of irrational lattices \cite{BO}. We refer to Beck \cite[Chapter 3]{BE1} for a way to compute $\sigma (\alpha )$ for general quadratic irrationals, based on deep arithmetic properties of the real quadratic field $\mathbb{Q} (\alpha )$. For instance, we have
\[ \sigma \bigg( \frac{1+\sqrt{5}}{2} \bigg)^2 = \frac{\pi^2}{60 \sqrt{5} \log \frac{1+\sqrt{5}}{2}} \quad \textrm{and} \quad \sigma (\sqrt{3})^2 = \frac{\pi^2}{24 \sqrt{3} \log (2+\sqrt{3})} . \]

Using the harmonic conjugate property, Theorems \ref{CLTtheorem} and \ref{Becktheorem} can actually be combined into a 2-dimensional central limit theorem, where the limit distribution is a mean zero Gaussian with the $2 \times 2$ identity matrix as covariance matrix.
\begin{thm}\label{2dimCLTtheorem} Let $a \ge 1$ be an integer, and consider the quadratic irrational
\[ \alpha = [0;a,a,a,\ldots] = \frac{\sqrt{a^2+4}-a}{2} . \]
For any $M \ge 3$ and any convex set $C \subseteq \mathbb{R}^2$,
\begin{multline*}
\frac{1}{M} \bigg| \bigg\{ 1 \le N \le M \, : \, \left( \frac{\log P_N (\alpha ) - \frac{1}{2} \log N}{\sigma (\alpha) \sqrt{\log N}}, \frac{S_N(\alpha ) -E(\alpha ) \log N}{(\sigma (\alpha ) /\pi ) \sqrt{\log N}} \right) \in C \bigg\} \bigg| \\ = \int_{C} \frac{e^{-|x|^2/2}}{2 \pi} \, \mathrm{d} x + O \left( \frac{(\log \log M)^2}{(\log M)^{1/6}} \right)
\end{multline*}
with an implied constant depending only on $\alpha$.
\end{thm}

Our proof of Theorems \ref{CLTtheorem} and \ref{2dimCLTtheorem} follows closely the rather long proof of Theorem \ref{Becktheorem} due to Beck. We decided to focus on the special irrationals $\alpha=[0;a,a,a,\ldots]$ because for these some of the technical complications can be avoided, resulting in a proof of reasonable length. Nevertheless, we believe that Beck's methods are fully applicable to Sudler products, and in particular that Theorems \ref{CLTtheorem} and \ref{2dimCLTtheorem} hold for all quadratic irrationals. Note that we kept the centering term $E(\alpha) \log N$ in Theorem \ref{2dimCLTtheorem} to emphasize this generality, even though for $\alpha=[0;a,a,a,\ldots]$ we actually have $E(\alpha)=0$. We conjecture that the sharp rate of convergence in all three theorems is $O((\log M)^{-1/2})$.

Beck \cite[p.\ 247]{BE1} also announced a more general form of Theorem \ref{Becktheorem}, stating that the central limit theorem remains true (with suitable centering and scaling, and without an explicit error term) provided that the continued fraction $\alpha =[a_0;a_1,a_2,\dots ]$ satisfies $a_k/(a_1^2+\cdots +a_k^2)^{1/2} \to 0$ as $k \to \infty$; he calls this an analogue of the classical Lindeberg condition for the central limit theorem in probability theory. A detailed proof has not been published yet. For this reason we conjecture that our Theorems \ref{CLTtheorem} and \ref{2dimCLTtheorem} in fact hold (with suitable centering and scaling, and without an explicit error term) under the same condition.

The distribution of the Birkhoff sum
\[ S_N (\alpha ,f) := \sum_{n=1}^N f(n \alpha ) \]
has also been studied with more general $1$-periodic functions $f$. Beck \cite{BE1,BE2,BE3} considered a quadratic irrational $\alpha$ and $f(x)=I_{[a,b]}(\{ x \}) -(b-a)$, where $I_{[a,b]}$ is the indicator of an interval $[a,b] \subset [0,1]$ with rational endpoints, and proved a central limit theorem analogous to Theorem \ref{Becktheorem} with $E(\alpha)$ and $\sigma (\alpha)$ replaced by explicitly computable constants depending on $\alpha$ and the interval $[a,b]$. More recently Dolgopyat et al.\ \cite{ADDS,BU,DF,DS1,DS2,DS3} proved similar temporal limit theorems for the irrational rotation as well as Anosov flows and horocycle flows. Here ``temporal'' refers to the fact that the time parameter $N$ is chosen randomly. The centering and scaling terms, however, were not explicitly computed in some of these results. In particular, generalizing Beck's result Bromberg and Ulcigrai \cite{BU} considered a badly approximable $\alpha$ and $f(x)=I_{[a,b]}(\{ x \}) -(b-a)$ with an interval $[a,b] \subset [0,1]$ whose length $0<b-a<1$ is badly approximable with respect to $\alpha$ in the sense that $\inf_{q \in \mathbb{Z} \backslash \{ 0 \}} |q| \cdot \| q \alpha -(b-a) \| >0$. They proved that $(S_N(\alpha ,f)-A_M)/B_M$ converges to the standard Gaussian distribution with the natural centering and scaling terms
\begin{equation}\label{AMBM}
A_M=\frac{1}{M} \sum_{N=1}^M S_N (\alpha, f) \quad \textrm{and} \quad B_M^2=\frac{1}{M}\sum_{N=1}^M (S_N(\alpha, f)-A_M)^2 ,
\end{equation}
and conjectured that $\log M \ll B_M^2 \ll \log M$. In this paper we basically settle their conjecture.
\begin{thm}\label{BUtheorem} Let $\alpha$ be a badly approximable irrational, and let $f(x)=I_{[a,b]}(\{ x \})-(b-a)$ with an interval $[a,b] \subset [0,1]$ of length $0<b-a<1$ such that $\inf_{q \in \mathbb{Z} \backslash \{ 0 \}} |q| \cdot \| q \alpha -$ $k(b-a) \| >0$ for all $k \in \{ 1,3,5,7,9 \}$. Then
\[ \log M \ll B_M^2 \ll \log M \]
with implied constants depending only on $\alpha$ and $(b-a)$.
\end{thm}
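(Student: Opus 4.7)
The plan is to adapt the Fourier-analytic variance computation that underlies the statement \eqref{variancebadlyapprox} for $\log P_N(\alpha)$, replacing the Fourier series \eqref{fourierseries} of $\log |2 \sin (\pi x)|$ with that of $f$. Expand
\[ f(x) = \sum_{m \ne 0} c_m e^{2 \pi i m x}, \qquad c_m = \frac{\sin (\pi m (b-a))}{\pi m} e^{-\pi i m(a+b)}, \]
so that $S_N(\alpha, f) = \sum_{m \ne 0} c_m D_N(m \alpha)$ with $D_N(\theta) = \sum_{n=1}^N e^{2 \pi i n \theta}$. Since the Fourier series is only conditionally convergent, I would first truncate at a level $L$ polynomial in $M$ and show, by Abel summation combined with the Koksma--Hlawka bound $|S_N(\alpha,f)| \ll \log N$ valid for badly approximable $\alpha$, that the tail beyond $L$ is a negligible perturbation to the variance.

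Next I would rewrite $D_N(m\alpha) = \tilde{c}_m c_m^{-1} (e^{2\pi i m N\alpha} - 1)$ with $\tilde{c}_m = c_m \cdot e^{2\pi i m\alpha}/(e^{2\pi i m\alpha} - 1)$, so that $S_N - A_M = \sum_{m \ne 0} \tilde{c}_m e^{2\pi i m N \alpha} + o(1)$ after subtracting the $N$-independent part. Expanding $|S_N - A_M|^2$ and averaging over $N \in [1, M]$, the diagonal $m_1 = m_2$ contributes $|\tilde{c}_m|^2 = |c_m|^2/(4 \sin^2(\pi m \alpha))$ per frequency, while the off-diagonal pairs $m_1 \ne m_2$ give a geometric average of size $O(1/(M\|(m_1-m_2)\alpha\|))$, summing to $o(\log M)$ by badly approximability of $\alpha$. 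This reduces the variance to
\[ B_M^2 = \frac{1}{2\pi^2} \sum_{0 < m \le L} \frac{\sin^2(\pi m(b-a))}{m^2 \sin^2(\pi m \alpha)} + o(\log M) . \]

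For the upper bound $B_M^2 \ll \log M$, I would group the summands by the convergent denominators $q_k$ of $\alpha$. Badly approximability forces $q_k$ to grow geometrically, so the truncation level $L$ covers $\asymp \log M$ levels. On level $k$, the three-distance theorem pins down the values $\|m\alpha\|$ for $m \in [q_{k-1}, q_k)$ to a near-arithmetic progression of size $\asymp j/q_k$ with $j = 1, 2, \ldots, q_k$; combined with $1/m^2 \asymp 1/q_k^2$ this yields a block contribution of $\sum_j O(1/j^2) = O(1)$, irrespective of $b - a$, and summing over levels gives $O(\log M)$.

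The main obstacle is the lower bound $B_M^2 \gg \log M$. Writing $\sin^2(\pi m (b-a)) = (1 - \cos(2\pi m(b-a)))/2$, the constant part produces $\gg \log M$ from the same block analysis, but the oscillating part $\sum_m \cos(2 \pi m (b-a))/(m^2 \sin^2(\pi m \alpha))$ could in principle cancel it. The dangerous scenario is that $2m(b-a)$ lies near an integer precisely at the $m$'s where $\sin^2(\pi m \alpha)$ is smallest, namely $m$ close to the convergent denominators $q_k$. The hypothesis $\inf_q |q| \cdot \|q \alpha - k(b-a)\| > 0$ forces $q \alpha$ to stay quantitatively away from $k(b-a)$ modulo $1$, ensuring that the oscillating sum at harmonic $k$ contributes only $O(1)$ per block. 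The reason odd $k \in \{1, 3, 5, 7, 9\}$ rather than only $k = 1$ are needed is that iteratively reducing the off-diagonal contributions left over from the first round of orthogonality produces further correlation sums at higher odd harmonics of $b-a$, and exactly five of these arise in the second-moment analysis. Verifying this bookkeeping, and tightening the combined error terms to $o(\log M)$ rather than $O(\log M)$, will be the technical heart of the argument.
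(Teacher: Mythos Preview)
Your reduction of $B_M^2$ to the Diophantine sum $\sum_{m=1}^M \sin^2(\pi m(b-a))/(m^2 \| m\alpha \|^2)$ (up to negligible error) and your upper-bound argument are essentially the same as the paper's Proposition~\ref{momentsprop2}, and are fine.

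The gap is in the lower bound. Your plan is to split $\sin^2(\pi m(b-a)) = \tfrac{1}{2}(1-\cos(2\pi m(b-a)))$ and then show the cosine sum is $o(\log M)$ using the hypothesis. But the hypothesis $\inf_{q \ne 0} |q|\cdot\|q\alpha - k(b-a)\|>0$ is an \emph{inhomogeneous} condition on how well $q\alpha$ approximates the fixed targets $k(b-a)$; it says nothing directly about the distribution of $m(b-a) \pmod 1$ at the integers $m$ where $\|m\alpha\|$ is small. In particular, it does not preclude $\|q_\ell(b-a)\|$ being small along the convergent denominators $q_\ell$ of $\alpha$, which is exactly what would make your cosine sum large. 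There is no visible mechanism by which the stated hypothesis feeds into your proposed estimate. Your explanation of the set $\{1,3,5,7,9\}$ --- that higher odd harmonics of $b-a$ arise from ``iteratively reducing off-diagonal contributions'' --- is also incorrect: the second-moment expansion is a single square, with no iteration producing five harmonics.

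The paper takes a completely different route for the lower bound: it does not try to control the full cosine sum at all. Instead it applies a quantitative Kronecker theorem due to Cassels (Lemma~\ref{casselslemma}) with $n=2$, $(\alpha_1,\alpha_2)=(\alpha,b-a)$, $(\beta_1,\beta_2)=(0,1/2)$, $C_1 \asymp 1/Q$, $C_2=0.49$, to produce, for every large $Q$, an integer $|q|\asymp Q$ with $\|q\alpha\|\ll 1/Q$ and $\|q(b-a)\|\ge 1/100$. Each such $q$ contributes $\gg 1$ to the sum, and varying $Q$ geometrically yields $\gg \log M$ of them. The set $\{1,3,5,7,9\}$ is purely an artifact of the constants in Cassels' lemma: with $n=2$ the prefactor is $2^2/(3!)^2=1/9$, and with $C_2=0.49$ the Cassels hypothesis $\|m_2/2\|\le \tfrac{1}{9}\max\{\dots,0.49|m_2|,\dots\}$ is automatic whenever $m_2$ is even or $|m_2|\ge 11$; only odd $|m_2|\in\{1,3,5,7,9\}$ require the inhomogeneous Diophantine input $\|m_1\alpha+m_2(b-a)\|\gg 1/|m_1|$.
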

\noindent Note that some condition on the length $(b-a)$ is necessary. Indeed, by a classical result on bounded remainder intervals \cite{KE}, we have $|S_N(\alpha, f)| \ll 1$ if and only if $b-a \in \mathbb{Z} \alpha + \mathbb{Z}$; in this case $B_M \ll 1$.

In fact, we prove much more: we find the expected value and the variance of $S_N(\alpha, f)$ for all $1$-periodic functions of bounded variation, see Proposition \ref{momentsprop2}. As we will see, the upper bound $B_M^2 \ll \log M$ remains true for all $f$ and badly approximable $\alpha$. In contrast, the lower bound for $B_M^2$ depends on a delicate interplay between the Fourier coefficients of $f$ and the Diophantine properties of $\alpha$. For certain simple quadratic irrationals we also prove a central limit theorem under the assumption that $B_M \to \infty$ fast enough.
\begin{thm}\label{BVCLTtheorem} Let $a \ge 1$ be an integer, and consider the quadratic irrational
\[ \alpha = [0;a,a,a,\ldots] = \frac{\sqrt{a^2+4}-a}{2} . \]
Let $f: \mathbb{R} \to \mathbb{R}$ be a $1$-periodic function which is of bounded variation on $[0,1]$ and satisfies $\int_0^1 f(x) \, \mathrm{d}x=0$, and let $V(f)$ denote its total variation on $[0,1]$. Let $A_M$, $B_M$ be as in \eqref{AMBM}. For any $M \ge 3$ and $t \in \mathbb{R}$,
\[ \frac{1}{M} \left| \left\{ 1 \le N \le M \, : \, \frac{S_N (\alpha, f)-A_M}{B_M} \le t \right\} \right| = \int_{-\infty}^{t} \frac{e^{-x^2/2}}{\sqrt{2 \pi}} \, \mathrm{d} x + O \left( \frac{V(f) (\log M)^{1/3} (\log \log M)^2}{B_M} \right) \]
with an implied constant depending only on $\alpha$.
\end{thm}
\noindent We conjecture that the same holds for all quadratic irrationals, perhaps for even more general $\alpha$, and that the sharp rate of convergence is $O(V(f)/B_M)$. In particular, we expect a central limit theorem to hold as soon as $B_M \to \infty$.

It would also be interesting to estimate higher moments of $\log P_N (\alpha)$ and $S_N (\alpha ,f)$. Such results could improve the error term $O(t^{-2})$ in Theorem \ref{badlyapproxtheorem}, and possibly the rate of convergence in Theorems \ref{CLTtheorem}, \ref{Becktheorem}, \ref{2dimCLTtheorem} and \ref{BVCLTtheorem}.

In \cite{AB,AB2} we studied the asymptotics of $\sum_{N=1}^M P_N (\alpha )^t$ with $t>0$. The original motivation came from the fact that at $t=2$ this sum is related to quantum modular forms in number theory \cite{ZA} and to quantum invariants of hyperbolic knots in algebraic topology \cite{BD}. From a probabilistic point of view the same sum corresponds to the moment-generating function of $\log P_N (\alpha )$ at the point $t$, which is closely related to temporal large deviations. It would be interesting to see if the methods recently developed for the Sudler product can be adapted to more general Birkhoff sums $S_N (\alpha ,f)$.

The rest of the paper is organized as follows. In Sections \ref{esection} and \ref{almosteverysection} we discuss non-badly approximable irrationals such as Euler's number $e$ and almost every $\alpha$. In Section \ref{diophantineproductsection} we consider the Diophantine product $\prod_{n=1}^N \| n \alpha \|$, and show that its behavior can be reduced to that of $P_N(\alpha)$. In Section \ref{sudlerdistribution} we prove our results on Sudler products: Theorem \ref{badlyapproxtheorem} and formulas \eqref{expectedbadlyapprox}--\eqref{variancequadratic} are proved in Section \ref{badlysection}, Theorems \ref{CLTtheorem} and \ref{2dimCLTtheorem} are proved in Section \ref{CLTsection}, the results on Euler's number $e$ are proved in Section \ref{nonbadlysection}, and the results on almost every $\alpha$ are proved in Sections \ref{aeasymptoticssection} and \ref{measuresection}. In Section \ref{birkhoffdistribution} we prove results on Birkhoff sums with functions of bounded variation, including Theorems \ref{BUtheorem} and \ref{BVCLTtheorem}.

\section{Further results}

\subsection{Euler's number}\label{esection}

Our methods generalize to other types of irrationals. As an illustration, consider Euler's number $e$. It has been known since Euler himself that the continued fraction of $e$ is
\[ e=[2;1,2,1,1,4,1, \dots, 1,2n,1,\dots ] . \]
In particular, $e$ is only logarithmic factors away from being badly approximable in the sense that $\inf_{q \ge 3} \frac{q \log q}{\log \log q} \| q e\| >0$. We have a concentration result similar to the case of badly approximable irrationals.
\begin{thm}\label{etheorem} For any $M \ge 1$ and $t>0$,
\[ \frac{1}{M} \left| \left\{ 1 \le N \le M \, : \, \exp \bigg( - t \left( \frac{\log 3N}{\log \log 3N} \right)^{\frac{3}{2}} \bigg) \le P_N (e) \le \exp \left( t \left( \frac{\log 3N}{\log \log 3N} \right)^{\frac{3}{2}} \right) \right\} \right| = 1-O \left( t^{-2} \right) \]
with a universal implied constant. In particular, for any sequence $t_N \to \infty$ the set
\[ \bigg\{ N \in \mathbb{N} \, : \, \exp \bigg( - t_N \left( \frac{\log N}{\log \log N} \right)^{\frac{3}{2}} \bigg) \le P_N (e) \le \exp \bigg( t_N \left( \frac{\log N}{\log \log N} \right)^{\frac{3}{2}} \bigg) \bigg\} \]
has asymptotic density $1$.
\end{thm}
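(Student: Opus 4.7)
The strategy mirrors the proof of Theorem~\ref{badlyapproxtheorem}: compute the first two moments of $\log P_N(e)$ as $N$ ranges uniformly over $[1,M]$, and deduce the concentration statement from Chebyshev's inequality. The novelty is the careful quantitative tracking of the logarithmic loss arising from the (mildly) unbounded partial quotients of $e$, namely $a_{3k}=2k$ and $a_j=1$ otherwise.

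First I would substitute the Fourier series \eqref{fourierseries} to write $\log P_N(e) = -\sum_{m=1}^\infty m^{-1} T_m(N)$ with $T_m(N) := \sum_{n=1}^N \cos(2\pi m n e)$. The standard bound $|T_m(N)| \le \min\{N,\,(2\|me\|)^{-1}\}$, combined with the Diophantine property $\|me\| \gg (\log\log m)/(m\log m)$, yields $|T_m(N)| \ll \min\{N,\, m(\log m)/\log\log m\}$; this lets me truncate the Fourier sum with a controlled tail and reduce the two moment computations to a finite double sum.

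For the first moment $A_M := M^{-1}\sum_{N=1}^M \log P_N(e)$ I swap the sums over $N$ and $n$ via $\sum_{N=1}^M \sum_{n=1}^N = \sum_{n=1}^M (M-n+1)$, and then apply Abel summation together with the above bound on $T_m$; this should yield $A_M = \tfrac12\log M + O((\log M/\log\log M)^{3/2})$, the main term coming from the logarithmic singularity of $\log|2\sin \pi x|$ at $0$ and the near-resonance contributions at convergents of $e$, exactly as in the badly approximable case. For the variance $V_M := M^{-1}\sum_{N=1}^M (\log P_N(e)-A_M)^2$, I expand $(\log P_N(e))^2$ via \eqref{fourierseries}, apply $\cos A \cos B = \tfrac12 (\cos(A-B)+\cos(A+B))$, and sum over $N$; this expresses $V_M$ as a double sum over $m_1,m_2$ whose summands are controlled by $\|(m_1\pm m_2)e\|^{-1}$. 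Inserting the Diophantine bound once more and performing a dyadic decomposition in $m_1,m_2$, I expect $V_M \ll (\log M/\log\log M)^3$.

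With these two moment estimates, Chebyshev's inequality immediately gives $|\log P_N(e)| \le t(\log M/\log\log M)^{3/2}$ outside a set of $N\in[1,M]$ of density $O(t^{-2})$, since both $|A_M|$ and $\sqrt{V_M}$ are absorbed into the right-hand side. A standard dyadic decomposition in $M$ converts the $\log M$ inside the bracket into $\log 3N$ and produces the density-one consequence for any $t_N\to\infty$. The main obstacle will be the variance upper bound: one must verify that, despite the slowly growing partial quotients $a_{3k}=2k$, the cumulative effect over roughly $\log M/\log\log M$ scales of the Ostrowski expansion of $e$ does not exceed $(\log M/\log\log M)^3$, and in particular that the oscillatory cross terms between different Fourier modes do not dominate the diagonal ones.
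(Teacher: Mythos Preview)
Your plan is correct and follows essentially the same route as the paper: compute the first two moments of $\log P_N(e)$ via the Fourier expansion \eqref{fourierseries}, control diagonal and off-diagonal contributions by Diophantine sums in $\|m e\|$, and conclude by Chebyshev. The paper organizes the moment computation into a general result (Proposition~\ref{momentsprop}) valid for any $\alpha$ with $a_k\ll k^d$, reducing the variance to $\sum_{m\le M}(m\|m\alpha\|)^{-2}\asymp a_1^2+\cdots+a_k^2$ (formula~\eqref{diophantinesumpartialquotients}), which for $e$ gives $\asymp k^3\asymp(\log M/\log\log M)^3$; your ``main obstacle'' --- the off-diagonal cross terms --- is exactly the fifth estimate of Lemma~\ref{diophantinelemma}.
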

\noindent What we actually prove is that
\begin{equation}\label{eexpected}
\left| \frac{1}{M} \sum_{N=1}^M \log P_N (e) \right| \ll \log M
\end{equation}
and
\begin{equation}\label{evariance}
\frac{1}{M} \sum_{N=1}^M (\log P_N (e))^2 = \frac{\pi^2}{540} \left( \frac{\log M}{\log \log M} \right)^3 \bigg( 1+O \left( \frac{\log \log \log M}{\log \log M} \right) \bigg) .
\end{equation}
Observe that now the expected value is negligible compared to the square root of the variance, which explains why $P_N (e)$ oscillates around $1$ instead of $N^{1/2}$. Since the continued fraction satisfies the Lindeberg-type condition $a_k/(a_1^2+\cdots +a_k^2)^{1/2} \to 0$ of Beck, we conjecture that $\log P_N (e)$ satisfies the central limit theorem.
\begin{conj} For any $t \in \mathbb{R}$,
\[ \frac{1}{M} \Big| \Big\{ 1 \le N \le M \, : \, \frac{\log P_N (e)}{\frac{\pi}{\sqrt{540}} \left( \frac{\log N}{\log \log N} \right)^{3/2}} \le t \Big\} \Big| \to \int_{-\infty}^{t} \frac{e^{-x^2/2}}{\sqrt{2 \pi}} \, \mathrm{d}x \quad \textrm{as } M \to \infty . \]
\end{conj}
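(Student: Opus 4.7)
The plan is to extend the Ostrowski-expansion plus Fourier-series framework used throughout this paper — and in Beck's proof of Theorem~\ref{Becktheorem} — to handle the slowly growing partial quotients of $e$. First I would substitute \eqref{fourierseries} to write
\[ \log P_N(e) = -\sum_{m \ge 1} \frac{1}{m} \sum_{n=1}^{N} \cos(2\pi m n e), \]
truncate the outer sum at frequency $\asymp q_K^{1+\varepsilon}$ (where $p_K/q_K$ is the largest convergent of $e$ with $q_K \le M$), and split the inner Birkhoff sum along the Ostrowski representation $N = \sum_{k=0}^{K} b_k(N) q_k$, $0 \le b_k \le a_{k+1}$. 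The frequency--scale pairing $m \asymp q_k$, $\|me\| \asymp 1/(a_{k+1}q_k)$, combined with $\sum_n \cos(2\pi m n e) \ll \min(N,1/\|me\|)$, yields a decomposition $\log P_N(e) = \sum_{k=0}^{K} X_k(N) + E(N)$, where $X_k$ is essentially a quadratic polynomial in $b_k$ of amplitude $\asymp a_{k+1}$, with only short-range coupling to neighboring digits, and $E$ is a manageable boundary term.

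The probabilistic engine is that when $N$ is drawn uniformly from $[1,M]$, the digit vector $(b_0(N),\ldots,b_K(N))$ is close in total variation to a product of uniform distributions on $\{0,\ldots,a_{k+1}\}$, with error coming from standard Weyl-type estimates for $q_k e$. Granted this approximate independence, $\sum_k X_k(N)$ behaves as a sum of independent random variables with $\mathrm{Var}(X_k) \asymp a_{k+1}^2$; summing over the $\sim K/3$ levels where $a_{k+1} \approx 2(k+1)/3$ recovers the leading variance $(\pi^2/540)(\log M/\log\log M)^3$ of \eqref{evariance}. The CLT would then follow from Lindeberg--Feller, since $a_k/\sqrt{a_1^2+\cdots+a_k^2} \ll k^{-1/2}$ prevents any single scale from dominating. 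Centering at zero rather than at $A_M$ is justified by \eqref{eexpected}, which gives $A_M \ll \log M = o(B_M)$.

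The main obstacle will be rendering the ``approximate independence'' of Ostrowski digits sharp enough to support a CLT at the delicate scale $B_M = \tfrac{\pi}{\sqrt{540}}(\log N/\log\log N)^{3/2}$: the digits live on alphabets of growing size, and the digit process is not stationary under renormalization (as it is for a quadratic $\alpha$, where Beck exploits exact self-similarity). The most direct route is via characteristic functions, estimating $\mathbb{E}[e^{it \sum_k X_k/B_M}]$ and factoring it as $\prod_k \mathbb{E}[e^{it X_k/B_M}] \cdot (1+o(1))$, with per-level factorization errors bounded by something summable, exploiting the rapid growth $q_{k+1} \asymp a_{k+1} q_k$ to decouple the characters $e^{2\pi i m n e}$ attached to different scales. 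A related sticking point is that for unbounded $a_{k+1}$ one needs finer control on the marginal distribution of $b_k$ than suffices for the variance computation \eqref{evariance}, since truncation errors that are harmless at the level of second moments can still shift the scaled distribution by an $O(1)$ amount. I expect these technicalities — rather than the overall strategy, which is essentially dictated by Beck's template — to be the crux of the proof.
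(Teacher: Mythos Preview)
The statement you are attempting to prove is labelled \emph{Conjecture} in the paper, and the paper offers no proof of it. The only justification given is heuristic: the continued fraction of $e$ satisfies Beck's Lindeberg-type condition $a_k/\sqrt{a_1^2+\cdots+a_k^2}\to 0$, and the paper notes that Beck announced (but never published a detailed proof of) a CLT for $S_N(\alpha)$ under this condition. So there is no paper proof to compare your proposal against.

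That said, your outline is exactly the template the paper's heuristic points to, and you have correctly identified where the genuine difficulties lie. The decomposition $\log P_N(e)=\sum_k X_k(N)+E(N)$ along Ostrowski digits, the variance bookkeeping recovering \eqref{evariance}, and the Lindeberg--Feller check are all the natural moves. The real obstruction --- as you say --- is that for $e$ the Ostrowski digit process is neither stationary nor on a fixed alphabet, so the self-similarity/renormalization machinery Beck uses for quadratic irrationals (and which underlies the proof of Theorem~\ref{CLTtheorem} via his ``Fourier series approach'') does not transfer directly. Your proposed route through characteristic functions and scale-by-scale factorization is plausible, but making the $(1+o(1))$ decoupling rigorous with summable errors across $\sim K$ levels of growing alphabet size is precisely the step that remains open; this is not a gap in your write-up so much as the content of the conjecture itself.
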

\noindent From general results in \cite{AB} it follows that once again the extreme values are of larger order of magnitude than the square root of the variance:
\begin{equation}\label{eextremal}
\begin{split} \max_{1 \le N \le M} \log P_N (e) &= V \left( \frac{\log M}{\log \log M} \right)^2 \bigg( 1+O \left( \frac{\log \log \log M}{\log \log M} \right) \bigg) , \\ \min_{1 \le N \le M} \log P_N (e) &= -V \left( \frac{\log M}{\log \log M} \right)^2 \bigg( 1+O \left( \frac{\log \log \log M}{\log \log M} \right) \bigg) \end{split}
\end{equation}
with
\begin{equation}\label{V}
V= \int_0^{5/6} \log |2 \sin (\pi x)| \, \mathrm{d}x \approx 0.1615.
\end{equation}
The constant $V$ originally appeared in asymptotic results on Sudler products in \cite{AB,BD} via a connection to algebraic topology: $4 \pi V$ is the hyperbolic volume of the complement of the figure-eight knot. A more elementary, arithmetic interpretation of $V$ was given in \cite{AB2}.

Euler's number is of course just an illustration, and the results of this section apply (with suitable constant factors) to any irrational whose partial quotients are given by a linear pseudopolynomial; that is, there exist finitely many polynomials $p_1 (x), \dots, p_m (x)$ of degree at most $1$, with at least one of them of degree equal to $1$, and a positive integer $k_0$ such that $a_k=p_{k \pmod{m}}(k)$ for all $k \ge k_0$. Such irrationals include an infinite family of numbers related to $e$.

\subsection{Almost every irrational}\label{almosteverysection}

It is an unfortunate fact that the only ``concrete'' irrationals whose continued fraction expansion is explicitly known are the quadratic irrationals and the infinite family of irrationals related to $e$ mentioned in the previous section. Very little is known about the continued fraction of other classical irrationals such as $\pi$, $\sqrt[3]{2}$ or $\log 2$, although numerical experiments suggest they exhibit random behavior. We do know, however, the precise statistics of the partial quotients $a_k$ of almost every real $\alpha$ in the sense of the Lebesgue measure.

The extreme values of $\log P_N (\alpha )$ for a.e.\ $\alpha$ are as follows. Let $\varphi (x)$ be a nondecreasing, positive real-valued function on $(0,\infty )$. If $\sum_{k=1}^{\infty} 1/\varphi (k) < \infty$, then for a.e.\ $\alpha$,
\begin{equation}\label{aeupperbound}
|\log P_N (\alpha ) | \le \varphi (\log N) + O (\log N \log \log N)
\end{equation}
with an implied constant depending only on $\alpha$ and $\varphi$. If $\sum_{k=1}^{\infty} 1/\varphi (k) = \infty$, then for a.e.\ $\alpha$,
\begin{equation}\label{aelowerbound}
\log P_N(\alpha ) \ge \varphi (\log N) \quad \textrm{and} \quad \log P_N(\alpha ) \le - \varphi (\log N) \quad \textrm{for infinitely many } N.
\end{equation}
Thus e.g.\ for any $\varepsilon >0$, for a.e.\ $\alpha$ there exists a constant $C=C(\alpha, \varepsilon)>0$ such that $|\log P_N (\alpha )| \le C \log N (\log \log N)^{1+\varepsilon}$, but this fails with $\varepsilon =0$. Such a convergence/divergence criterion was first proved by Lubinsky \cite{LU2}, who in fact explicitly constructed an exponentially increasing sequence of $N$'s (each having a single nonzero digit in its Ostrowski expansion) satisfying the second inequality in \eqref{aelowerbound}. For the sake of completeness, we explain in Section \ref{nonbadlysection} how to derive \eqref{aeupperbound} and \eqref{aelowerbound} from general results in \cite{AB,BD}. In this paper we improve the lower bound \eqref{aelowerbound}: we show that both inequalities hold on a set of positive upper asymptotic density.
\begin{thm}\label{aeupperdensitytheorem} Let $\varphi (x)$ be a nondecreasing, positive real-valued function on $(0,\infty )$. If $\sum_{k=1}^{\infty} 1/\varphi (k) = \infty$, then for a.e.\ $\alpha$ the sets
\[ \left\{ N \in \mathbb{N} \, : \, \log P_N (\alpha ) \ge \varphi (\log N) \right\} \]
and
\[ \left\{ N \in \mathbb{N} \, : \, \log P_N (\alpha ) \le - \varphi (\log N) \right\} \]
have upper asymptotic density at least $\pi^2/(1440 V^2) \approx 0.2627$, where $V$ is as in \eqref{V}.
\end{thm}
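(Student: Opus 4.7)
The plan is a second-moment method applied within a single dominant block. I choose scales $M_j = q_{k_j+1}$ along which the partial quotient $a_{k_j+1}$ is anomalously large, compute $\frac{1}{M_j}\sum_{N \le M_j}(\log P_N(\alpha))^2$ via Parseval's identity, and combine the result with the sharp pointwise bound $\max_{N \le M_j}|\log P_N(\alpha)| \le V a_{k_j+1}(1+o(1))$ coming from \eqref{eextremal}-type results in \cite{AB}. Cauchy--Schwarz then converts the ratio of these two quantities into a density lower bound, and the constant $\pi^2/(1440V^2)$ emerges naturally.

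First, I select the scales. Under $\sum_k 1/\varphi(k) = \infty$, the divergence part of the Borel--Cantelli lemma for partial quotients (using $\mathbb{P}_{\mathrm{Gauss}}(a_{k+1} \ge A) \asymp 1/A$ together with the mixing of the Gauss map), combined with standard a.e.\ bounds on $\sum_{\ell \le k} a_{\ell+1}$ and L\'evy's theorem $\log q_k \asymp k$, produces a subsequence $k_j \to \infty$ along which $a_{k_j+1}/\varphi(\log q_{k_j+1}) \to \infty$ and $a_{k_j+1}$ dwarfs $\sum_{\ell \le k_j} a_{\ell+1}$. Set $M_j := q_{k_j+1}$; then $M_j = a_{k_j+1}q_{k_j}(1+o(1))$, so the block $(q_{k_j}, M_j]$ fills almost all of $[1, M_j]$.

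Within this dominant block, the three-distance theorem together with the cyclotomic identity $\prod_{s=0}^{q-1}|2\sin(\pi(\beta+s/q))|=|2\sin(\pi q\beta)|$ yields, along the lines of the extremal analysis in \cite{AB,BD}, the approximation
\[ \log P_N(\alpha) = a_{k_j+1}\, F\!\left(\tfrac{J}{a_{k_j+1}}\right) + o(a_{k_j+1}) \qquad (j\to\infty), \]
uniformly in $N = Jq_{k_j}+r$ with $1 \le J \le a_{k_j+1}$, $0 \le r < q_{k_j}$, where $F(\beta):=\int_0^\beta\log|2\sin(\pi u)|\,\mathrm{d}u$ satisfies $\|F\|_\infty = V$ (attained at $\beta = 5/6$) and the antisymmetry $F(1-\beta) = -F(\beta)$. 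From the Fourier expansion $F(\beta) = -\frac{1}{2\pi}\sum_{m\ge 1}\sin(2\pi m\beta)/m^2$, Parseval gives $\int_0^1 F(\beta)^2\,\mathrm{d}\beta = \frac{1}{8\pi^2}\sum_{m\ge 1} m^{-4} = \pi^2/720$. A Riemann sum then yields
\[ \frac{1}{M_j}\sum_{N=1}^{M_j}(\log P_N(\alpha))^2 = (1+o(1))\,\frac{\pi^2}{720}\,a_{k_j+1}^{\,2}, \]
the initial range $N \le q_{k_j}$ contributing negligibly because $|\log P_N(\alpha)| \ll \sum_{\ell \le k_j} a_{\ell+1} = o(a_{k_j+1})$ there. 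By antisymmetry of $F$ the restriction to $\{N : \log P_N(\alpha) \ge 0\}$ captures half of this mass, namely $(1-o(1))\pi^2 a_{k_j+1}^{\,2}/1440$.

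Cauchy--Schwarz against the matching pointwise upper bound then gives
\[ V^2 a_{k_j+1}^{\,2}(1+o(1)) \cdot \frac{|\{N \le M_j : \log P_N(\alpha) \ge 0\}|}{M_j} \;\ge\; \frac{\pi^2}{1440}\,a_{k_j+1}^{\,2}(1-o(1)), \]
so the upper density of $\{N : \log P_N(\alpha) \ge 0\}$ is at least $\pi^2/(1440 V^2)$. Since $F$ crosses zero linearly with slope $\log 2$ at $\beta = 1/2$, raising the threshold from $0$ to $\varphi(\log N)$ removes only an $O(\varphi(\log M_j)/(V a_{k_j+1})) = o(1)$ fraction of $\beta$-values, so the bound survives; the proof for $\{\log P_N(\alpha) \le -\varphi(\log N)\}$ is identical after replacing $F$ by $-F$. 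The main obstacle is establishing the block approximation uniformly: where $r\alpha$ or $J\|q_{k_j}\alpha\|$ approaches an integer the pointwise error diverges, and these singular indices must be shown harmless both in $L^2$ and in the pointwise maximum. This is the same analytic difficulty as in the extremal asymptotics of \cite{AB,BD}, and the same three-distance-plus-Koksma machinery there should yield the required uniform control, while the choice of subsequence makes the error terms negligible against the scale $a_{k_j+1}$.
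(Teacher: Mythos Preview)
Your route differs genuinely from the paper's. The paper never establishes the pointwise block approximation $\log P_N(\alpha)\approx a_{k_j+1}F(J/a_{k_j+1})$; it computes the second moment via the Fourier-analytic formula of Proposition~\ref{momentsprop} together with \eqref{diophantinesumpartialquotients}. The point is that the error in Proposition~\ref{momentsprop} depends only on the \emph{local} partial quotients $\max_{|\ell-k|\ll\log k}a_\ell$, so the paper picks $k\in[k_0,2k_0]$ inside a ``quiet'' subinterval where every nearby $a_\ell$ is at most $\log^4 k_0$ (such an interval must exist by \eqref{DVtrimmed}), while the spike $a_{k_0}$ still dominates the global Diophantine sum $\sum_{m<q_k}(m\|m\alpha\|)^{-2}$. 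Your choice $M_j=q_{k_j+1}$ would make that error term of size $a_{k_j+1}^2(\log\log M)^4$, swamping the main term --- this is precisely why you need the direct block approximation instead. The paper also handles the threshold differently: rather than arguing that $\{N:0\le\log P_N<\varphi(\log N)\}$ has small density, it proves the bound for the threshold $\varepsilon T_k$ (losing $\varepsilon^2/2$), then absorbs the constants by replacing $\varphi$ with $\varphi^*(x)=(100/\varepsilon)\varphi(100x)$, which still has divergent harmonic sum.

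That said, your block approximation is a real gap, not a detail. The results you cite from \cite{AB,BD} yield only the \emph{extremal} asymptotics \eqref{maxpn}--\eqref{maxpn2}, not the uniform pointwise formula you need, and you acknowledge that the pointwise error blows up near singular $r$ and $J$. Both your $L^2$ computation and your threshold-crossing step require control over these exceptional indices (the latter, in particular, needs the $o(a_{k_j+1})$ error to hold uniformly, or at least outside a set of density $o(1)$, since a merely-$L^2$-small error could still move a positive fraction of $N$ across the threshold). ``The same machinery should yield'' is not a proof. If you can establish the approximation with $o(a_{k_j+1})$ error uniformly off a set of density $o(1)$ and with $L^2$ control on that set, your argument completes and is in some ways more illuminating than the paper's --- it explains $\pi^2/720=\int_0^1 F^2$ directly --- but as it stands the proof is incomplete at exactly the point you flag.
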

\noindent What we actually prove is that for a.e.\ $\alpha$ there exists a sequence $\psi_{\alpha}(M) \to 0$ such that
\begin{equation}\label{aevariance}
\sqrt{\frac{1}{M} \sum_{N=1}^M (\log P_N (\alpha ))^2} = \left( \frac{\pi}{\sqrt{720}V} +\psi_{\alpha}(M) \right) \max_{1 \le N \le M} |\log P_N (\alpha )| \quad \textrm{for infinitely many } M.
\end{equation}
This means that the typical and the extreme oscillations of $\log P_N (\alpha)$ are of the same order of magnitude; we call it an anticoncentration result. We believe that the value $\pi/(\sqrt{720}V)$ is sharp, i.e.\ for a.e.\ $\alpha$,
\[ \limsup_{M \to \infty} \sqrt{\frac{1}{M} \sum_{N=1}^M (\log P_N (\alpha ))^2} \left( \max_{1 \le N \le M} |\log P_N (\alpha )| \right)^{-1} = \frac{\pi}{\sqrt{720}V} , \]
but this remains open. On the other hand, $\pi^2/(1440 V^2)$ in Theorem \ref{aeupperdensitytheorem} is likely not optimal; perhaps $\{ N \in \mathbb{N} \, : \, |\log P_N (\alpha )| \ge \varphi (\log N) \}$ even has upper asymptotic density $1$. Note that \eqref{aevariance} immediately implies that $\log P_N (\alpha )$ does not satisfy the central limit theorem (with any centering and scaling) for a.e.\ $\alpha$. See Dolgopyat and Sarig \cite{DS1,DS2} for why the central limit theorem fails also for $S_N (\alpha ,f )$ with certain functions of bounded variation for a.e.\ $\alpha$.

The convergence/divergence criterion \eqref{aeupperbound}, \eqref{aelowerbound} can be restated as follows. Under a suitable regularity condition on $\varphi (x)$ in the case of convergence (e.g.\ $\varphi (x) /(x \log x) \to \infty$), for a.e.\ $\alpha$,
\[ \limsup_{M \to \infty} \frac{\displaystyle \max_{1 \le N \le M} \log P_N (\alpha)}{\varphi (\log M)} = \left\{ \begin{array}{ll} 0 & \textrm{if } \sum_{k=1}^{\infty} 1/\varphi (k) < \infty , \\ \infty & \textrm{if } \sum_{k=1}^{\infty} 1/\varphi (k) = \infty , \end{array} \right. \]
and the same holds with $\max$ replaced by $-\min$. Our next result establishes the liminf behavior of the same sequences.
\begin{thm}\label{aeliminftheorem} For a.e.\ $\alpha$,
\[ \liminf_{M \to \infty} \frac{\displaystyle \max_{1 \le N \le M} \log P_N (\alpha)}{\log M \log \log M} = \frac{12 V}{\pi^2} . \]
The same holds with $\max$ replaced by $-\min$.
\end{thm}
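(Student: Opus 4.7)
The plan is to combine an extreme-value formula for $\log P_N(\alpha)$ in terms of the partial quotients of $\alpha$, drawn from \cite{AB,BD}, with two classical metric Diophantine theorems. The key input from the earlier work is the asymptotic
\[
\max_{1 \le N \le M}\log P_N(\alpha) = V\sum_{\ell=1}^{K+1}a_\ell + o\Bigl(\sum_{\ell=1}^{K+1}a_\ell\Bigr),
\]
with the same relation governing $-\min_{N\le M}\log P_N(\alpha)$, where $q_K \le M < q_{K+1}$ and the $o(\cdot)$ is valid along the set of $K$ on which no single partial quotient dominates (that is, $\max_{\ell\le K+1}a_\ell = o(\sum_{\ell\le K+1}a_\ell)$). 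Heuristically, the optimizing Ostrowski-digit expansion of $N$ arranges the summands $\log|2\sin(\pi n\alpha)|$ into Riemann sums approximating the Clausen integral $V=\int_0^{5/6}\log|2\sin(\pi x)|\,dx$ within each block of size $a_\ell$; the complementary length-$5/6$ interval produces the identical asymptotic for the minimum by virtue of $\int_0^1\log|2\sin(\pi x)|\,dx = 0$.

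Next, by L\'evy's theorem, $(\log q_K)/K \to \pi^2/(12\log 2)$ for a.e.\ $\alpha$, hence $K \sim (12\log 2/\pi^2)\log M$ and $\log K \sim \log\log M$. By the Diamond--Vaaler trimmed strong law, for a.e.\ $\alpha$ we have
\[
\sum_{\ell=1}^{K}a_\ell - \max_{1\le\ell\le K}a_\ell \sim \frac{K\log K}{\log 2},
\]
and combining this with the trivial bound $\sum a_\ell \ge \max a_\ell$ and the a.s.\ Fr\'echet-type typicality $\max_{\ell\le K}a_\ell = O(K) = o(K\log K)$ along a density-one set of $K$, we obtain $\liminf_{K\to\infty} \sum_{\ell\le K} a_\ell/(K\log K) = 1/\log 2$ almost surely. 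Composing the three ingredients,
\[
\liminf_{M\to\infty}\frac{\max_{N\le M}\log P_N(\alpha)}{\log M\log\log M} = V\cdot\frac{12\log 2}{\pi^2}\cdot\frac{1}{\log 2} = \frac{12V}{\pi^2}\quad\textrm{for a.e.\ }\alpha,
\]
with the identical identity for $-\min$.

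The main obstacle is isolating the extreme-value asymptotic in the clean form above: the results in \cite{AB,BD} are stated for individual $\alpha$, and the constant $V$ arises from the Riemann-sum approximation only when the partial quotients are sufficiently large. For bounded partial quotients (e.g.\ the golden ratio, with $c(\alpha)=1$) the leading constant is different from $V$, so one must verify that in the metric setting the relevant blocks are large enough almost surely to vindicate $V$. A secondary technical issue is the lower-bound direction of the liminf along every $M$ (not merely the subsequence on which no partial quotient dominates): this is handled by explicitly exhibiting, even when a single atypically large $a_\ell$ is present, a sub-Ostrowski expansion of some $N \le M$ which already captures $V\sum_{\ell \le K+1} a_\ell(1-o(1))$ using only the generic bulk of the partial quotients.
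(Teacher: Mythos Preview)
Your lower-bound half ($\liminf \ge 12V/\pi^2$) is correct and coincides with the paper's argument: bound $\max_{N\le M}\log P_N$ below by $\max_{N<q_K}\log P_N \sim V\sum_{\ell\le K}a_\ell$ via \eqref{maxpn2}, then invoke the Diamond--Vaaler lower estimate $\sum_{\ell\le K}a_\ell \ge (1+o(1))\,K\log K/\log 2$ (which holds for \emph{every} $K$) together with L\'evy's theorem. No sub-Ostrowski construction is needed here; \eqref{maxpn2} already supplies the requisite $N$.

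The upper-bound half has a genuine gap. Your assertion that $A_K := \max_{\ell\le K}a_\ell$ satisfies $A_K = O(K) = o(K\log K)$ along a density-one set of $K$ is false for a.e.\ $\alpha$. By Borel--Bernstein there are infinitely many $k_0$ with $a_{k_0} \ge k_0\log k_0\log\log k_0$; after such a $k_0$ one has $A_K \ge a_{k_0} > \varepsilon K\log K$ for every $K$ in $[k_0,\,k_0\log\log k_0/\varepsilon]$, an interval occupying relative length $1-O(\varepsilon/\log\log k_0)$ of $[1,k_0\log\log k_0/\varepsilon]$. Hence $\{K : A_K \le \varepsilon K\log K\}$ has lower density zero a.s.\ for every $\varepsilon>0$, and the stronger $O(K)$ claim fails all the more. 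What you actually need --- and what is true --- is merely that $A_K = o(K\log K)$ for \emph{infinitely many} $K$, i.e.\ $\liminf_K A_K/(K\log K)=0$ a.s. This follows from Khinchin's weak law $\sum_{\ell\le K}a_\ell/(K\log K)\to 1/\log 2$ in measure: extract a subsequence on which this converges a.e., and on that subsequence Diamond--Vaaler forces $A_K = o(K\log K)$.

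The paper performs exactly this convergence-in-measure/subsequence step, but one level up: it first establishes (Theorem~\ref{convergenceinmeasuretheorem}) that $\max_{N\le M}\log P_N/(\log M\log\log M)\to 12V/\pi^2$ in measure, and then notes that convergence in measure yields a.e.\ convergence along a sparse subsequence, giving $\liminf \le 12V/\pi^2$ immediately. Your route through $\sum a_\ell$ is equivalent in content and avoids appealing to Theorem~\ref{convergenceinmeasuretheorem}, but the missing ingredient is the same: a subsequence extracted from a convergence-in-measure statement, not a density-one claim.
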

\noindent It easily follows that for a.e.\ $\alpha$ the set
\[ \left\{ N \in \mathbb{N} \, : \, |\log P_N (\alpha )| \ge \left( \frac{12 V}{\pi^2} + \varepsilon \right) \log N \log \log N \right\}, \qquad \varepsilon >0 \]
has zero lower asymptotic density. In particular, in Theorem \ref{aeupperdensitytheorem} upper asymptotic density cannot be replaced by lower asymptotic density.

In addition to the a.e.\ asymptotics, we also find the extreme values and the variance of $\log P_N (\alpha )$ in a distributional sense. Since $P_N(\alpha )$ is $1$-periodic in the variable $\alpha$, we will choose $\alpha$ randomly from $[0,1]$. The main message of the next two results is that typically $\max_{1 \le N \le M} |\log P_N (\alpha )|$ is greater than $(M^{-1} \sum_{N=1}^M (\log P_N (\alpha ))^2)^{1/2}$ by a factor of $\log \log M$. In particular, in \eqref{aevariance} ``for infinitely many $M$'' cannot be replaced by ``for all $M$''.
\begin{thm}\label{convergenceinmeasuretheorem} We have
\[ \frac{\displaystyle \max_{1 \le N \le M} \log P_N (\alpha )}{\log M \log \log M} \to \frac{12V}{\pi^2} \qquad \textrm{in measure, as } M \to \infty . \]
The same holds with $\max$ replaced by $-\min$.
\end{thm}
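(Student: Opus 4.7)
The plan is to reduce $\max_{1 \le N \le M} \log P_N(\alpha)$ to a statistic of the continued fraction of $\alpha$ that concentrates in measure, and then to compute the concentration value under Lebesgue measure using Lévy's theorem together with the weak law of large numbers for heavy-tailed sums. Extending the pointwise analysis of $\log P_N(\alpha)$ already used to prove \eqref{aeupperbound} and Theorem~\ref{aeliminftheorem}, I would first establish that, up to an error of order $o(\log M \log\log M)$ in measure,
\[
\max_{1 \le N \le M} \log P_N(\alpha) = V \cdot \Phi_M(\alpha),
\]
where $K(M) := \max\{k : q_k \le M\}$ and $\Phi_M$ is a sum-like functional of the partial quotients $a_1, \dots, a_{K(M)}$ realised by the Ostrowski digits of a near-maximising $N \le M$. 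The same functional, with appropriate sign changes, controls $-\min_{1 \le N \le M}\log P_N(\alpha)$, so it suffices to treat the maximum.

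Next I would invoke two classical results about the continued fraction of a Lebesgue-random $\alpha$. Lévy's theorem gives $\log q_K/K \to \pi^2/(12\log 2)$ almost surely, and hence $K(M) \sim (12\log 2/\pi^2)\log M$ a.s.\ (and in measure). The Feller weak law for sums of i.i.d.\ heavy-tailed random variables, transferred to the $\psi$-mixing continued-fraction sequence via the Gauss--Kuzmin tail $\mathbb{P}(a_1 > t) = \log_2(1+1/t) \sim 1/(t\log 2)$, then yields
\[
\frac{\Phi_M(\alpha)}{K(M)\log K(M)} \to \frac{1}{\log 2} \quad \text{in measure as } M \to \infty.
\]
Crucially, this convergence is only in measure and not almost sure, because occasional very large partial quotients cause upward excursions of $\Phi_M$; this is precisely the distinction between Theorem~\ref{aeliminftheorem} (which states only an a.s.\ liminf) and Theorem~\ref{convergenceinmeasuretheorem} (which asserts in-measure convergence to the same value).

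Combining the two displays with the asymptotic $K(M)\log K(M) \sim (12\log 2/\pi^2)\log M \log\log M$ gives
\[
\frac{\max_{1 \le N \le M}\log P_N(\alpha)}{\log M \log\log M} \to V \cdot \frac{12\log 2}{\pi^2} \cdot \frac{1}{\log 2} = \frac{12V}{\pi^2} \quad \text{in measure},
\]
and the analogous argument applies verbatim to $-\min$.

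The main obstacle is the first step: establishing the approximation $\max \log P_N = V\Phi_M + o(\log M\log\log M)$ in measure. The upper bound is the easier half and follows by bounding $\log P_N(\alpha)$ via the Fourier expansion \eqref{fourierseries} combined with the Ostrowski expansion of $N$, in the same spirit as \eqref{aeupperbound}. The lower bound is more delicate: for each $M$ one must construct a specific $N \le M$ whose Ostrowski digits align with the bulk of the partial quotients $a_1, \dots, a_{K(M)}$ so as to realise their cumulative contribution, and then evaluate $\log P_N(\alpha)$ at this $N$ with an error small enough to justify in-measure convergence, as opposed to the weaker a.s.\ liminf statement sufficient for Theorem~\ref{aeliminftheorem}.
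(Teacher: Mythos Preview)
Your outline is correct and is essentially the paper's argument, but you have misjudged where the work lies. The reduction you flag as the ``main obstacle'' is not an obstacle: the identity
\[
\max_{1\le N\le M}\log P_N(\alpha)=(V+o(1))\bigl(a_1+\cdots+a_{k_M(\alpha)}+\xi_M(\alpha)\bigr),\qquad |\xi_M(\alpha)|\le a_{k_M(\alpha)+1},
\]
valid for a.e.\ $\alpha$, is already available as \eqref{maxpn2} (quoted from \cite{AB}), and the companion statement for $-\min$ follows from the reflection identity \eqref{symmetric}. No new Ostrowski construction is needed. From there the paper proceeds exactly as you propose: L\'evy's theorem gives $k_M(\alpha)\sim\frac{12\log 2}{\pi^2}\log M$, and a law of large numbers for $a_1+\cdots+a_k$ gives the constant $1/\log 2$. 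The one genuinely new step, which you do not isolate, is the in-measure estimate
\[
\frac{\max_{1\le\ell\le k_M(\alpha)+1}a_\ell}{\log M\log\log M}\to 0\qquad\textrm{in measure},
\]
needed to kill both the boundary term $\xi_M$ and (in the paper's version) the trimmed term; the paper handles this with a union bound over $\ell\le 100\log M$ using the Gauss--Kuzmin tail.

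The only methodological difference is that you invoke Khinchin's weak law $(a_1+\cdots+a_k)/(k\log k)\to 1/\log 2$ in measure directly, whereas the paper uses the Diamond--Vaaler a.e.\ trimmed-sum result \eqref{DVtrimmed} and then absorbs the removed maximum $\vartheta A_k$ into the same max-quotient estimate above. Your route is marginally cleaner for this theorem since only convergence in measure is needed; the paper's choice is natural because \eqref{DVtrimmed} is used elsewhere and the max-quotient bound is required anyway for $\xi_M$.
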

\noindent Convergence in measure follows with respect to any probability measure $\mu$ on $[0,1]$ that is absolutely continuous with respect to the Lebesgue measure. In contrast to the extreme values, the suitably normalized variance converges to the standard L\'evy distribution.
\begin{thm}\label{limitdistributiontheorem} We have
\[ \frac{1}{M} \sum_{N=1}^M \log P_N (\alpha ) = \frac{1}{2} \log M + o\left( \sqrt{\log M} (\log \log M)^2 \right) \qquad \textrm{in measure, as } M \to \infty . \]
Further, let $\mu$ be a probability measure on $[0,1]$ that is absolutely continuous with respect to the Lebesgue measure. For any $t \ge 0$,
\[ \mu \bigg( \bigg\{ \alpha \in [0,1] \, : \, \frac{10 \pi}{M (\log M)^2} \sum_{N=1}^M \left( \log P_N (\alpha ) - \frac{1}{2} \log M \right)^2 \le t \bigg\} \bigg) \to \int_0^t \frac{e^{-1/(2x)}}{\sqrt{2 \pi} x^{3/2}} \, \mathrm{d}x \quad \textrm{as } M \to \infty . \]
\end{thm}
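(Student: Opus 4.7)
The plan is to work from the Fourier expansion (\ref{fourierseries}) together with the stochastic behavior of the continued fraction of a Lebesgue-random $\alpha$. Writing
\[ \log P_N(\alpha)=-\sum_{m\ge 1}\frac{1}{m}\sum_{n=1}^N\cos(2\pi mn\alpha) \]
and exchanging summation orders, I would first derive explicit Diophantine formulas for $\frac{1}{M}\sum_{N=1}^M\log P_N(\alpha)$ and for the centered second Ces\`aro moment $\frac{1}{M}\sum_{N=1}^M\bigl(\log P_N(\alpha)-\tfrac12\log M\bigr)^2$; the main asymptotic $\tfrac12\log M$ in the mean would be extracted analogously to \eqref{expectedbadlyapprox}.

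For the first assertion it suffices to bound the remainder $R_M(\alpha):=\frac{1}{M}\sum_{N=1}^M\log P_N(\alpha)-\tfrac12\log M$ in Lebesgue $L^2$. Integrating the explicit formula term by term and using the standard estimates $\int_0^1|\sin(\pi m\alpha)|^{-1}\,d\alpha\ll\log m$ and $\int_0^1|\sin(\pi m\alpha)|^{-2}\,d\alpha\ll m$ should yield $\|R_M\|_{L^2}\ll\sqrt{\log M}\,\log\log M$; Chebyshev's inequality then promotes this to convergence in Lebesgue measure at the claimed rate $o(\sqrt{\log M}(\log\log M)^2)$.

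For the second assertion the explicit formula for the centered second moment is a double Fourier sum over $(m_1,m_2)$. The dominant contribution comes from the diagonal $m_1=m_2$ restricted to those $m$ close to a convergent denominator $q_k(\alpha)$, since otherwise $\|m\alpha\|^{-2}$ is too small to matter. Using the three-distance theorem to group the $m$'s by their nearest convergent, and controlling off-diagonal pairs with $\|(m_1\pm m_2)\alpha\|$ small by $L^1$ or $L^2$ Diophantine estimates, the centered second moment reduces, modulo a remainder which is $o_{\mathbb{P}}(\log^2 M)$, to
\[ \frac{\pi^2}{720}\sum_{k=1}^{K_M(\alpha)}a_{k+1}(\alpha)^2,\qquad K_M(\alpha):=\max\{k\ge 0:q_k(\alpha)\le M\}. \]
L\'evy's theorem $\log q_k/k\to\pi^2/(12\log 2)$ almost surely allows $K_M(\alpha)$ to be replaced by the deterministic quantity $(12\log 2/\pi^2)\log M$ up to a relative error negligible in measure.

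Under the Gauss measure the sequence $(a_k)$ is stationary and exponentially $\psi$-mixing, with heavy tail $\mathbb{P}(a_k\ge t)\sim(t\log 2)^{-1}$, hence $\mathbb{P}(a_k^2\ge t)\sim(\log 2)^{-1}t^{-1/2}$. The classical stable limit theorem for partial quotients (Heinrich, Samur, Aaronson--Denker) then gives $K^{-2}\sum_{k=1}^{K}a_{k+1}^2\Rightarrow(\pi/2)(\log 2)^{-2}L$, where $L$ is the standard L\'evy distribution; combining with the constant $\pi^2/720$ and $K_M\sim(12\log 2/\pi^2)\log M$ produces precisely the normalization $10\pi$. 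Since the Gauss measure is equivalent to Lebesgue and the limit law does not depend on $\alpha$, distributional convergence transfers to any absolutely continuous $\mu$ by a standard change-of-measure argument. The technically delicate heart of the proof is the Diophantine reduction above: isolating only the convergent-denominator contributions among all Fourier pairs, bounding the many off-diagonal and non-convergent terms in probability, and tracking the precise numerical factor $\pi^2/720$ that ultimately yields the $10\pi$ normalization. Once this surgical reduction is in place, the stable limit theorem and the change-of-measure step are essentially a black box.
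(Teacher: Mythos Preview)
Your overall architecture for the second assertion --- reduce the centered second moment to $\tfrac{\pi^2}{720}\sum_{k\le K_M(\alpha)}a_k^2$, replace $K_M(\alpha)$ by the deterministic $k_0\sim\tfrac{12\log 2}{\pi^2}\log M$, and invoke Samur's stable limit theorem --- matches the paper and is correct. The numerical bookkeeping you sketch for the constant $10\pi$ is also right.

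The gap is in how you propose to control the error terms. Both ``standard estimates'' you quote,
\[
\int_0^1|\sin(\pi m\alpha)|^{-1}\,d\alpha\ll\log m,\qquad \int_0^1|\sin(\pi m\alpha)|^{-2}\,d\alpha\ll m,
\]
are false: these integrals are infinite, since $|\sin(\pi m\alpha)|^{-1}$ has non-integrable singularities at each $\alpha=j/m$. Consequently $R_M$ is not in $L^2([0,1])$ (nor even $L^1$), and the Chebyshev argument for the first assertion collapses. The same obstruction hits your plan to bound the off-diagonal pairs ``by $L^1$ or $L^2$ Diophantine estimates'': quantities like $\int_0^1 (m\|m\alpha\|)^{-1}\,d\alpha$ or $\int_0^1\|m\alpha\|^{-2}\,d\alpha$ diverge, so the surgical reduction cannot be carried out by integrating over $\alpha$.

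The paper avoids this entirely by never integrating Diophantine expressions over $\alpha$. Instead it uses the pointwise Proposition~\ref{momentsprop}, valid for every $\alpha$ with $a_k\ll k^d$ (hence a.e.\ $\alpha$), whose error terms are expressed as $\max_{|\ell-k_M(\alpha)|\ll\log k_M(\alpha)}a_\ell$ times powers of $\log\log M$. These local partial-quotient maxima are then controlled \emph{in measure}: the CLT \eqref{logqkCLT} localizes $k_M(\alpha)$ to a window of width $O(\sqrt{\log M})$ around $k_0$, and a union bound over that window shows the maximal $a_\ell$ there is $O_\varepsilon(\sqrt{\log M})$ outside a set of small measure. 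This gives $\zeta_M=o(\sqrt{\log M}(\log\log M)^2)$ and $\zeta_M^*=o(\log M(\log\log M)^5)$ in measure, which is exactly what is needed. Your proposal would be repaired by replacing the $L^2$ step with this pointwise-plus-measure argument.
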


The different behavior of a.e.\ $\alpha$ compared to badly approximable irrationals and Euler's number --- anticoncentration instead of concentration, the failure of the central limit theorem and the difference between limsup and liminf --- might come as a surprise, considering that a.e.\ $\alpha$ is only logarithmic factors away from being badly approximable: we have $\inf_{q \ge 1} \varphi (q) \| q \alpha \| >0$ if and only if $\sum_{k=1}^{\infty} 1/\varphi (k) < \infty$. The explanation lies in the continued fraction $\alpha =[a_0;a_1,a_2, \dots ]$. As we will see, $\max_{1 \le N <q_k} \log P_N (\alpha)$ is comparable to $a_1+\cdots +a_k$, while the variance is comparable to $a_1^2+\cdots +a_k^2$. For badly approximable irrationals and Euler's number each term in these sums is negligible compared to the whole sum; in contrast, for a.e.\ $\alpha$ the maximal term can dominate. Our proofs rely on asymptotic results on (trimmed) sums of partial quotients, see \cite{BBH,DV,PH,SA} and references therein.

\subsection{A Diophantine product}\label{diophantineproductsection}

The reader might wonder what happens if in the definition of the Sudler product we replace $|\sin (\pi x)|$ by a similar function such as the distance from the nearest integer function $\| x \|$. The behavior of the Diophantine product $\prod_{n=1}^N \| n \alpha \|$ can actually be reduced to that of the Sudler product.
\begin{prop}\label{diophantineproduct} Assume that $\inf_{q \ge 1} q^c \| q \alpha \| >0$ with some constant $1 \le c<2$. Then
\[ P_N (\alpha ) \ll \prod_{n=1}^N \left( 2e \| n \alpha \| \right) \ll P_N (\alpha ) \]
with implied constants depending only on $\alpha$.
\end{prop}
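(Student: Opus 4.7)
The plan is to reduce the statement to a uniform bound on a Birkhoff sum, and then verify that bound via Fourier analysis combined with the Diophantine condition on $\alpha$.

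Using the Euler product $\sin \pi y = \pi y \prod_{k \ge 1}(1 - y^2/k^2)$ applied to $y = \|x\|$ (note $|2\sin\pi x| = 2\sin\pi\|x\|$), one obtains
\[
\log \frac{P_N(\alpha)}{\prod_{n=1}^N(2e\|n\alpha\|)} = \sum_{n=1}^N g(n\alpha),
\]
where $g(x) := \log|2\sin\pi x| - \log(2e\|x\|)$, extended by continuity at integers with value $\log(\pi/e)$. The logarithmic singularities of the two pieces cancel at integers, so $g$ is continuous and real-analytic on $\mathbb{R} \setminus (\tfrac{1}{2} + \mathbb{Z})$; at $\tfrac{1}{2} + \mathbb{Z}$ its first derivative has a jump of size $4$, inherited from the kink of $\|\cdot\|$. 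Combining the classical $\int_0^1 \log|2\sin\pi x|\,\mathrm{d}x = 0$ with the elementary $\int_0^1 \log(2e\|x\|)\,\mathrm{d}x = 0$ shows $\int_0^1 g(x)\,\mathrm{d}x = 0$, so the proposition reduces to the uniform bound $\bigl|\sum_{n=1}^N g(n\alpha)\bigr| = O_\alpha(1)$.

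Because $g$ is $C^0$ with a single derivative jump per period, two integrations by parts give $|\hat g(m)| \le C/m^2$ for all $m \ne 0$. Expanding as a Fourier series and using the standard estimate $|\sum_{n=1}^N e^{2\pi i m n \alpha}| \le (2\|m\alpha\|)^{-1}$ (uniform in $N$),
\[
\Bigl|\sum_{n=1}^N g(n\alpha)\Bigr| \le \sum_{m \ne 0} |\hat g(m)|\cdot \frac{1}{2\|m\alpha\|} \ll \sum_{m=1}^\infty \frac{1}{m^2 \|m\alpha\|}.
\]

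The hard part will be verifying that this series converges under $\|q\alpha\| \ge c_0 q^{-c}$ with $c < 2$: the trivial bound $1/\|m\alpha\| \le m^c/c_0$ alone only yields $\sum m^{c-2}$, divergent once $c \ge 1$, so one must exploit the sparsity of the small values of $\|m\alpha\|$. Writing $\alpha = [a_0; a_1, a_2, \dots]$ with convergent denominators $q_k$, the Diophantine hypothesis is equivalent to $q_{k+1} \ll q_k^c$. Standard estimates (based on the best-approximation property of the convergents) then yield $S(M) := \sum_{m \le M} 1/\|m\alpha\| \ll M\log M + M^c$; indeed, the main contribution comes from the indices $m$ near each convergent denominator $q_k \le M$, each contributing $\asymp q_{k+1} \ll q_k^c \ll M^c$. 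Partial summation finally gives
\[
\sum_{m=1}^\infty \frac{1}{m^2 \|m\alpha\|} \ll \sum_{m=1}^\infty \frac{S(m)}{m^3} \ll \sum_{m=1}^\infty \Bigl(\frac{\log m}{m^2} + \frac{1}{m^{3-c}}\Bigr),
\]
which is convergent precisely because $c < 2$. This completes the proof.
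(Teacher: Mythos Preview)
Your proof is correct and follows essentially the same route as the paper: both reduce to showing that the difference function $g(x)=\log|2\sin\pi x|-\log(2e\|x\|)$ has mean zero and Fourier coefficients $\ll 1/m^2$, then bound the Birkhoff sum by $\sum_{m\ge 1} 1/(m^2\|m\alpha\|)$ and show this converges under the Diophantine hypothesis. The only stylistic differences are that the paper computes the Fourier coefficients of $\log(2e\|x\|)$ explicitly via the sine integral $\int_0^\infty (\sin x)/x\,\mathrm{d}x=\pi/2$ (obtaining $-1/(2m)+c_m$ with $c_m\ll 1/m^2$, so that the $-1/(2m)$ part exactly matches the coefficients of $\log|2\sin\pi x|$), whereas you argue abstractly from the regularity of $g$; and the paper simply cites the convergence of $\sum 1/(m^2\|m\alpha\|)$ for $c<2$ as standard, while you sketch it via partial summation (your bound $S(M)\ll M\log M+M^c$ is off by a harmless $\log M$ factor on the second term, but the conclusion is unaffected).
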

\noindent The Diophantine condition is satisfied by all irrationals mentioned above: with $c=1$ by a badly approximable $\alpha$, and with $c=1+\varepsilon$ by Euler's number $e$ and by a.e.\ $\alpha$. All results for $P_N (\alpha )$ stated above including the concentration inequalities and the central limit theorems thus have an analogue for the Diophantine product. For instance, \eqref{goldenratio} reads
\[ 1 \ll \prod_{n=1}^N \bigg( 2e \bigg\| n \frac{1+\sqrt{5}}{2} \bigg\| \bigg) \ll N, \]
where both the upper and the lower bound are sharp.
\begin{proof}[Proof of Proposition \ref{diophantineproduct}] Consider the even $1$-periodic functions $f(x)=\log |2 \sin (\pi x)|$ and $g(x)=\log \| x \| +1+\log 2$. Note that $g$ has zero mean, and its Fourier coefficients are easily computable using integration by parts:
\[ \begin{split} \int_0^1 g(x) \cos (2 \pi m x) \, \mathrm{d}x &= 2 \int_0^{1/2} (\log x) \cos (2 \pi m x) \, \mathrm{d}x = -2 \int_0^{1/2} \frac{\sin (2 \pi m x)}{2 \pi m x} \, \mathrm{d}x \\ &= -\frac{1}{\pi m} \int_0^{\pi m} \frac{\sin x}{x} \, \mathrm{d} x = - \frac{1}{2m} +c_m \end{split} \]
with $c_m = 1/(\pi m) \int_{\pi m}^{\infty} (\sin x)/x \, \mathrm{d}x \ll 1/m^2$. In the last step we used $\int_0^{\infty} (\sin x)/x \, \mathrm{d}x = \pi /2$. By sufficient smoothness, the Fourier series of $f(x)$ and $g(x)$ converge pointwise at nonintegral reals, hence
\[ g(x) = \sum_{m=1}^{\infty} \left( - \frac{1}{m}+2c_m \right) \cos (2 \pi m x) = f(x) + \sum_{m=1}^{\infty} 2c_m \cos (2 \pi m x) . \]
Therefore
\[ \left| \sum_{n=1}^N g(n \alpha ) - \sum_{n=1}^N f(n\alpha ) \right| = \left| \sum_{m=1}^{\infty} c_m \left( 1- \frac{\sin (\pi (2N+1)m \alpha)}{\sin (\pi m \alpha )} \right) \right| \ll \sum_{m=1}^{\infty} \frac{1}{m^2 \| m \alpha \|} . \]
The last series is easily seen to converge under the assumption $\inf_{q \ge 1} |q|^{c} \| q \alpha \|>0$, $c<2$ (cf.\ \cite[Chapter 2.3]{KN} and Lemma \ref{diophantinelemma} below), which proves the claim.
\end{proof}

\section{Distribution of Sudler products}\label{sudlerdistribution}

\subsection{Expected value and variance}

Our results on the expected value and the variance of Sudler products will be based on the explicit formulas
\begin{equation}\label{explicit1}
\log P_N(\alpha ) = \sum_{m=1}^{\infty} \frac{1}{2m} \left( 1-\frac{\sin (\pi (2N+1)m\alpha )}{\sin (\pi m \alpha )} \right)
\end{equation}
and
\begin{equation}\label{explicit2}
\frac{1}{M} \sum_{N=0}^{M-1} \log P_N(\alpha ) = \sum_{m=1}^{\infty} \frac{1}{2m} \left( 1-\frac{\sin^2 (\pi M m\alpha )}{M\sin^2 (\pi m \alpha )} \right) ,
\end{equation}
which follow directly from the Fourier series expansion \eqref{fourierseries} for all irrational $\alpha$. We sum over $0 \le N \le M-1$ instead of $1 \le N \le M$ purely for aesthetic reasons, and to emphasize the connection to the Dirichlet kernel $\frac{\sin (\pi (2N+1)x)}{\sin (\pi x)}$ and the Fej\'er kernel $\frac{\sin^2 (\pi M x )}{M\sin^2 (\pi x)}$.

Let $\alpha =[a_0;a_1,a_2,\dots ]$ be irrational with convergents $p_k/q_k=[a_0;a_1, \dots, a_k]$. For the sake of simplicity, we will assume that the partial quotients increase at most polynomially fast, i.e.\ $a_k \ll k^d$ with some constant $d \ge 0$. This covers all types of irrationals mentioned before: badly approximable irrationals with $d=0$, Euler's number with $d=1$ and a.e.\ irrational with $d=1+\varepsilon$. This condition is closely related to the irrationality measure of $\alpha$. For instance, we have the implications
\[ a_k \ll k^d \,\, \Longrightarrow \,\, \inf_{q \ge 2} \| q \alpha \| q (\log q)^d >0 \,\, \Longrightarrow \,\, a_k \ll (k \log k)^d . \]
Obvious modifications of the proof yield similar results for more general irrationals.
\begin{prop}\label{momentsprop}
Assume that $a_k \le ck^d$ for all $k \ge 1$ with some constants $c,d \ge 0$. For any $q_k \le M < q_{k+1}$ we have
\begin{equation}\label{expectedvalue}
\frac{1}{M} \sum_{N=0}^{M-1} \log P_N (\alpha ) = \frac{1}{2} \log M + O \left( \max_{|\ell -k| \ll \log k} a_{\ell} \cdot \log \log M \right)
\end{equation}
and
\begin{equation}\label{variance}
\frac{1}{M} \sum_{N=0}^{M-1} \left( \log P_N (\alpha ) - \frac{1}{2} \log M \right)^2 = \sum_{m=1}^M \frac{1}{8 \pi^2 m^2 \| m \alpha \|^2} + O \left( \max_{|\ell -k|\ll \log k} a_{\ell}^2 \cdot (\log \log M)^4 \right)
\end{equation}
with implied constants depending only on $c$ and $d$.
\end{prop}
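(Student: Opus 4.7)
The two starting points are the explicit formulas \eqref{explicit1} and \eqref{explicit2}, which express $\log P_N(\alpha)$ and its Ces\`aro average in terms of the Dirichlet kernel $D_N(x)=\sin(\pi(2N+1)x)/\sin(\pi x)$ and the Fej\'er kernel $F_M(x)=\sin^2(\pi Mx)/(M\sin^2(\pi x))$ evaluated at the points $m\alpha$. The plan is to estimate these Diophantine sums using the continued fraction structure of $\alpha$, reducing everything to standard bounds on level-by-level contributions $m\in[q_j,q_{j+1})$.

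For the expected value, I would start from \eqref{explicit2} and split the outer sum at $m=M$. For $m\le M$, the "constant" contribution $\sum_{m\le M}1/(2m)$ produces the main term $\tfrac{1}{2}\log M+O(1)$, while $\sum_{m\le M}F_M(m\alpha)/(2m)$ is handled by grouping $m$ into continued-fraction blocks and combining the pointwise bound $F_M(x)\le \min(M,1/(4M\|x\|^2))$ with the fact that inside each block only $O(1)$ values of $m$ produce a very small $\|m\alpha\|$, by the three distance theorem (or, equivalently, the Ostrowski expansion). The bad $m$'s at level $\ell\approx k$ contribute at most $O(a_\ell)$ per block, and summing over $O(\log\log M)$ relevant blocks produces the claimed error $\max_{|\ell-k|\ll\log k}a_\ell\cdot\log\log M$. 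For the tail $m>M$ one cannot simply bound $1/(2m)$ alone; instead, I would use $1-F_M(x)=-\sum_{0<|k|<M}(1-|k|/M)e^{2\pi ikx}$, exchange orders of summation, and bound the truncated series $\sum_{m>M}e^{2\pi ikm\alpha}/m$ by $O(1/(M\|k\alpha\|))$ via partial summation, then apply a second Diophantine estimate over $k$.

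For the variance, I would subtract \eqref{explicit2} from \eqref{explicit1} to write
\[
\log P_N(\alpha)-\frac{1}{M}\sum_{N'=0}^{M-1}\log P_{N'}(\alpha)=\sum_{m=1}^{\infty}\frac{F_M(m\alpha)-D_N(m\alpha)}{2m},
\]
then square and average over $N\in\{0,\dots,M-1\}$. The result is a double sum over $(m,m')$ with kernel $A_M(m,m')=\frac{1}{M}\sum_N(D_N(m\alpha)-F_M(m\alpha))(D_N(m'\alpha)-F_M(m'\alpha))$. Expanding each Dirichlet kernel as an exponential sum and computing the resulting geometric averages in $N$, the diagonal $m=m'$ contributes approximately $1/(2\sin^2(\pi m\alpha))$; weighted by $1/(4m^2)$ and summed over $m\le M$, this yields $\sum_{m\le M}1/(8\pi^2 m^2\|m\alpha\|^2)$ after replacing $\sin(\pi x)$ by $\pi\|x\|$ and absorbing the resulting error into the target bound. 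The off-diagonal pairs $m\neq m'$ must be shown to contribute only $O(\max a_\ell^2(\log\log M)^4)$: this reduces to bounding geometric sums in $N$ of exponentials with frequency $m\alpha\pm m'\alpha$, which are controlled unless $m\pm m'$ is close to a denominator $q_j$ of $\alpha$.

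The main obstacle will be these off-diagonal cross-terms in the variance, together with the analogous tail $m>M$ in the expected value. In both cases, the analysis reduces to weighted Diophantine sums of the form $\sum 1/(m^a\|m\alpha\|^b)$ stratified by continued-fraction level; the work lies in keeping precise track of which levels $\ell$ near $k$ are allowed to contribute via their partial quotients, so that one arrives at the bound $\max_{|\ell-k|\ll\log k}a_\ell^{\,s}\cdot(\log\log M)^{O(1)}$ uniformly under the hypothesis $a_k\le ck^d$. Once these level-by-level Diophantine estimates are in place, both \eqref{expectedvalue} and \eqref{variance} follow.
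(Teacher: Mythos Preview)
Your level-by-level strategy is the right one, and your treatment of the range $m\le M$ in the expected value, and of the diagonal in the variance, matches what the paper does via Lemma~\ref{diophantinelemma}. The gap is the tail $m>M$ in \eqref{explicit2}. After your swap-and-partial-summation step you are left with
\[
\sum_{0<|j|<M}\Bigl(1-\frac{|j|}{M}\Bigr)\cdot O\!\left(\frac{1}{M\|j\alpha\|}\right)
\;\ll\; \frac{1}{M}\sum_{j=1}^{M-1}\frac{1}{\|j\alpha\|}\;\ll\; \log^{d+1}M,
\]
by the first estimate in Lemma~\ref{diophantinelemma}. For $d=0$ this is $\log M$, not the target $\log\log M$, and no sharper ``second Diophantine estimate over $k$'' will save it: every continued-fraction level $\ell\le k$ contributes a comparable amount to $\sum_{j\le M}1/\|j\alpha\|$, so the logarithmic loss is intrinsic to cutting at $M$. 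The same issue resurfaces in your variance computation, where you square an \emph{infinite} sum over $m$ and must then control an infinite off-diagonal double sum.

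The paper fixes both problems with a single move: it never splits \eqref{explicit2}, but instead truncates the Fourier series \eqref{fourierseries} pointwise at $M'=M\log^{2d+2}M$, using
\[
\Bigl|\sum_{m>M'}\frac{\cos(2\pi mx)}{m}\Bigr|\ll\frac{1}{M'\|x\|}
\]
from partial summation. Summing this over $n\le N\le M$ via $\sum_{n\le M}1/\|n\alpha\|\ll M\log^{d+1}M$ leaves a negligible $O(1/\log^{d+1}M)$. What remains is a \emph{finite} sum over $1\le m\le M'$ in both \eqref{explicit1} and \eqref{explicit2}; on this range your Fej\'er-kernel bound for the expected value goes through (the extra range $M<m\le M'$ costs only $\max_{|\ell-k|\ll\log k}a_\ell\cdot\log\log M$), and the variance becomes a finite double sum whose off-diagonal is precisely the last estimate of Lemma~\ref{diophantinelemma}. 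Your argument can be repaired the same way: move the cutoff from $M$ to $M\log^{2d+2}M$.
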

\noindent It remains to analyze the Diophantine sum on the right hand side of \eqref{variance}, which we do for various types of irrationals in Sections \ref{badlysection} and \ref{nonbadlysection}. Following the methods of Beck \cite[Proposition 3.1, p.\ 171]{BE1}, we first prove several simple Diophantine sum estimates, and then give the proof of Proposition \ref{momentsprop}.

\begin{lem}\label{diophantinelemma} Assume that $a_k \le ck^d$ for all $k \ge 1$ with some constants $c,d \ge 0$. For any $q_k \le M < q_{k+1}$,
\[ \begin{split} \sum_{1 \le m \le M} \frac{1}{\| m \alpha \|} &\ll M (\log M)^{d+1} , \\ \sum_{1 \le m \le M (\log M)^{2d+2}} \frac{1}{m} \min \left\{ \frac{1}{M \| m \alpha \|^2}, M \right\} &\ll \max_{|\ell -k| \ll \log k} a_{\ell} \cdot \log \log M , \\ \sum_{1 \le m \le M (\log M)^{2d+2}} \frac{1}{m^2 \| m \alpha \|^2} \min \left\{ \frac{1}{M \| 2 m \alpha \|} ,1 \right\} &\ll \max_{|\ell -k| \ll \log k} a_{\ell}^2 \cdot \log \log M , \\ \sum_{M<m \le M (\log M)^{2d+2}} \frac{1}{m^2 \| m\alpha \|^2} &\ll \max_{|\ell -k| \ll \log k} a_{\ell}^2 \cdot \log \log M . \end{split} \]
Further,
\[ \sum_{\substack{1 \le m_1, m_2 \le M (\log M)^{2d+2} \\ m_1 \neq m_2}} \frac{1}{m_1 \| m_1 \alpha \| m_2 \| m_2 \alpha \|} \min \left\{ \frac{1}{M \| (m_1 - m_2) \alpha \|} ,1 \right\} \ll \max_{|\ell -k| \ll \log k} a_{\ell}^2 \cdot (\log \log M)^4 , \]
and the same holds with $m_1-m_2$ replaced by $m_1+m_2$.
\end{lem}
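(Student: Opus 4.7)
The plan is to base every estimate on two classical pillars. The first is the three-distance theorem in quantitative form: for $q_k \le M < q_{k+1}$ and any $\theta > 0$, one has $|\{1 \le m \le M : \|m\alpha\| \le \theta\}| \ll \theta M + 1$, and $\|m\alpha\| \gg 1/(a_{k+1}M)$ for all $1 \le m \le M$. The second is the Ostrowski representation, which writes each integer $m < q_{k+r}$ uniquely as $m = \sum_{j=0}^{k+r-1} b_j q_j$ with $0 \le b_j \le a_{j+1}$, and yields $\|m\alpha\| \asymp |\sum_j (\pm) b_j \|q_j\alpha\||$ together with $\|q_j \alpha\| \asymp 1/q_{j+1}$. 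Under the hypothesis $a_k \le ck^d$, $\log q_k \asymp k \log k$, so for $m$ in the enlarged range $[1, M\log^{2d+2} M]$ only Ostrowski digits $b_j$ with $|j-k| \ll \log\log M$ can be nonzero; since $\log\log M \ll \log k$, this is exactly the quantifier $\max_{|\ell - k| \ll \log k} a_\ell$ appearing on the right hand sides.

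For the first sum, a dyadic decomposition over $\|m\alpha\| \in [2^{-j-1}, 2^{-j}]$ for $j$ ranging up to $\log_2(a_{k+1}M)$, combined with the three-distance count, yields $\sum_{m \le M} 1/\|m\alpha\| \ll M \log(a_{k+1}M) + a_{k+1}M \ll M \log^{d+1} M$. For the second, third, and fourth sums the summand is, as a function of $\|m\alpha\|$, a bump concentrated near the Dirichlet threshold $1/M$. Splitting $\|m\alpha\| < 1/M$ from $\|m\alpha\| \ge 1/M$ dyadically, the Ostrowski expansion identifies the exceptional $m$ with $\|m\alpha\| \asymp 1/M$ as those whose leading nonzero digit $b_\ell$ satisfies $b_\ell q_\ell \asymp M$ for some $\ell$ with $|\ell - k| \ll \log\log M$; summing the bump weight over these $O(\log\log M)$ scales and over the $O(a_{\ell+1})$ admissible values of $b_\ell$ produces $\max_{|\ell - k| \ll \log k} a_\ell \cdot \log \log M$ (or its square, accounting for the quadratic dependence on $\|m\alpha\|$ in the summand). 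In the third sum the extra factor $\min\{1/(M\|2m\alpha\|), 1\}$ is handled by separating $\{2m\alpha\}$ near $0$ (where $\|2m\alpha\| = 2\|m\alpha\|$, reducing to the second bound) from $\{2m\alpha\}$ near $1/2$ (which forces $\|m\alpha\| \approx 1/2$, a sparse exceptional set).

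For the double sum, the plan is to follow Beck's strategy from \cite{BE1}: substitute $s = m_1 \mp m_2$ and insert a bilinear decomposition of $(m_1 m_2 \|m_1 \alpha\| \|m_2 \alpha\|)^{-1}$ according to the relative sizes of $\|m_1 \alpha\|$, $\|m_2 \alpha\|$ and $\|s\alpha\|$, exploiting the binding triangle inequality $\|s\alpha\| \le \|m_1\alpha\| + \|m_2\alpha\|$. This reduces the double sum to a collection of single-variable sums already controlled by the second and fourth bounds: the exceptional pairs $(m_1, m_2)$ with nonnegligible $\min\{1/(M\|s\alpha\|),1\}$ are those for which each of $m_1, m_2, s$ has its leading nonzero Ostrowski digit at some index $\ell$ with $|\ell - k| \ll \log\log M$, and on such pairs each of the relevant exceptional-set counts contributes a factor $\max a_\ell \cdot \log\log M$. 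Combining two factors from $m_1, m_2$ with two additional $\log\log M$'s extracted from the weight $\min\{1/(M\|s\alpha\|),1\}$ and the symmetrization at the interface of the two variables yields the stated $(\max a_\ell)^2 (\log \log M)^4$.

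The main obstacle is the joint combinatorial bookkeeping in this last step. A naive Cauchy--Schwarz followed by triangle-inequality argument loses an unrecoverable $\log M$ factor, since $\sum_s \min\{1/(M\|s\alpha\|),1\}$ is of order $\log^{2d+3} M$ over the enlarged range $|s| \le M\log^{2d+2} M$. The correct bound must use jointly that $m_1, m_2$ and $s$ all lie in the exceptional Diophantine set --- an arithmetic consistency condition on the triple of leading Ostrowski digits --- and must verify at every step that only partial quotients with $|\ell - k| \ll \log k$ enter, never an arbitrary early $a_\ell$. Everything else (the dyadic splittings, the three-distance counts, and the passage from Ostrowski data to partial-quotient bounds) is routine once this skeleton is in place.
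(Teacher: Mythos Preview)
Your proposal contains a factual error and a genuine gap.

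The factual error is the claim that for $m \in [1, M\log^{2d+2}M]$ ``only Ostrowski digits $b_j$ with $|j-k|\ll\log\log M$ can be nonzero.'' This is false: $m=1$ has its only nonzero digit at $j=0$. What is true is the much weaker statement that the \emph{highest} nonzero index is at most $k+O(\log\log M)$; low indices are unrestricted. Relatedly, $\log q_k \asymp k\log k$ fails when $d=0$. These slips are repairable --- what one actually needs is that only blocks $[q_\ell,q_{\ell+1})$ with $q_\ell$ in a polylogarithmic window around $M$ contribute the main term, and there are $O(\log\log M)$ such blocks with indices $|\ell-k|\ll\log k$. The paper obtains this directly by summing over blocks $[q_\ell,q_{\ell+1})$ and using the pigeonhole distribution of $\{m\alpha\}$ inside each block; no Ostrowski expansion is needed.

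The genuine gap is the double sum. You correctly identify that a naive Cauchy--Schwarz loses a fatal $\log M$, and you say the fix ``must use jointly that $m_1,m_2,s$ all lie in the exceptional Diophantine set,'' but you do not say \emph{how}. Invoking an ``arithmetic consistency condition on the triple of leading Ostrowski digits'' is a restatement of the difficulty, not a resolution. The paper's mechanism is concrete and rather different from what you sketch: introduce the set $S=\{m:m\|m\alpha\|\ge\log^{4d+5}M\}$, and show successively that (i) terms with $m_1\in S$ or $m_2\in S$ are $O(1)$ after changing variables to $(m',m_2)=(|m_1-m_2|,m_2)$ and using the first estimate of the lemma; (ii) terms with $\|(m_1-m_2)\alpha\|\ge(\log^{2d+4}M)/M$ are $O(1)$; (iii) among the surviving terms, the constraint $m_1,m_2\notin S$ together with $\|(m_1-m_2)\alpha\|$ small \emph{forces} $m_1,m_2\in[M/\log^{8d+10}M,\,M\log^{2d+2}M]$ via a short contradiction using the triangle inequality for $\|\cdot\|$. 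Once both variables are localized to this narrow window, the double sum is bounded by the square of a single sum over $m\notin S$ in that window, and the condition $m\notin S$ caps the number of admissible $j$ in the pigeonhole argument at $O(\log^{5d+5}M)$, producing $(\max a_\ell\cdot(\log\log M)^2)^2$. Your outline does not supply this localization step, which is the heart of the matter.
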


\begin{proof} Since $q_k$ increases at least as fast as the sequence of Fibonacci numbers, we have $k \ll \log q_k \ll \log M$. We will also use the simple fact $q_0+q_1+\cdots +q_{\ell} \le 3 q_{\ell}$ for all $\ell \ge 0$. This follows e.g.\ from the recursion satisfied by $q_{\ell}$ via
\[ q_1 + q_2 + \cdots +q_{\ell-1} \le a_2 q_1 + a_3 q_2 + \cdots + a_{\ell} q_{\ell-1} = q_{\ell} + q_{\ell-1} -q_1-q_0 \le 2q_{\ell}-q_0. \]

All five claims follow from a classical method based on the pigeonhole principle, see \cite[Chapter 2.3]{KN}. More specifically, let $\ell \ge 0$, and consider the points $m \alpha \pmod{1}$, $q_{\ell} \le m < q_{\ell +1}$. By the best rational approximation property, $\| m \alpha \| \ge \| q_{\ell} \alpha \| \ge 1/(2q_{\ell +1})$. In particular, none of these points lie in the open interval $(-1/(2q_{\ell +1}), 1/(2q_{\ell +1}))$, and each interval $(j/(2q_{\ell +1}), (j+1)/(2q_{\ell +1}))$, $j \neq -1,0$ contains at most one point.

To see the first claim of the lemma, observe that for any $0 \le \ell \le k$,
\[ \sum_{q_{\ell} \le m < \min \{ q_{\ell +1},M+1\}} \frac{1}{\| m \alpha \|} \ll \sum_{j=1}^M \frac{1}{j/q_{\ell +1}} \ll q_{\ell +1} \log M . \]
Summing over $0 \le \ell \le k$ leads to
\[ \sum_{m=1}^M \frac{1}{\| m \alpha \|} \ll q_{k+1} \log M \ll k^d q_k \log M \ll M (\log M)^{d+1}. \]

We now prove the second claim of the lemma. For any $\ell \ge 0$ with $q_{\ell} \le M/(\log M)^{2d}$,
\[ \sum_{q_{\ell} \le m < q_{\ell +1}} \frac{1}{m} \min \left\{ \frac{1}{M \| m \alpha \|^2}, M \right\} \le \frac{1}{q_{\ell} M} \sum_{q_{\ell} \le m < q_{\ell +1}} \frac{1}{\| m \alpha \|^2} \ll \frac{1}{q_{\ell} M} \sum_{j=1}^{\infty} \frac{1}{j^2 / q_{\ell +1}^2} \ll \frac{a_{\ell +1}^2 q_{\ell}}{M} . \]
By the growth assumption on the partial quotients, this is $\ll q_{\ell} (\log M)^{2d} /M$. Next, note that there are $\ll \log \log M$ indices $\ell$ with $M/(\log M)^{2d} < q_{\ell} \le M (\log M)^{2d+2}$, and all of these indices satisfy $|\ell -k| \ll \log \log M \ll \log k$. For any such $\ell$ we have
\[ \sum_{q_{\ell} \le m < q_{\ell +1}} \frac{1}{m} \min \left\{ \frac{1}{M \| m \alpha \|^2}, M \right\} \ll \frac{1}{q_{\ell}} \sum_{j=1}^{\infty} \min \left\{ \frac{1}{M j^2/q_{\ell +1}^2}, M \right\} \ll \frac{q_{\ell +1}}{q_{\ell}} \ll a_{\ell +1} . \]
Therefore
\[ \begin{split} \sum_{1 \le m \le M (\log M)^{2d+2}} &\frac{1}{m} \min \left\{ \frac{1}{M \| m \alpha \|^2}, M \right\} \\ &\le \Big( \sum_{\substack{\ell \ge 0 \\ q_{\ell} \le M/(\log M)^{2d}}} +\sum_{\substack{\ell \ge 0 \\ M/(\log M)^{2d} < q_{\ell} \le  M (\log M)^{2d+2}}} \Big) \sum_{q_{\ell} \le m < q_{\ell +1}} \frac{1}{m} \min \left\{ \frac{1}{M \| m \alpha \|^2}, M \right\} \\ &\ll \sum_{\substack{\ell \ge 0 \\ q_{\ell} \le M/(\log M)^{2d}}} \frac{q_{\ell} (\log M)^{2d}}{M} + \sum_{\substack{\ell \ge 0 \\ M/(\log M)^{2d} < q_{\ell} \le  M (\log M)^{2d+2}}} a_{\ell +1} \\ &\ll 1+ \max_{|\ell -k| \ll \log k} a_{\ell} \cdot \log \log M, \end{split} \]
as claimed.

Let us now prove the third claim of the lemma. By the subadditivity of the function $\| \cdot \|$, we have $\| m \alpha \| \ge \frac{1}{2} \| 2 m \alpha \|$. Hence
\[ \begin{split} \sum_{1 \le m \le M (\log M)^{2d+2}} \frac{1}{m^2 \| m \alpha \|^2} &\min \left\{ \frac{1}{M \| 2 m \alpha \|} ,1 \right\} \\ &\ll \sum_{1 \le m \le M (\log M)^{2d+2}} \frac{1}{(2m)^2 \| 2 m \alpha \|^2} \min \left\{ \frac{1}{M \| 2 m \alpha \|} ,1 \right\} \\ &\ll \sum_{1 \le m \le 2 M (\log M)^{2d+2}} \frac{1}{m^2 \| m \alpha \|^2} \min \left\{ \frac{1}{M \| m \alpha \|} ,1 \right\} , \end{split} \]
and it will be enough to estimate the last line of the previous formula. For any $\ell \ge 0$ with $q_\ell \le M/(\log M)^{3d}$,
\[ \sum_{q_{\ell} \le m < q_{\ell+1}} \frac{1}{m^2 \| m \alpha \|^2} \min \left\{ \frac{1}{M \| m \alpha \|} ,1 \right\} \le \frac{1}{q_{\ell}^2 M} \sum_{q_{\ell} \le m < q_{\ell+1}} \frac{1}{\| m \alpha \|^3} \ll \frac{1}{q_{\ell}^2 M} \sum_{j=1}^{\infty} \frac{1}{j^3/q_{\ell+1}^3} \ll \frac{a_{\ell+1}^3 q_{\ell}}{M}. \]
By the growth assumption on the partial quotients, this is $\ll q_{\ell} (\log M)^{3d}/M$. Next, note that there are $\ll \log \log M$ indices $\ell$ with $M/(\log M)^{3d} < q_{\ell} \le 2M (\log M)^{2d+2}$, and all of these indices satisfy $|\ell -k| \ll \log \log M \ll \log k$. For any such $\ell$ we have
\[ \sum_{q_{\ell} \le m < q_{\ell+1}} \frac{1}{m^2 \| m \alpha \|^2} \min \left\{ \frac{1}{M \| m \alpha \|} ,1 \right\} \ll \frac{1}{q_{\ell}^2} \sum_{j=1}^{\infty} \frac{1}{j^2/q_{\ell+1}^2} \ll \frac{q_{\ell+1}^2}{q_{\ell}^2} \ll a_{\ell +1}^2 . \]
Therefore
\[ \begin{split} \sum_{1 \le m \le 2 M (\log M)^{2d+2}} &\frac{1}{m^2 \| m \alpha \|^2} \min \left\{ \frac{1}{M \| m \alpha \|} ,1 \right\} \\ &\le \Big( \sum_{\substack{\ell \ge 0 \\ q_{\ell} \le M/(\log M)^{3d}}} +\sum_{\substack{\ell \ge 0 \\ M/(\log M)^{3d} < q_{\ell} \le 2M (\log M)^{2d+2}}} \Big) \frac{1}{m^2 \| m \alpha \|^2} \min \left\{ \frac{1}{M \| m \alpha \|} ,1 \right\} \\ &\ll \sum_{\substack{\ell \ge 0 \\ q_{\ell} \le M/(\log M)^{3d}}} \frac{q_{\ell} (\log M)^{3d}}{M} + \sum_{\substack{\ell \ge 0 \\ M/(\log M)^{3d} < q_{\ell} \le 2M (\log M)^{2d+2}}} a_{\ell+1}^2 \\ &\ll 1+\max_{|\ell -k| \ll \log k} a_{\ell}^2 \cdot \log \log M, \end{split} \]
as claimed.

To see the fourth claim of the lemma, observe that for any $\ell \ge 0$,
\[ \sum_{q_{\ell} \le m < q_{\ell+1}} \frac{1}{m^2 \| m \alpha \|^2} \ll \frac{1}{q_{\ell}^2} \sum_{j=1}^{\infty} \frac{1}{j^2/q_{\ell+1}^2} \ll \frac{q_{\ell+1}^2}{q_{\ell}^2} \ll a_{\ell +1}^2. \]
There are $\ll \log \log M$ indices $\ell$ for which the intervals $[q_{\ell}, q_{\ell+1})$ and $[M,M (\log M)^{2d+2}]$ intersect, and all of these indices satisfy $|\ell -k| \ll \log \log M \ll \log k$. Summing over all such indices $\ell$ leads to
\[ \sum_{M \le m \le M (\log M)^{2d+2}} \frac{1}{m^2 \| m \alpha \|^2} \ll \sum_{\substack{\ell \ge 0 \\ [q_{\ell}, q_{\ell+1}) \cap [M,M (\log M)^{2d+2}] \neq \emptyset}} a_{\ell+1}^2 \ll \max_{|\ell-k|\ll \log k} a_{\ell}^2 \cdot \log \log M. \]

We now prove the final claim. Here the cases $m_1-m_2$ and $m_1+m_2$ are analogous, and we will only consider the $m_1-m_2$ case. Let
\[ R_M:= \sum_{\substack{1 \le m_1, m_2 \le M (\log M)^{2d+2} \\ m_1 \neq m_2}} \frac{1}{m_1 \| m_1 \alpha \| m_2 \| m_2 \alpha \|} \min \left\{ \frac{1}{M \| (m_1 - m_2) \alpha \|} ,1 \right\} \]
denote the sum to be estimated. Using similar methods as before, we deduce
\[ \sum_{1 \le m \le M (\log M)^{2d+2}} \frac{1}{m \| m \alpha \|} \ll \sum_{\substack{\ell \ge 0 \\ q_{\ell} \le M (\log M)^{2d+2}}} \frac{1}{q_{\ell}} \sum_{1 \le j \le M (\log M)^{2d+2}} \frac{1}{j/q_{\ell +1}} \ll (\log M)^{d+2} . \]
We will also need the fact that $\| m \alpha \| \gg 1/(m (\log m)^d)$ for all $m>1$, which follows from the assumption on the growth rate of the partial quotients.

Let $S=\{ m \ge 1 \, : \, m \| m \alpha \| \ge (\log M)^{4d+5} \}$. We first observe that the terms with $m_1 \in S$ are negligible:
\begin{multline*}
\sum_{\substack{1 \le m_1, m_2 \le M (\log M)^{2d+2} \\ m_1 \neq m_2 \\ m_1 \in S}} \frac{1}{m_1 \| m_1 \alpha \| m_2 \| m_2 \alpha \|} \min \left\{ \frac{1}{M \| (m_1 - m_2) \alpha \|} ,1 \right\} \\ \ll \frac{1}{(\log M)^{4d+5}} \sum_{1 \le m_2 \le M (\log M)^{2d+2}} \frac{1}{m_2 \| m_2 \alpha \|} \sum_{1 \le m' \le M (\log M)^{2d+2}} \frac{1}{M \| m' \alpha \|} \ll 1.
\end{multline*}
Note that instead of $m_1$ and $m_2$, we summed over $m':=|m_1-m_2|$ and $m_2$. By symmetry, the same holds for the terms with $m_2 \in S$. We similarly see that the terms with $\| (m_1-m_2) \alpha \| \ge (\log M)^{2d+4}/M$ are also negligible:
\begin{multline*}
\sum_{\substack{1 \le m_1, m_2 \le M (\log M)^{2d+2} \\ \| (m_1-m_2) \alpha \| \ge (\log M)^{2d+4}/M}} \frac{1}{m_1 \| m_1 \alpha \| m_2 \| m_2 \alpha \|} \min \left\{ \frac{1}{M \| (m_1 - m_2) \alpha \|} ,1 \right\} \\ \ll \frac{1}{(\log M)^{2d+4}} \sum_{1 \le m_1 \le M (\log M)^{2d+2}} \frac{1}{m_1 \| m_1 \alpha \|} \sum_{1 \le m_2 \le M (\log M)^{2d+2}} \frac{1}{m_2 \| m_2 \alpha \|} \ll 1.
\end{multline*}
By the previous two formulas,
\[ R_M \ll 1+ \sum_{\substack{1 \le m_1, m_2 \le M (\log M)^{2d+2} \\ 0<\| (m_1-m_2) \alpha \| \le (\log M)^{2d+4}/M \\ m_1, m_2 \not\in S}} \frac{1}{m_1 \| m_1 \alpha \| m_2 \| m_2 \alpha \|} . \]
We now claim that in the remaining sum there is no term with $1 \le m_1 \le M/(\log M)^{8d+10}$. Indeed, such an $m_1$ would satisfy $\| m_1 \alpha \| \gg (\log M)^{7d+10}/M$. On the other hand, the condition $0<\| (m_1-m_2) \alpha \| \le (\log M)^{2d+4}/M$ forces $|m_1-m_2| \gg M/(\log M)^{3d+4}$. This in turn implies $m_2 \gg M/(\log M)^{3d+4}$ and, since $m_2 \not\in S$,
\[ \| m_2 \alpha \| \le \frac{(\log M)^{4d+5}}{m_2} \ll \frac{(\log M)^{7d+9}}{M} . \]
In particular, for large enough $M$ we have, say, $\| m_1 \alpha \| \ge 2 \| m_2 \alpha \|$ and hence
\[ \| (m_1-m_2) \alpha \| \ge \| m_1 \alpha \| - \| m_2 \alpha \| \ge \frac{1}{2} \| m_1 \alpha \| \gg \frac{(\log M)^{7d+10}}{M} , \]
giving a contradiction. By symmetry, there are no terms with $1 \le m_2 \le M/(\log M)^{8d+10}$ either, therefore
\[ R_M \ll 1+ \bigg( \sum_{\substack{M/(\log M)^{8d+10} \le m \le M (\log M)^{2d+2} \\ m \not\in S}} \frac{1}{m \| m \alpha \|} \bigg)^2 . \]
To estimate the remaining sum, let $\ell \ge 0$ be an index such that the intervals $[q_{\ell}, q_{\ell +1})$ and $[M/(\log M)^{8d+10}, M (\log M)^{2d+2}]$ intersect. Note that there are $\ll \log \log M$ such indices $\ell$, each of which satisfies $|\ell -k| \ll \log k$. For any such $\ell$,
\[ \sum_{\substack{q_{\ell} \le m <q_{\ell +1} \\ m \not\in S}} \frac{1}{m \| m \alpha \|} \le \frac{1}{q_{\ell}} \sum_{1 \le j \ll (\log M)^{5d+5}} \frac{1}{j/q_{\ell +1}} \ll a_{\ell +1} \log \log M .  \]
Note that $m \not\in S$ implies that
\[ \| m \alpha \| \le \frac{(\log M)^{4d+5}}{m} \le \frac{(\log M)^{4d+5}}{q_{\ell}} , \]
explaining why we only sum over $j$ for which $j/q_{\ell +1} \le (\log M)^{4d+5}/q_{\ell}$, that is, for which $j \ll (q_{\ell +1}/q_{\ell}) (\log M)^{4d+5} \ll (\log M)^{5d+5}$. Hence
\[ \begin{split} \sum_{\substack{M/(\log M)^{8d+10} \le m \le M (\log M)^{2d+2} \\ m \not\in S}} \frac{1}{m \| m \alpha \|} &\ll \sum_{\substack{\ell \ge 0 \\ [q_{\ell}, q_{\ell +1}) \cap [M/(\log M)^{8d+10}, M (\log M)^{2d+2}] \neq \emptyset}} a_{\ell +1} \log \log M \\ &\ll \max_{|\ell -k| \ll \log k} a_{\ell} \cdot (\log \log M)^2 , \end{split} \]
and the claim follows.
\end{proof}

\begin{proof}[Proof of Proposition \ref{momentsprop}] Fix $q_k \le M < q_{k+1}$, and let $0 \le N \le M-1$. We start by estimating the tails in the explicit formulas \eqref{explicit1} and \eqref{explicit2}. Summation by parts gives that the tails of the Fourier series expansion \eqref{fourierseries} decay at the rate
\begin{equation}\label{Htail}
\left| \sum_{m=H}^{\infty} \frac{\cos (2 \pi m x)}{m} \right| \le \frac{1}{2 H \| x \|} , \qquad H \ge 1.
\end{equation}
Choosing $H=\lceil M (\log M)^{2d+2} \rceil$ we deduce
\begin{equation}\label{logpnexpansion}
\begin{split} \log P_N (\alpha ) &= \sum_{n=1}^N \sum_{1 \le m \le M (\log M)^{2d+2}} \frac{-\cos (2 \pi m n \alpha )}{m} + O \left( \frac{1}{M (\log M)^{2d+2}} \sum_{n=1}^N \frac{1}{\| n \alpha \|} \right) \\ &= \sum_{1 \le m \le M (\log M)^{2d+2}} \frac{1}{2m} \left( 1-\frac{\sin (\pi (2N+1)m\alpha)}{\sin (\pi m \alpha)} \right) + O \left( \frac{1}{(\log M)^{d+1}} \right) , \end{split}
\end{equation}
and by averaging over $0 \le N \le M-1$,
\begin{equation}\label{logpnaverageexpansion}
\frac{1}{M} \sum_{N=0}^{M-1} \log P_N (\alpha ) = \sum_{1 \le m \le M (\log M)^{2d+2}} \frac{1}{2m} \left( 1-\frac{\sin^2 (\pi M m\alpha)}{M\sin^2 (\pi m \alpha)} \right) + O \left( \frac{1}{(\log M)^{d+1}} \right) .
\end{equation}
Note that in \eqref{logpnexpansion} we used the first Diophantine sum estimate from Lemma \ref{diophantinelemma}.

Using the estimate
\[ \frac{\sin^2 (\pi M m\alpha)}{M\sin^2 (\pi m \alpha)} \le \min \left\{ \frac{1}{M \| m \alpha \|^2}, M \right\} \]
in \eqref{logpnaverageexpansion}, we deduce
\[ \frac{1}{M} \sum_{N=0}^{M-1} \log P_N (\alpha ) = \frac{1}{2} \log M + O \bigg( \log \log M+ \sum_{1 \le m \le M (\log M)^{2d+2}} \frac{1}{m} \min \left\{ \frac{1}{M \| m \alpha \|^2}, M \right\} \bigg) . \]
Claim \eqref{expectedvalue} thus follows from the second Diophantine sum estimate in Lemma \ref{diophantinelemma}.

Next, we prove \eqref{variance}. Formula \eqref{logpnexpansion} yields
\begin{equation}\label{varianceVM}
\frac{1}{M} \sum_{N=0}^{M-1} \left( \log P_N (\alpha ) - H_M \right)^2 = V_M + O \left( \frac{\sqrt{V_M}}{(\log M)^{d+1}} + \frac{1}{(\log M)^{2d+2}} \right)
\end{equation}
with
\[ H_M:=\sum_{1 \le m \le M (\log M)^{2d+2}} \frac{1}{2m} = \frac{1}{2} \log M + O(\log \log M) \]
and
\[ V_M:=\frac{1}{M} \sum_{N=0}^{M-1} \left( \sum_{1 \le m \le M (\log M)^{2d+2}} \frac{\sin (\pi (2N+1)m\alpha)}{2m \sin (\pi m \alpha)} \right)^2 . \]
Let us now expand the square in the previous formula. The diagonal terms satisfy
\[ \begin{split} \frac{1}{M} \sum_{N=0}^{M-1} \sum_{1 \le m \le M (\log M)^{2d+2}} &\frac{\sin^2 (\pi (2N+1)m\alpha)}{4 m^2 \sin^2 (\pi m \alpha )} \\ &= \sum_{1 \le m \le M (\log M)^{2d+2}} \frac{1}{4 m^2 \sin^2 (\pi m \alpha )} \left( \frac{1}{2}+O \left( \min \left\{ \frac{1}{M \| 2 m \alpha \|} ,1 \right\} \right) \right) \\ &= \sum_{m=1}^M \frac{1}{8 \pi^2 m^2 \| m \alpha \|^2} + O \left( \max_{|\ell -k| \ll \log k} a_{\ell}^2 \cdot \log \log M \right) , \end{split} \]
where we used the third claim in Lemma \ref{diophantinelemma} and the fact that $1/\sin^2 (\pi x) = 1/(\pi^2 \| x \|^2) +O(1)$. The last claim in Lemma \ref{diophantinelemma} also shows that the contribution of the off-diagonal terms is negligible:
\[ \begin{split} \bigg| \frac{1}{M} \sum_{N=0}^{M-1} &\sum_{\substack{1 \le m_1, m_2 \le M (\log M)^{2d+2} \\ m_1 \neq m_2}} \frac{\sin (\pi (2N+1)m_1\alpha ) \sin (\pi (2N+1)m_2 \alpha )}{2m_1 \sin (\pi m_1 \alpha ) 2m_2 \sin (\pi m_2 \alpha )} \bigg| \\ \ll &\sum_{\substack{1 \le m_1, m_2 \le M (\log M)^{2d+2} \\ m_1 \neq m_2}} \frac{1}{m_1 \| m_1 \alpha \| m_2 \| m_2 \alpha \|} \min \left\{ \frac{1}{M \| (m_1 - m_2) \alpha \|} ,1 \right\} \\ &+\sum_{\substack{1 \le m_1, m_2 \le M (\log M)^{2d+2} \\ m_1 \neq m_2}} \frac{1}{m_1 \| m_1 \alpha \| m_2 \| m_2 \alpha \|} \min \left\{ \frac{1}{M \| (m_1 + m_2) \alpha \|} ,1 \right\} \\ \ll &\max_{|\ell -k| \ll \log k} a_{\ell}^2 \cdot (\log \log M)^4 . \end{split} \]
By the previous two formulas,
\[ V_M= \sum_{m=1}^M \frac{1}{8 \pi^2 m^2 \| m\alpha \|^2} + O \left( \max_{|\ell -k| \ll \log k} a_{\ell}^2 \cdot (\log \log M)^4 \right) . \]
In particular, $V_M \ll (\log M)^{2d+1}$ (cf.\ formula \eqref{diophantinesumpartialquotients} below), hence the error term on the right hand side of \eqref{varianceVM} is $O(1)$. The first claim \eqref{expectedvalue} shows that the error of replacing $H_M$ by $(1/2) \log M$ in \eqref{varianceVM} is also negligible, and the claim \eqref{variance} follows.
\end{proof}

\subsection{Badly approximable irrationals}\label{badlysection}

Fix a badly approximable $\alpha$. We first note that \eqref{expectedbadlyapprox} and \eqref{variancebadlyapprox} follow easily from Proposition \ref{momentsprop}. Indeed, the only missing piece is the observation
\begin{equation}\label{m2malpha2sum}
\log M \ll \sum_{m=1}^M \frac{1}{8 \pi^2 m^2 \| m \alpha \|^2} \ll \log M .
\end{equation}
Here the upper bound follows from the pigeonhole principle as in Lemma \ref{diophantinelemma} (cf.\ formula \eqref{diophantinesumpartialquotients} below and \cite[Theorem 3]{BO}). To see the lower bound, simply keep the convergent denominators $m=q_k \le M$: each such term contributes $\gg 1$, and there are $\gg \log M$ denominators $q_k \le M$.

\begin{proof}[Proof of Theorem \ref{badlyapproxtheorem}] From \eqref{expectedbadlyapprox}, \eqref{variancebadlyapprox} and the Chebyshev inequality we obtain
\[ \frac{1}{M} \left| \left\{ 1 \le N \le M \, : \, \left| \log P_N (\alpha ) - \frac{1}{2} \log M \right| \ge t \sqrt{\log M} \right\} \right| \ll \frac{1}{t^2} \]
with an implied constant depending only on $\alpha$. It remains to replace $\log M$ by $\log N$ in the previous formula. For any $M/(\log M)^2 \le N \le M$ we have $\log N = \log M+O(\log \log M)$, and so one readily checks that
\[ \frac{1}{M} \left| \left\{ 1 \le N \le M \, : \, \left| \log P_N (\alpha ) - \frac{1}{2} \log N \right| \ge t \sqrt{\log N} \right\} \right| \ll \frac{1}{(\log M)^2} + \frac{1}{t^2} . \]
Note that $1/(\log M)^2$ is the ``probability'' of $1 \le N \le M/(\log M)^2$. The claim thus follows once $t \ll \log M$. However, the claim also trivially holds for $t \gg \log M$, since by the result of Lubinsky, $|\log P_N (\alpha )| \ll \log N$ for all $N$.
\end{proof}

Next, assume in addition that $\alpha$ is a quadratic irrational. Using deep arithmetic properties of the real quadratic field $\mathbb{Q}(\alpha)$, Beck \cite[Proposition 3.2, p.\ 176]{BE1} proved that in this case
\begin{equation}\label{beckformula}
\sum_{m=1}^M \frac{1}{8\pi^2 m^2 \| m \alpha \|^2} = \sigma (\alpha )^2 \log M +O(1)
\end{equation}
with some constant $\sigma (\alpha) >0$ and an implied constant depending only on $\alpha$. Formula \eqref{variancequadratic} thus follows from Proposition \ref{momentsprop}.

\subsection{Central limit theorem for quadratic irrationals}\label{CLTsection}

Theorem \ref{CLTtheorem} is obviously a special case of Theorem \ref{2dimCLTtheorem}, so it will be enough to prove the latter. We will actually work with $0 \le N < M$ instead of $1 \le N \le M$, but this of course does not affect the result. The reason is simply notational convenience, as the former interval is better suited for Ostrowski expansions.

We follow the ``Fourier series approach'' of Beck \cite[Chapter 4.4]{BE1}. There it is shown that for a quadratic irrational $\alpha$, the Birkhoff sum $S_N(\alpha ) = \sum_{n=1}^N (\{ n \alpha \} -1/2)$ can be written in the form
\[ S_N (\alpha ) - E(\alpha ) \log M = \sum_{1 \le m \le M \log M} \frac{\cos (\pi (2N+1)m\alpha )}{2 \pi m \sin (\pi m \alpha )} +O(\log \log M), \quad 0 \le N < M. \]
Using the tail estimate \eqref{Htail} with $H = \lceil M \log M \rceil$ and following the steps in \eqref{logpnexpansion}, we obtain the similar formula
\[ \log P_N (\alpha ) - \frac{1}{2} \log M = \sum_{1 \le m \le M \log M} \frac{-\sin (\pi (2N+1)m \alpha )}{2m \sin (\pi m \alpha )} + O(\log \log M), \quad 0 \le N < M . \]
For the sake of readability, let us use the natural identification $(x,y) \leftrightarrow x+iy$ between $\mathbb{R}^2$ and $\mathbb{C}$. Then, for $0 \le N < M$,
\begin{equation}\label{identification}
\left( \log P_N(\alpha) - \frac{1}{2} \log M, \pi (S_N(\alpha) - E(\alpha) \log M) \right) \leftrightarrow \sum_{1 \le m \le M \log M} \frac{i e^{2 \pi i (N+1/2)m \alpha}}{2m \sin (\pi m \alpha)} + O(\log \log M) .
\end{equation}

Now let $\alpha=[0;a,a,a,\ldots ]$. We will prove that given any $\xi \in \mathbb{R}$, with $M=q_K$ a convergent denominator and any $q_K \ll Q_K \ll q_K$ with implied constants depending only on $\alpha$,
\[ T_N :=\sum_{1 \le m \le Q_K \log Q_K} \frac{e^{2 \pi i (N+\xi)m \alpha}}{2m \sin (\pi m \alpha)}, \quad 0 \le N < q_K \]
satisfies the central limit theorem: for any convex set $C \subseteq \mathbb{C}$,
\begin{equation}\label{TNCLT}
\frac{1}{q_K} \left| \left\{ 0 \le N <q_K \, : \, \frac{T_N}{\sigma(\alpha) \sqrt{\log q_K}} \in C \right\} \right| = \int_C \frac{e^{-|y|^2/2}}{2 \pi} \, \mathrm{d} y + O \left( \frac{(\log K)^2}{K^{1/6}} \right)
\end{equation}
with an implied constant depending only on $\alpha$.

We prove \eqref{TNCLT} in Sections \ref{ostrowskisection}--\ref{independentsection}. Throughout the proof, constants and implied constants depend only on $\alpha$; in particular, every estimate is uniform in $\xi$ and $Q_K$. We show how \eqref{TNCLT} implies Theorem \ref{2dimCLTtheorem} in Section \ref{CLTproofsection}. We mention in advance that the parameters $\xi$ and $Q_K$ will play a technical role in extending the result from $M=q_K$ (a convergent denominator) to general $M$.

\subsubsection{Ostrowski expansion and $\alpha$-expansion}\label{ostrowskisection}

Fix an integer $a \ge 1$, and let $\alpha=[0;a,a,a,\ldots]$. We have
\[ \alpha = \frac{\sqrt{a^2+4}-a}{2} \in (0,1) \qquad \textrm{and} \qquad \frac{1}{\alpha} = \frac{\sqrt{a^2+4}+a}{2} \in (a,a+1) . \]
The minimal polynomial of $\alpha$ gives the useful relation $\alpha^2+a\alpha-1=0$. By solving the recursions, we deduce that the convergents $p_k/q_k$ to $\alpha$ are
\[ p_k= \frac{\alpha}{\alpha^2+1} \left( \frac{1}{\alpha^k} - (-\alpha)^k \right) \qquad \textrm{and} \qquad q_k= \frac{\alpha}{\alpha^2+1} \left( \frac{1}{\alpha^{k+1}} - (-\alpha)^{k+1} \right) . \]
Hence $q_k \alpha - p_k=(-1)^k \alpha^{k+1}$.

The so-called Ostrowski expansion of an integer $0 \le N<q_K$ is the unique representation of the form $N=\sum_{k=0}^{K-1} b_k q_k$, where $0 \le b_0 \le a-1$ and $0 \le b_k \le a \,\, (1 \le k \le K-1)$ are integers which satisfy the extra rule that $b_k=a$ implies $b_{k-1}=0$. The Ostrowski expansion can be found with a greedy algorithm: define $b_{K-1}$ as the largest integer $b$ such that $N \ge b q_{K-1}$; this is obviously an integer in $[0,a]$. Then $0 \le N-b_{K-1}q_{K-1}<q_{K-1}$, and we iterate the same procedure. Note that if $b_{K-1}=a$, then $0 \le N -b_{K-1}q_{K-1}<q_K-aq_{K-1}=q_{K-2}$, hence $b_{K-2}=0$, explaining the extra rule.

The so-called $\alpha$-expansion of a real number $0 \le x <\alpha^{-K}$ is entirely analogous: it is the (almost) unique representation of the form $x=\sum_{k=-\infty}^{K-1} c_k \alpha^{-k}$, where $0 \le c_k \le a$ are integers which satisfy the extra rule that $c_k=a$ implies $c_{k-1}=0$. The $\alpha$-expansion can also be found with a greedy algorithm: define $c_{K-1}$ as the largest integer $c$ such that $x \ge c\alpha^{-(K-1)}$; this is obviously an integer in $[0,a]$. Then $0 \le x-c_{K-1} \alpha^{-(K-1)}<\alpha^{-(K-1)}$, and we iterate the same procedure. Note that if $c_{K-1}=a$, then $0 \le x-c_{K-1}\alpha^{-(K-1)}<\alpha^{-K}-a \alpha^{-(K-1)}=\alpha^{-(K-2)}$, hence $c_{K-2}=0$, explaining the extra rule. Exponentially fast convergence ensures that the infinite series $\sum_{k=-\infty}^{K-1} c_k \alpha^{-k}$ indeed represents $x$.

The $\alpha$-expansion can be visualized by partitioning $[0,\alpha^{-K})$ into $a+1$ intervals corresponding to the possible values of $c_{K-1}$: the first $a$ intervals are $[c\alpha^{-(K-1)}, (c+1)\alpha^{-(K-1)})$, $0 \le c \le a-1$, each of length $\alpha^{-(K-1)}$, and the last interval is $[a \alpha^{-(K-1)}, \alpha^{-K})$, which is of length $\alpha^{-K}-a\alpha^{-(K-1)} = \alpha^{-(K-2)}$. The special length of the last interval represents self-similarity, and is related to the fact that the continued fraction of $\alpha$ has period $1$. Iterating the procedure thus corresponds to a partitioning system of nested intervals. Note that the $\alpha$-expansion is not unique at the endpoints of these nested intervals, but it is in fact unique everywhere else. Since we will eventually work with a real number chosen from an interval uniformly at random, we can ignore this effect.

The following lemma constructs a simple coupling between these two expansions.
\begin{lem}\label{ostrowskilemma} For any real $x \in [0,\alpha^{-K})$, consider the $\alpha$-expansion $x=\sum_{k=-\infty}^{K-1} c_k \alpha^{-k}$, and the Ostrowski expansion $\lfloor x/(1+\alpha^2 ) \rfloor =\sum_{k=0}^{K-1} b_k q_k$. There exists a set $E \subset \mathbb{R}$ of Lebesgue measure $\lambda (E) \ll K^{-10} \alpha^{-K}$ such that for any $x \in [0,\alpha^{-K}) \backslash E$, we have $b_k=c_k$ for all $\lfloor \frac{10 \log K}{\log (1/\alpha)} \rfloor \le k \le K-1$.
\end{lem}

\begin{remark} Note that $0 \le x < \alpha^{-K}$ implies $0 \le x/(1+\alpha^2) < \alpha^{-K}/(1+\alpha^2) = q_K +O(\alpha^K)$. By adding an interval of length $\ll \alpha^K$ to $E$, we may assume that $0 \le \lfloor x/(1+\alpha^2) \rfloor < q_K$.
\end{remark}

\begin{proof}[Proof of Lemma \ref{ostrowskilemma}] Set $k_0=\lfloor \frac{10\log K}{\log (1/\alpha)} \rfloor$. Let $A$ be the set of all integers $N \in [0,q_K)$ whose Ostrowski expansion is of the form $N=\sum_{k=k_0}^{K-1} d_k q_k$, i.e.\ the first $k_0$ digits are all zero. We will show that
\[ E=(1+\alpha^2) \bigcup_{N \in A} \left( N-1,N+1 \right) \cup \left( N+q_{k_0-1}-1, N+q_{k_0-1}+1 \right) \]
satisfies the claim of the lemma, where $(1+\alpha^2) H = \{ (1+\alpha^2) y \, : \, y \in H \}$. The cardinality of $A$ is the number of legitimate Ostrowski sequences $(d_{K-1}, d_{K-2}, \ldots, d_{k_0})$ of length $K-k_0$, hence $|A| \ll q_{K-k_0} \ll \alpha^{k_0 -K} \ll K^{-10} \alpha^{-K}$. Consequently, $\lambda (E) \ll K^{-10} \alpha^{-K}$.

Now let $x \in [0,\alpha^{-K}) \backslash E$. Assume first that $0 \le c_{k_0}<a$. By the construction of the $\alpha$-expansion,
\[ \sum_{k=k_0}^{K-1} c_k \frac{\alpha^{-k}}{1+\alpha^2} \le \frac{x}{1+\alpha^2} < (c_{k_0}+1) \frac{\alpha^{-k_0}}{1+\alpha^2} + \sum_{k=k_0+1}^{K-1} c_k \frac{\alpha^{-k}}{1+\alpha^2} . \]
Replacing $\frac{\alpha^{-k}}{1+\alpha^2}$ by $q_k=\frac{\alpha^{-k}}{1+\alpha^2} +O(\alpha^k)$ introduces an error $O(\alpha^{k_0})$, therefore very roughly,
\[ \sum_{k=k_0}^{K-1} c_k q_k -\frac{1}{100} \le \frac{x}{1+\alpha^2} < (c_{k_0}+1) q_{k_0} + \sum_{k=k_0+1}^{K-1} c_k q_k +\frac{1}{100} . \]
By the assumption $x \not\in E$, this can be improved to
\[ \sum_{k=k_0}^{K-1} c_k q_k \le \frac{x}{1+\alpha^2} < (c_{k_0}+1) q_{k_0} + \sum_{k=k_0+1}^{K-1} c_k q_k . \]
The integer part thus also satisfies
\[ \sum_{k=k_0}^{K-1} c_k q_k \le \left\lfloor \frac{x}{1+\alpha^2} \right\rfloor <  (c_{k_0}+1) q_{k_0} + \sum_{k=k_0+1}^{K-1} c_k q_k , \]
showing that its Ostrowski expansion is of the form $\lfloor x/(1+\alpha^2) \rfloor =\sum_{k=0}^{k_0-1} b_k q_k +  \sum_{k=k_0}^{K-1} c_k q_k$, as claimed.

Assume next that $c_{k_0}=a$. By the construction of the $\alpha$-expansion, we now have
\[ \sum_{k=k_0}^{K-1} c_k \frac{\alpha^{-k}}{1+\alpha^2} \le \frac{x}{1+\alpha^2} < \frac{\alpha^{-k_0+1}}{1+\alpha^2} + \sum_{k=k_0}^{K-1} c_k \frac{\alpha^{-k}}{1+\alpha^2} . \]
We similarly deduce that
\[ \sum_{k=k_0}^{K-1} c_k q_k -\frac{1}{100} \le \frac{x}{1+\alpha^2} < q_{k_0-1} + \sum_{k=k_0}^{K-1} c_k q_k +\frac{1}{100}, \]
and by the assumption $x \not\in E$, that
\[ \sum_{k=k_0}^{K-1} c_k q_k \le \left\lfloor \frac{x}{1+\alpha^2} \right\rfloor < q_{k_0-1} + \sum_{k=k_0}^{K-1} c_k q_k . \]
Therefore the Ostrowski expansion is of the form $\lfloor x/(1+\alpha^2) \rfloor =\sum_{k=0}^{k_0-2} b_k q_k + \sum_{k=k_0}^{K-1} c_k q_k$, as claimed.
\end{proof}

The main advantage of the $\alpha$-expansion over the Ostrowski expansion is that the sequence of digits forms a Markov chain, which in turn leads to independence; a crucial property in the proof of the central limit theorem. More precisely, let us choose a real number $x$ from the interval $[0,\alpha^{-K})$ uniformly at random, and consider the $\alpha$-expansion $x = \sum_{k=-\infty}^{K-1} c_k \alpha^{-k}$. The digits $c_k$ are thus random variables. Using the geometric interpretation of the $\alpha$-expansion in terms of a partitioning system of nested intervals, it is not difficult to see that the sequence $c_k$ is in fact a Markov chain: the state space is the finite set $\{ 0,1,\ldots, a \}$, and the chain is in the state $c_{K-t}$ at time $t \in \mathbb{N}$. The distribution of the initial state, corresponding to the lengths of the partitioning intervals, is
\[ \alpha^{K} \lambda \left( \left\{ 0 \le x < \alpha^{-K} \, : \, c_{K-1}=c \right\} \right) = \left\{ \begin{array}{ll} \alpha & \textrm{if } 0 \le c <a, \\ \alpha^2 & \textrm{if } c=a . \end{array} \right. \]
The transition probabilities are described by the $(a+1)\times (a+1)$ matrix
\[ \left( \begin{array}{ccccc} \alpha & \alpha & \cdots & \alpha & \alpha^2 \\ \alpha & \alpha & \cdots & \alpha & \alpha^2 \\ \vdots & \vdots & \vdots & \vdots & \vdots \\ \alpha & \alpha & \cdots & \alpha & \alpha^2 \\ 1 & 0 & \cdots & 0 & 0 \end{array} \right) . \]
In other words, given any $0 \le c < a$, the probability of transitioning to a state $c'$ is $\alpha$ if $0 \le c' <a$, and $\alpha^2$ if $c'=a$. The last line of the matrix corresponds to the extra rule of $\alpha$-expansions: after the state $a$, we transition to the state $0$ with probability $1$. One readily checks that this Markov chain is irreducible and aperiodic, and that its (unique) stationary distribution vector is $(\frac{\alpha+\alpha^2}{1+\alpha^2}, \frac{\alpha}{1+\alpha^2}, \ldots, \frac{\alpha}{1+\alpha^2}, \frac{\alpha^2}{1+\alpha^2})$.

\subsubsection{The covariance matrix}

The covariance matrix of a complex-valued random variable $X=Y+iZ$ is defined as
\[ \mathrm{Cov} (X) = \left( \begin{array}{cc} \mathbb{E} (Y^2) & \mathbb{E} (YZ) \\ \mathbb{E} (YZ) & \mathbb{E} (Z^2) \end{array} \right) - \left( \begin{array}{cc} (\mathbb{E} Y)^2 & \mathbb{E} Y \mathbb{E} Z \\ \mathbb{E} Y \mathbb{E} Z & (\mathbb{E} Z)^2 \end{array} \right) . \]
Given two matrices $A,B$, the notation $A=B+O(f(K))$ means that $A_{mn}=B_{mn}+O(f(K))$ for all entries.

\begin{lem}\label{diophantinelemma2} For any $1 \le M' \le M$,
\[ \sum_{m=1}^M \frac{1}{m \| m \alpha \|} \min \left\{ \frac{1}{M' \| m \alpha \|}, 1 \right\} \ll \left( 1 + \log \frac{M}{M'} \right)^2 . \]
\end{lem}

\begin{proof} This is a slight modification of the third claim in Lemma \ref{diophantinelemma}. We have
\[ \sum_{q_{\ell} \le m < q_{\ell +1}} \frac{1}{m \| m \alpha \|} \cdot \frac{1}{M' \| m \alpha \|} \ll \frac{1}{q_{\ell} M'} \sum_{j=1}^{\infty} \frac{1}{j^2/q_{\ell +1}^2} \ll \frac{q_{\ell}}{M'} , \]
and by summing over all $\ell$ such that $q_{\ell} \le M'$, we obtain $\sum_{m=1}^{M'} \frac{1}{m \| m \alpha \|} \cdot \frac{1}{M' \| m \alpha \|} \ll 1$. There are $\ll 1+\log (M/M')$ convergent denominators $M'<q_{\ell} \le M$, and they all satisfy
\[ \sum_{q_{\ell} \le m < q_{\ell +1}} \frac{1}{m \| m \alpha \|} \min \left\{ \frac{1}{M' \| m \alpha \|} , 1 \right\} \ll \frac{1}{q_{\ell}} \sum_{j=1}^{\infty} \frac{1}{j/q_{\ell +1}} \min \left\{ \frac{1}{M' j/q_{\ell +1}} , 1 \right\} \ll \log \frac{q_{\ell +1}}{M'} . \]
Hence
\[ \sum_{m=M'+1}^M \frac{1}{m \| m \alpha \|} \min \left\{ \frac{1}{M' \| m \alpha \|} , 1 \right\} \ll \left( 1 + \log \frac{M}{M'} \right)^2 , \]
and the claim follows.
\end{proof}

\begin{lem}\label{covariancelemma} If the integer $N$ is chosen from $[0,q_K)$ uniformly at random, then $\mathbb{E} T_N = O((\log K)^2)$, and
\[ \mathrm{Cov} (T_N) = \left( \begin{array}{cc} \sigma(\alpha)^2 \log q_K & 0 \\ 0 & \sigma(\alpha)^2 \log q_K \end{array} \right) + O((\log K)^4) . \]
\end{lem}

\begin{proof} An application of Lemma \ref{diophantinelemma2} shows that the expected value is
\[ \begin{split} |\mathbb{E} T_N| &= \left| \frac{1}{q_K} \sum_{N=0}^{q_K-1} \sum_{1 \le m \le Q_K \log Q_K} \frac{e^{2 \pi i (N+\xi) m \alpha}}{2m \sin (\pi m \alpha)} \right| \ll \sum_{1 \le m \le Q_K \log Q_K} \frac{1}{m \| m \alpha \|} \min \left\{ \frac{1}{q_K \| m \alpha \|}, 1 \right\} \\ &\ll (\log K)^2, \end{split} \]
as claimed.

Repeating the steps in the estimate of $V_M$ in the proof of Proposition \ref{momentsprop} with obvious modifications, we deduce that $\mathbb{E} (\mathrm{Re} \, T_N)^2$ and $\mathbb{E} (\mathrm{Im} \, T_N)^2$ are both
\[ \sum_{m=1}^{q_K} \frac{1}{8 \pi^2 m^2 \| m \alpha \|^2} + O((\log K)^4) . \]
Formula \eqref{beckformula} shows that this is $\sigma (\alpha )^2 \log q_K + O((\log K)^4)$. Therefore the diagonal entries of $\mathrm{Cov}(T_N)$ are $\sigma (\alpha )^2 \log q_K + O((\log K)^4)$, as claimed. Next, consider
\[ \mathbb{E} (\mathrm{Re} \, T_N \mathrm{Im} \, T_N) = \frac{1}{q_K} \sum_{N=0}^{q_K-1} \sum_{1 \le m_1, m_2 \le Q_K \log Q_K} \frac{\sin (2 \pi (N+\xi)m_1 \alpha ) \cos (2 \pi (N+\xi)m_2 \alpha )}{4 m_1 m_2 \sin (\pi m_1 \alpha) \sin (\pi m_2 \alpha)} . \]
The contribution of the terms $m_1=m_2$ is
\[ \frac{1}{q_K} \sum_{N=0}^{q_K-1} \sum_{1 \le m \le Q_K \log Q_K} \frac{\sin (4 \pi (N+\xi) m \alpha)}{8m^2 \sin^2 (\pi m \alpha)} \ll \sum_{1 \le m \le Q_K \log Q_K} \frac{1}{m^2 \| m \alpha \|^2} \min \left\{ \frac{1}{q_K \| 2m \alpha \|}, 1 \right\} \ll \log K , \]
where we used the third claim in Lemma \ref{diophantinelemma}. The contribution of the terms $m_1 \neq m_2$ is
\[ \begin{split} \frac{1}{q_K} \sum_{N=0}^{q_K-1} \sum_{\substack{1 \le m_1, m_2 \le Q_K \log Q_K \\ m_1 \neq m_2}} &\frac{\sin (2 \pi (N+\xi)(m_1-m_2) \alpha) + \sin (2 \pi (N+\xi)(m_1+m_2) \alpha)}{8 m_1 m_2 \sin (\pi m_1 \alpha) \sin (\pi m_2 \alpha)} \\ \ll &\sum_{\substack{1 \le m_1, m_2 \le Q_K \log Q_K \\ m_1 \neq m_2}} \frac{1}{m_1 \| m_1 \alpha \| m_2 \| m_2 \alpha \|} \min \left\{ \frac{1}{q_K \| (m_1-m_2) \alpha \|}, 1 \right\} \\ &+\sum_{\substack{1 \le m_1, m_2 \le Q_K \log Q_K \\ m_1 \neq m_2}} \frac{1}{m_1 \| m_1 \alpha \| m_2 \| m_2 \alpha \|} \min \left\{ \frac{1}{q_K \| (m_1+m_2) \alpha \|}, 1 \right\} \\ \ll &(\log K)^4, \end{split} \]
where we used the last claim in Lemma \ref{diophantinelemma}. The off-diagonal entries of $\mathrm{Cov}(T_N)$ are thus $O((\log K)^4)$, as claimed.
\end{proof}

\subsubsection{Approximation in $L^2$}

Any $1 \le m \le Q_K \log Q_K$ has a unique Ostrowski expansion of the form $m=\sum_{k=h}^H d_k q_k$, where $0 \le h=h(m) \le H=H(m)$ are integers, and $d_h, d_H \neq 0$.
\begin{lem}\label{malphalemma} We have $\alpha^{h(m)-H(m)} \ll m \| m \alpha \| \ll \alpha^{h(m)-H(m)}$.
\end{lem}

\begin{proof} Note that $q_H \le m < q_{H+1}$, in particular, $\alpha^{-H} \ll m \ll \alpha^{-H}$. It remains to show that $\alpha^{h} \ll \| m \alpha \| \ll \alpha^{h}$. Observe that
\[ m \alpha = \sum_{k=h}^H d_k q_k \alpha \equiv \sum_{k=h}^H d_k (q_k \alpha - p_k) = \sum_{k=h}^H (-1)^k d_k \alpha^{k+1} \pmod{1} . \]
Since $0<d_h \le a$ and $0 \le d_k \le a$ for all $k$, we have
\[ d_h \alpha^{h+1} - d_{h+1} \alpha^{h+2} + d_{h+2} \alpha^{h+3} - \cdots \ge \alpha^{h+1} - a \alpha^{h+2} - a \alpha^{h+4} - \cdots = \alpha^{h+1} \left( 1 - \frac{a\alpha}{1+\alpha^2} \right) , \]
and similarly,
\[ d_h \alpha^{h+1} - d_{h+1} \alpha^{h+2} + d_{h+2} \alpha^{h+3} - \cdots \le a \alpha^{h+1} + a \alpha^{h+3} + a \alpha^{h+5} + \cdots = \frac{a \alpha^{h+1}}{1+\alpha^2} . \]
Therefore
\[ \alpha^h \ll \alpha^{h+1} \left( 1- \frac{a\alpha}{1+\alpha^2} \right) \le \| m \alpha \| \le \frac{a \alpha^{h+1}}{1+\alpha^2} \ll \alpha^h, \]
as claimed.
\end{proof}

For the rest of the proof, let
\[ k_0 :=  \left\lfloor \frac{10 \log K}{\log (1/\alpha)} \right\rfloor \textrm{ and } S := \left\{ 1 \le m \le Q_K \log Q_K \, : \, H(m)-h(m) \le k_0 \right\} . \]
Lemma \ref{malphalemma} yields the implication $m \not\in S \,\, \Rightarrow \,\, m \| m \alpha \| \gg K^{10}$. We start by showing that the terms in $T_N$ with $m \not\in S$ have a negligible contribution.
\begin{lem}\label{L2lemma1} We have
\[ \frac{1}{q_K} \sum_{N=0}^{q_K-1} \bigg| \sum_{\substack{1 \le m \le Q_K \log Q_K \\ m \not\in S}} \frac{e^{2 \pi i (N+\xi) m \alpha}}{2m \sin (\pi m \alpha)} \bigg|^2 \ll 1. \]
\end{lem}

\begin{proof} Expanding the square and considering the diagonal and off-diagonal terms separately leads to
\[ \begin{split}  &\frac{1}{q_K} \sum_{N=0}^{q_K-1} \bigg| \sum_{\substack{1 \le m \le Q_K \log Q_K \\ m \not\in S}} \frac{e^{2 \pi i (N+\xi) m \alpha}}{2m \sin (\pi m \alpha)} \bigg|^2 \\ &= \sum_{\substack{1 \le m \le Q_K \log Q_K \\ m \not\in S}} \frac{1}{4m^2 \sin^2 (\pi m \alpha)} + \frac{1}{q_K} \sum_{N=0}^{q_K-1} \sum_{\substack{1 \le m_1, m_2 \le Q_K \log Q_K \\ m_1, m_2 \not\in S \\ m_1 \neq m_2}} \frac{e^{2\pi i (N+\xi)(m_1-m_2) \alpha}}{4 m_1 m_2 \sin (\pi m_1 \alpha) \sin (\pi m_2 \alpha)} \\ &\ll \sum_{\substack{1 \le m \le Q_K \log Q_K \\ m \not\in S}} \frac{1}{m^2 \| m \alpha \|^2} + \sum_{\substack{1 \le m_1, m_2 \le Q_K \log Q_K \\ m_1, m_2 \not\in S \\ m_1 \neq m_2}} \frac{1}{m_1 \| m_1 \alpha \| m_2 \| m_2 \alpha \|} \min \left\{ \frac{1}{q_K \| (m_1-m_2) \alpha \|} ,1 \right\} \\ &\ll \frac{1}{K^{10}} \sum_{1 \le m \le Q_K \log Q_K} \frac{1}{m \| m \alpha \|} + \frac{1}{K^{10}} \sum_{1 \le m_1 \le Q_K \log Q_K} \frac{1}{m_1 \| m_1 \alpha \|} \sum_{1 \le m' \le Q_K \log Q_K} \frac{1}{q_K \| m' \alpha \|} \\ &\ll 1. \end{split} \]
Note that instead of $m_1$ and $m_2$, we summed over $m_1$ and $m' := |m_1-m_2|$, and used the first claim in Lemma \ref{diophantinelemma} combined with summation by parts.
\end{proof}

We now show that the dependence of $T_N$ on the first $k_0$ Ostrowski digits of $N$ is very weak.
\begin{lem}\label{L2lemma2} For any integer $0 \le N <q_K$ with Ostrowski expansion $N=\sum_{k=0}^{K-1} b_k q_k$, let $g(N)=\sum_{k=k_0}^{K-1} b_k q_k$. We have
\[ T_N = \sum_{m \in S} \frac{e^{2 \pi i (g(N)+\xi ) m \alpha}}{2m \sin (\pi m \alpha)} + Z_N , \qquad 0 \le N < q_K \]
with some $Z_N$ that satisfies $q_K^{-1} \sum_{N=0}^{q_K-1} |Z_N|^2 \ll (\log K)^4$.
\end{lem}

\begin{proof} The error term $Z_N$ in the lemma can be written in the form
\[ \begin{split} Z_N := &T_N -  \sum_{m \in S} \frac{e^{2 \pi i (g(N)+\xi) m \alpha}}{2m \sin (\pi m \alpha)} \\ = &\sum_{\substack{1 \le m \le Q_K \log Q_K \\ m \not\in S}} \frac{e^{2 \pi i (N+\xi) m \alpha}}{2m \sin (\pi m \alpha)} + \sum_{m \in S} \frac{(e^{2 \pi i r(N) m \alpha}-1)e^{2 \pi i (g(N)+\xi) m \alpha}}{2m \sin (\pi m \alpha)} , \end{split} \]
where $r(N):=N-g(N)$. By the triangle inequality for the $L^2$ norm and Lemma \ref{L2lemma1}, it will be enough to estimate the second moment of the second term, that is,
\[ \frac{1}{q_K} \sum_{N=0}^{q_K-1} \bigg| \sum_{m \in S} \frac{(e^{2 \pi i r(N) m \alpha}-1)e^{2\pi i (g(N)+\xi ) m \alpha}}{2m \sin (\pi m \alpha)} \bigg|^2 = R_{\mathrm{diag}}+R_{\mathrm{off-diag}}, \]
where
\[ \begin{split} R_{\mathrm{diag}} &= \frac{1}{q_K} \sum_{N=0}^{q_K-1} \sum_{m \in S} \frac{|e^{2 \pi i r(N)m \alpha}-1|^2}{4 m^2 \sin^2 (\pi m \alpha)}, \\ R_{\mathrm{off-diag}} &= \frac{1}{q_K} \sum_{N=0}^{q_K-1} \sum_{\substack{m_1, m_2 \in S \\ m_1 \neq m_2}} \frac{(e^{2 \pi i r(N)m_1 \alpha}-1)(e^{-2 \pi i r(N)m_2 \alpha}-1) e^{2 \pi i (g(N)+\xi ) (m_1-m_2) \alpha} }{4 m_1 m_2 \sin (\pi m_1 \alpha) \sin (\pi m_2 \alpha)} . \end{split} \]
We thus need to show that $R_{\mathrm{diag}} + R_{\mathrm{off-diag}} \ll (\log K)^4$. Clearly, $r(N) = \sum_{k=0}^{k_0-1} b_k q_k < q_{k_0} \ll K^{10}$. Therefore
\[ \sum_{m \in S} \frac{|e^{2 \pi i r(N)m \alpha}-1|^2}{4 m^2 \sin^2 (\pi m \alpha)} \ll \sum_{1 \le m \le K^{20}} \frac{1}{m^2 \| m \alpha \|^2} + \sum_{K^{20} < m \le Q_K \log Q_K} \frac{r(N)^2}{m^2} \ll \log K \]
uniformly in $0 \le N < q_K$, where we used formula \eqref{m2malpha2sum}. This shows that $R_{\mathrm{diag}} \ll \log K$.

We now estimate $R_{\mathrm{off-diag}}$. Let $A(N_0)= \{ 0 \le N < q_K \, : \, r(N)=N_0 \}$. Note that $g(N)=N-N_0$ for all $N \in A(N_0)$. For the sake of readability, set
\[ W(N_0,m_1,m_2):= (e^{2 \pi i N_0 m_1 \alpha}-1)(e^{-2 \pi i N_0 m_2 \alpha}-1) e^{2 \pi i (-N_0+\xi) (m_1-m_2) \alpha}. \]
Then $R_{\mathrm{off-diag}}$ can be written in the form
\[ R_{\mathrm{off-diag}} = \frac{1}{q_K} \sum_{0 \le N_0 \ll K^{10}} \sum_{\substack{m_1, m_2 \in S \\ m_1 \neq m_2}} \frac{W(N_0,m_1,m_2)}{4m_1 m_2 \sin (\pi m_1 \alpha) \sin (\pi m_2 \alpha)} \sum_{N \in A(N_0)} e^{2 \pi i N (m_1-m_2) \alpha} . \]
Clearly $|W(N_0,m_1,m_2)| \le 4$, and also $|W(N_0,m_1,m_2)| \ll \| m_1 \alpha \| \| m_2 \alpha \| N_0^2$. Let
\[ D(N_0)= \bigg\{ 1 \le d \le Q_K \log Q_K \, : \, \bigg| \sum_{N \in A(N_0)} e^{2 \pi i N d \alpha} \bigg| \ge \frac{q_K}{K^{14}} \bigg\} . \]
The contribution of the terms for which $|m_1-m_2| \not\in D(N_0)$ is negligible:
\begin{multline*}
\bigg| \sum_{\substack{m_1, m_2 \in S \\ 0<|m_1-m_2| \not\in D(N_0)}} \frac{W(N_0,m_1,m_2)}{4m_1 m_2 \sin (\pi m_1 \alpha) \sin (\pi m_2 \alpha)} \sum_{N \in A(N_0)} e^{2 \pi i N (m_1-m_2) \alpha} \bigg| \\ \ll \sum_{1 \le m_1, m_2 \le Q_K \log Q_K} \frac{1}{m_1 \| m_1 \alpha \| m_2 \| m_2 \alpha \|} \cdot \frac{q_K}{K^{14}} \ll \frac{q_K}{K^{10}} .
\end{multline*}
The contribution of the terms for which $|m_1-m_2|=d \in D(N_0)$ is
\begin{multline*}
\bigg| \sum_{\substack{m_1, m_2 \in S \\ |m_1-m_2|=d}} \frac{W(N_0,m_1,m_2)}{4m_1 m_2 \sin (\pi m_1 \alpha) \sin (\pi m_2 \alpha)} \sum_{N \in A(N_0)} e^{2 \pi i N (m_1-m_2) \alpha} \bigg| \\ \ll |A(N_0)| \sum_{\substack{1 \le m_1, m_2 \le Q_K \log Q_K \\ |m_1-m_2|=d}} \frac{|W(N_0,m_1,m_2)|}{m_1 \| m_1 \alpha \| m_2 \| m_2 \alpha \|} .
\end{multline*}
By symmetry it is enough to sum over $m_1<m_2$, in which case $m_2=m_1+d$. Hence
\[ R_{\mathrm{off-diag}} \ll \frac{1}{q_K} \sum_{0 \le N_0 \ll K^{10}} \left( \frac{q_K}{K^{10}} + |A(N_0)| \sum_{d \in D(N_0)} \sum_{1 \le m \le Q_K \log Q_K} \frac{|W(N_0,m,m+d)|}{m \| m \alpha \| (m+d) \| (m+d) \alpha \|} \right) . \]
Here
\[ \sum_{1 \le m \le Q_K \log Q_K} \frac{|W(N_0,m,m+d)|}{m \| m \alpha \| (m+d) \| (m+d) \alpha \|} \ll \sum_{m=1}^{\infty} \frac{N_0^2}{m (m+d)} \ll K^{20} \frac{\log d}{d}, \]
which is negligible if, say, $d > K^{50}$. For smaller $d$, the contribution of all $m > K^{50}$ is also negligible:
\[ \sum_{K^{50} < m \le Q_K \log Q_K} \frac{|W(N_0,m,m+d)|}{m \| m \alpha \| (m+d) \| (m+d) \alpha \|} \ll \sum_{K^{50} < m \le Q_K \log Q_K} \frac{N_0^2}{m (m+d)} \ll \frac{1}{K^{30}} . \]
Therefore our estimate for $R_{\mathrm{off-diag}}$ simplifies to
\[ \begin{split} R_{\mathrm{off-diag}} &\ll 1 + \frac{1}{q_K} \sum_{0 \le N_0 \ll K^{10}} |A(N_0)| \left( \frac{\log K}{K^{30}} |D(N_0)| + \sum_{1 \le m, d \le K^{50}} \frac{1}{m \| m \alpha \| (m+d) \| (m+d) \alpha \|} \right) \\ &\ll 1+\frac{1}{q_K} \sum_{0 \le N_0 \ll K^{10}} |A(N_0)| \left( \frac{\log K}{K^{30}} |D(N_0)| + (\log K)^4 \right) \\ &\ll \frac{\log K}{K^{30}} \max_{0 \le N_0 \ll K^{10}} |D(N_0)| + (\log K)^4. \end{split} \]
In the last step we used $\sum_{N_0 \ge 0} |A(N_0)|=q_K$. It remains to estimate $|D(N_0)|$. For any $N_0 \ge 0$,
\[ \begin{split} \sum_{1 \le d \le Q_K \log Q_K} \bigg| \sum_{N \in A(N_0)} e^{2 \pi i N d \alpha} \bigg|^2 &= \sum_{1 \le d \le Q_K \log Q_K} \bigg( |A(N_0)| + \sum_{\substack{N, N' \in A(N_0) \\ N \neq N'}} e^{2 \pi i (N-N')d \alpha} \bigg) \\ &\ll |A(N_0)| q_K \log q_K + \sum_{\substack{N,N' \in A(N_0) \\ N \neq N'}} \frac{1}{\| (N-N') \alpha \|} \\ &\ll q_K^2 K + q_K \sum_{\ell =1}^{q_K-1} \frac{1}{\| \ell \alpha \|} \\ &\ll q_K^2 K. \end{split} \]
An application of the Chebyshev inequality thus yields $|D(N_0)| \ll K^{29}$. Therefore $R_{\mathrm{off-diag}} \ll (\log K)^4$, as claimed.
\end{proof}

\subsubsection{Approximation by a Markov chain}\label{markovchainsection}

The main goal of this section is to approximate $T_N$ in terms of the Markov chain formed by the digits in the $\alpha$-expansion of a real number. Throughout, the integer $N$ is chosen from $[0,q_K)$ uniformly at random, whereas the real $x$ is chosen from $[0,\alpha^{-K})$ uniformly at random and has $\alpha$-expansion $x=\sum_{k=-\infty}^{K-1} c_k \alpha^{-k}$.

The Ostrowski expansion of any $1 \le m \le Q_K \log Q_K$ is of the form $m=\sum_{k=h}^H d_k q_k$ with some integers $0 \le h=h(m) \le H=H(m)$ and some digits $d_k=d_k(m)$, where $d_h, d_H \neq 0$. We use the convention $d_k=d_k(m)=0$ whenever $k \not\in [h,H]$. Let
\[ G(x,m) = \frac{\alpha}{1+\alpha^2} \left( \sum_{k=k_0}^{K-1} (-1)^k c_k d_k + \sum_{j=1}^{k_0} \alpha^j \sum_{k=k_0}^{K-1} (-1)^{k+j} c_k d_{k+j} + \sum_{j=1}^{k_0} \alpha^j \sum_{k=k_0}^{K-1} (-1)^k c_k d_{k-j} \right) , \]
and define
\[ U(x) = \sum_{m \in S} \frac{e^{2 \pi i (G(x,m) + \xi m \alpha )}}{2 m \sin (\pi m \alpha)} . \]
The point is that for a given $m \in S$, the function $G(x,m)$ depends only on $c_k$, $k \in [h-k_0,H+k_0]$, that is, only on $\ll k_0 \ll \log K$ of all the variables $c_k$, $k_0 \le k \le K-1$.

For the sake of readability, from now on we use fully probabilistic notation. We thus write
\[ \begin{split} \Pr \left( T_N \in A \right) &= \frac{1}{q_K} \left| \left\{ 0 \le N < q_K \, : \, T_N \in A \right\} \right| , \\ \Pr \left( U(x) \in A \right) &= \alpha^K \lambda \left( \left\{ 0 \le x < \alpha^{-K} \, : \, U(x) \in A \right\} \right) \end{split} \]
for a Lebesgue measurable set $A \subseteq \mathbb{C}$. Further, let
\[ \Sigma_K = \left( \begin{array}{cc} \sigma (\alpha )^2 \log q_K & 0 \\ 0 & \sigma (\alpha )^2 \log q_K \end{array} \right) . \]
The following lemma reduces the central limit theorem for $T_N$ to that for $U(x)$.
\begin{lem}\label{UxCLTlemma} We have $\mathbb{E} U(x)=O((\log K)^2)$ and $\mathrm{Cov} (U(x)) = \Sigma_K + O(K^{1/2}(\log K)^2)$. Further,
\[ \sup_{C \subseteq \mathbb{C}} \left| \Pr \left( \frac{T_N}{\sigma(\alpha) \sqrt{\log q_K}} \in C \right) - \Phi (C) \right| = \sup_{C \subseteq \mathbb{C}} \left| \Pr \left( \frac{U(x)}{\sigma (\alpha) \sqrt{\log q_K}} \in C \right) - \Phi (C) \right| + O \left( \frac{(\log K)^{4/3}}{K^{1/3}} \right) , \]
where the supremum is over all convex sets $C \subseteq \mathbb{C}$, and $\Phi (C) = \int_{C} \frac{e^{-|y|^2 /2}}{2 \pi} \, \mathrm{d}y$.
\end{lem}

\begin{proof} Let $N_x= \lfloor x/(1+\alpha^2) \rfloor$. Since $0 \le x/(1+\alpha^2) < \alpha^{-K}/(1+\alpha^2) = q_K +O(\alpha^K)$, the random variable $N_x$ is supported on either $[0,q_K)$ or $[0,q_K]$. For any $0 \le n \le q_K-2$ we have
\[ \Pr (N_x=n)=\alpha^K (1+\alpha^2) = q_K^{-1}+O(\alpha^{3K})=\Pr (N=n)+O(\alpha^{3K}), \]
and for $n \in \{ q_K-1, q_K \}$ we still have $\Pr (N_x=n)=\Pr (N=n)+O(\alpha^{2K})$. Therefore the distributions of $N_x$ and $N$ have exponentially small distance in total variation:
\begin{equation}\label{TV}
\sum_{n=0}^{\infty} |\Pr (N_x=n) - \Pr (N=n)| \ll \alpha^{2K} .
\end{equation}
Using the pointwise bound
\begin{equation}\label{pointwisebound}
|T_N| \ll \sum_{1 \le m \le Q_K \log Q_K} \frac{1}{m \| m \alpha \|} \ll (\log Q_K)^2 \ll K^2
\end{equation}
and the estimate \eqref{TV}, we deduce $\mathbb{E} T_{N_x} = \mathbb{E} T_N+O(K^2 \alpha^{2K})$ and $\mathrm{Cov}(T_{N_x}) = \mathrm{Cov} (T_N) +O(K^4 \alpha^{2K})$. Hence by Lemma \ref{covariancelemma}, $\mathbb{E} T_{N_x}=O((\log K)^2)$ and $\mathrm{Cov} (T_{N_x}) = \Sigma_K +O((\log K)^4)$.

Recall the definition of $g$ and $Z_N$ from Lemma \ref{L2lemma2}. We similarly deduce $\mathbb{E} |Z_{N_x}|^2 = \mathbb{E} |Z_N|^2 + O(K^4 \alpha^{2K})=O((\log K)^4)$. Therefore
\[ U_1(x) := \sum_{m \in S} \frac{\exp \left( 2 \pi i \left( g(N_x)+\xi \right) m \alpha \right)}{2 m \sin (\pi m \alpha)} = T_{N_x}-Z_{N_x} \]
satisfies $\mathbb{E} U_1(x)=O((\log K)^2)$ and $\mathrm{Cov} (U_1(x)) = \Sigma_K + O(K^{1/2} (\log K)^2)$. Using the coupling between the Ostrowski expansion and the $\alpha$-expansion constructed in Lemma \ref{ostrowskilemma}, here $g(N_x)=\sum_{k=k_0}^{K-1} c_k q_k$ with probability $\ge 1-O(K^{-10})$. This together with a pointwise bound of the form \eqref{pointwisebound} yields that
\[ U_2(x) := \sum_{m \in S} \frac{\exp \left( 2 \pi i \left( \sum_{k=k_0}^{K-1} c_k q_k + \xi \right) m \alpha \right)}{2 m \sin (\pi m \alpha)} \]
also satisfies $\mathbb{E}U_2(x)=O((\log K)^2)$ and $\mathrm{Cov} (U_2(x)) = \Sigma_K +O(K^{1/2} (\log K)^2)$.

Observe that
\[ \sum_{k=k_0}^{K-1} c_k q_k m \alpha = \sum_{k=k_0}^{K-1} \sum_{\ell=0}^{\infty} c_k q_k d_{\ell} q_{\ell} \alpha = \sum_{k=k_0}^{K-1} c_k \left( d_k q_k^2 \alpha + \sum_{j=1}^{\infty} d_{k+j} q_k q_{k+j} \alpha + \sum_{j=1}^{k} d_{k-j} q_k q_{k-j} \alpha \right) . \]
Working modulo $1$, here
\[ \begin{split} q_k^2 \alpha &\equiv q_k (-1)^k \alpha^{k+1} = \frac{\alpha}{1+\alpha^2}(-1)^k +O(\alpha^{2k}) \pmod{1},  \\ q_k q_{k+j} \alpha &\equiv q_k (-1)^{k+j} \alpha^{k+j+1} = \frac{\alpha}{1+\alpha^2} (-1)^{k+j} \alpha^j + O(\alpha^{2k+j}) \pmod{1}, \\ q_k q_{k-j} \alpha & \equiv q_{k-j} (-1)^k \alpha^{k+1} = \frac{\alpha}{1+\alpha^2} (-1)^k \alpha^j + O(\alpha^{2k-j}) \pmod{1}, \end{split} \]
hence
\[ \sum_{k=k_0}^{K-1} c_k q_k m \alpha \equiv \frac{\alpha}{1+\alpha^2} \sum_{k=k_0}^{K-1} (-1)^k c_k \left( d_k + \sum_{j=1}^{\infty} (- \alpha)^j d_{k+j} + \sum_{j=1}^{k} \alpha^j d_{k-j} \right) +O(\alpha^{k_0}) \pmod{1} . \]
Discarding the terms $j>k_0$ from both sums in the previous formula introduces the error $\sum_{k=k_0}^{K-1} \sum_{j=k_0+1}^{\infty} \alpha^j \ll K \alpha^{k_0} \ll K^{-9}$, thus $\sum_{k=k_0}^{K-1} c_k q_k m \alpha \equiv G(x,m) + O(K^{-9}) \pmod{1}$. In particular,
\begin{equation}\label{U2-U}
|U_2(x)-U(x)| \ll \sum_{1 \le m \le Q_K \log Q_K} \frac{K^{-9}}{m \| m \alpha \|} \ll K^{-7}.
\end{equation}
Therefore $\mathbb{E} U(x) = O((\log K)^2)$ and $\mathrm{Cov} (U(x)) = \Sigma_K + O(K^{1/2} (\log K)^2)$, as claimed.

An application of the Chebyshev inequality yields
\[ \Pr \left( \frac{|Z_{N_x}|}{\sigma (\alpha) \sqrt{\log q_K}} \ge \frac{(\log K)^{4/3}}{K^{1/3}} \right) \ll \frac{(\log K)^{4/3}}{K^{1/3}} . \]
Formulas \eqref{TV} and \eqref{U2-U} together with $\Pr (U_1(x) \neq U_2(x)) \ll K^{-10}$ show that for any of the random variables $Y \in \{ T_N, T_{N_x}, U_1(x), U_2(x), U(x) \}$,
\[ \sup_{C \subseteq \mathbb{C}} \left| \Pr \left( \frac{Y}{\sigma (\alpha) \sqrt{\log q_K}} \in C \right) - \Phi(C) \right| \]
is the same up to an error $O(K^{-1/3} (\log K)^{4/3})$. Note that we tacitly used the fact that with the notation $C^{\varepsilon}:= \{ y \in \mathbb{R}^2 \, : \, \mathrm{dist} (y, \partial C) \le \varepsilon \}$, we have $\Phi (C^{\varepsilon}) \ll \varepsilon$ uniformly in convex sets $C \subseteq \mathbb{C}$ \cite[p.\ 24]{BRR}. This finishes the proof of the second claim of the lemma.
\end{proof}

From now on we can thus work with the random variable $U(x)$, which is expressed in terms of the Markov chain $c_{K-1}, c_{K-2}, \ldots$, discussed in Section \ref{ostrowskisection}. By classical results on finite state space irreducible, aperiodic Markov chains, $c_{K-j}$ converges to the stationary distribution exponentially fast as $j \to \infty$. It is also not difficult to see that the sequence $c_{K-1}, c_{K-2}, \ldots$ is $\psi$-mixing with exponential rate. That is, for any integers $j,\ell \ge 1$, letting $\mathcal{B}_j$ denote the $\sigma$-algebra generated by $c_{K-1}, c_{K-2}, \ldots, c_{K-j}$ and $\mathcal{F}_{j+\ell}$ the $\sigma$-algebra generated by $c_{K-j-\ell}, c_{K-j-\ell-1},\ldots$, the $\psi$-mixing coefficients satisfy
\begin{equation}\label{psimixing}
\psi(\ell) := \sup_{j \ge 1} \sup_{\substack{B \in \mathcal{B}_j, \,\,\, \Pr (B)>0 \\ F \in \mathcal{F}_{j+\ell}, \,\,\, \Pr (F)>0}} \bigg| \frac{\Pr (B \cap F)}{\Pr(B) \Pr (F)} -1 \bigg| \ll e^{-\tau \ell}
\end{equation}
with some constant $\tau >0$. In particular (cf.\ $\rho$-mixing), if $f(x)$ is a $\mathcal{B}_j$-measurable and $g(x)$ is an $\mathcal{F}_{j+\ell}$-measurable complex-valued function, then
\begin{equation}\label{rhomixing}
\mathbb{E} (f(x)g(x)) = \mathbb{E} f(x) \mathbb{E} g(x)+ O \left( \left( \mathbb{E} |f(x)|^2 \right)^{1/2} \left( \mathbb{E} |g(x)|^2 \right)^{1/2}  e^{-\tau \ell} \right) .
\end{equation}
We refer to \cite{BR} for a survey on mixing properties of Markov chains.

Since $\psi$-mixing is usually stated only for stationary Markov chains, but our chain $c_{K-1}, c_{K-2}, \ldots$ is not started from the stationary distribution, we include a short proof of \eqref{psimixing}. Fix $j, \ell \ge 1$, and assume first that $B \in \mathcal{B}_j$ and $F \in \mathcal{F}_{j+\ell}$ are the events
\[ \begin{split} B&=\{ c_{K-1} = x_{K-1}, \,\, c_{K-2} = x_{K-2}, \ldots, c_{K-j}=x_{K-j} \} , \\ F&= \{ c_{K-j-\ell} = x_{K-j-\ell}, \,\, c_{K-j-\ell -1} = x_{K-j-\ell -1}, \ldots \} \end{split} \]
with some $x_{K-1}, x_{K-2}, \ldots \in \{ 0,1,\ldots, a \}$. Using the Markov property,
\[ \begin{split} \Pr (B \cap F) &= \Pr (F \mid c_{K-j} = x_{K-j}) \Pr (B) \\ &= \Pr (F \mid c_{K-j-\ell}=x_{K-j-\ell}) \Pr (c_{K-j-\ell} = x_{K-j-\ell} \mid c_{K-j} = x_{K-j}) \Pr (B) \\ &= \frac{\Pr (F)}{\Pr (c_{K-j-\ell} = x_{K-j-\ell})} \Pr (c_{K-j-\ell} = x_{K-j-\ell} \mid c_{K-j} = x_{K-j}) \Pr (B) . \end{split} \]
Hence
\[ \left| \frac{\Pr (B \cap F)}{\Pr (B) \Pr (F)} -1 \right| = \left| \frac{\Pr (c_{K-j-\ell} = x_{K-j-\ell} \mid c_{K-j} = x_{K-j})}{\Pr (c_{K-j-\ell} = x_{K-j-\ell})} -1  \right| . \]
Here $\Pr (c_{K-j-\ell} = x_{K-j-\ell} \mid c_{K-j} = x_{K-j})$ and $\Pr (c_{K-j-\ell} = x_{K-j-\ell})$ are both exponentially close in $\ell$ to the stationary distribution at $x_{K-j-\ell}$, thus
\[ |\Pr (B \cap F) - \Pr (B) \Pr (F)| \ll e^{-\tau \ell} \Pr (B) \Pr (F) \]
uniformly in $j \ge 1$, with some constant $\tau >0$. The same holds for any $B \in \mathcal{B}_j$ and $F \in \mathcal{F}_{j+\ell}$ by $\sigma$-additivity. This finishes the proof of \eqref{psimixing}.

\subsubsection{Approximation by independent variables}\label{independentsection}

In this section we approximate $U(x)$ by a sum of independent random variables, and thus establish \eqref{TNCLT}.

Let $1 \le R \le K^{0.99}$ be an integer, to be chosen, and let us divide $[0,K]$ into $R$ subintervals $I_j=[(j-1)K/R,jK/R]$, $1 \le j \le R$ of equal length $K/R \ge K^{1/100}$. Set also $I_0=(-\infty,0]$ and $I_{R+1}=[K,\infty)$. Let $b>1$ be a large constant, to be chosen, and for any $1 \le j \le R+1$, define
\[ \begin{split} S_j&= \left\{ m \in S \, : \, h(m)- b k_0 \in I_j, \,\,\, H(m)+ b k_0 \in I_j \right\}, \\ S_j'&= \left\{ m \in S \, : \, h(m)-bk_0 \in I_{j-1}, \,\,\, H(m)+b k_0 \in I_j \right\} . \end{split} \]
Note that the length of the interval $[h(m)-bk_0,H(m)+bk_0]$ is negligible compared to $K/R$, therefore the sets $S_j,S_j'$ partition $S$. Since $h(m) \le K+O(\log K)$ for all $1 \le m \le Q_K \log Q_K$, we actually have $S_{R+1}=\emptyset$ provided that $b>1$ is chosen large enough. Consequently, we can express $U(x)$ as $U(x)=\sum_{j=1}^R X_j(x) + \sum_{j=1}^{R+1} Y_j(x)$, and in particular,
\[ U(x)-\mathbb{E} U(x) = \sum_{j=1}^R (X_j (x) - \mathbb{E} X_j (x)) + \sum_{j=1}^{R+1} (Y_j (x) - \mathbb{E} Y_j (x)) , \]
with the random variables
\[ X_j(x) := \sum_{m \in S_j} \frac{e^{2 \pi i (G(x,m)+\xi m \alpha)}}{2m \sin (\pi m \alpha)} \qquad \textrm{and} \qquad Y_j(x) := \sum_{m \in S_j'} \frac{e^{2 \pi i (G(x,m)+ \xi m \alpha)}}{2m \sin (\pi m \alpha)} . \]
We first show that for all $j$,
\begin{equation}\label{XjYjbound}
|X_j(x)| \ll \frac{K \log K}{R} \qquad \textrm{and} \qquad |Y_j(x)| \ll (\log K)^2 .
\end{equation}
Observe that for any given $0 \le h \le H$,
\[ \bigg| \sum_{\substack{m \in S\\ h(m)=h, \,\,\, H(m)=H}} \frac{e^{2 \pi i (G(x,m)+\xi m \alpha)}}{2m \sin (\pi m \alpha)} \bigg| \ll \sum_{\substack{m \in S\\ h(m)=h, \,\,\, H(m)=H}} \frac{1}{m \| m \alpha \|} \ll 1. \]
Indeed, by Lemma \ref{malphalemma} here each term is $\ll \alpha^{H-h}$, and the number of terms is equal to the number of legitimate Ostrowski sequences $(d_h,d_{h+1}, \ldots, d_H)$, which is $\ll q_{H-h} \ll \alpha^{h-H}$. Summing over $h,H$ in the appropriate range leads to \eqref{XjYjbound}.

The main observation is that $X_j(x)$ depends only on $c_k$, $\min I_j+(b-1)k_0 \le k \le \max I_j - (b-1)k_0$. Similarly, $Y_j(x)$ depends only on $c_k$, $\min I_j -(b+2)k_0 \le k \le \min I_j +(b+2)k_0$.

For any $1 \le j < j' \le R+1$, the functions $Y_j (x)$ resp.\ $Y_{j'}(x)$ depend on sets of variables $c_k$ with the indices $k$ separated by at least $K/R - 2(b+2)k_0 \ge K^{1/100}/2$. By \eqref{rhomixing} and \eqref{XjYjbound}, we have
\[ \mathbb{E} \left( Y_j (x) - \mathbb{E}Y_j (x) \right) \left( \overline{Y_{j'} (x)} - \mathbb{E}\overline{Y_{j'} (x)} \right) \ll (\log K)^4 e^{-\tau K^{1/100}/2} \ll K^{-2} . \]
Hence
\begin{equation}\label{Yjvariancebound}
\mathbb{E} \left| \sum_{j=1}^{R+1} (Y_j (x) - \mathbb{E} Y_j (x)) \right|^2 \ll \sum_{j=1}^{R+1} \mathbb{E} |Y_j (x) - \mathbb{E} Y_j (x)|^2  +1 \ll R (\log K)^4 ,
\end{equation}
and by Lemma \ref{UxCLTlemma},
\begin{equation}\label{Xjvariance}
\begin{split} \mathrm{Cov} \left( \sum_{j=1}^R (X_j (x) - \mathbb{E} X_j (x)) \right) &= \mathrm{Cov} (U(x)) + O \left( K^{1/2} R^{1/2} (\log K)^2 \right) \\ &= \Sigma_K + O \left( K^{1/2} R^{1/2} (\log K)^2 \right) . \end{split}
\end{equation}

The main contribution to $U(x)$ is the sum of $X_j(x)$. For any $2 \le j \le R$, the vector $(X_1 (x), X_2 (x), \ldots, X_{j-1} (x))$ resp.\ the variable $X_j (x)$ depend on sets of variables $c_k$ with the indices $k$ separated by at least $(2b-2) k_0$. By an approximation theorem of Berkes and Philipp \cite[Theorem 2]{BP}, on a suitable probability space there exist complex-valued random variables $X_j^*$ and $\zeta_j$, $1 \le j \le R$, with $\zeta_j$, $1 \le j \le R$ independent, such that the vectors $(X_1 (x), X_2(x), \ldots, X_R (x))$ and $(X_1^*, X_2^*, \ldots, X_R^*)$ are identically distributed, $X_j^*$ and $\zeta_j$ are also identically distributed for any given $1 \le j \le R$, and $\Pr (|X_j^* - \zeta_j| \ge 6 \psi ((2b-2)k_0)) \le 6 \psi ((2b-2)k_0)$, where $\psi$ denotes the $\psi$-mixing coefficients from \eqref{psimixing}. Here $\psi ((2b-2)k_0) \ll K^{-100}$ provided that $b>1$ is chosen large enough, thus
\[ \Pr \left( \sum_{j=1}^R |X_j^* - \zeta_j| \ge K^{-99} \right) \ll K^{-99} . \]
By \eqref{XjYjbound}, we have $|X_j^*|, |\zeta_j| \ll (K \log K) /R$, and the previous formula yields in particular that $\mathbb{E} |\sum_{j=1}^R (X_j^*-\zeta_j)|^2 \ll K^{-96}$. Formula \eqref{Xjvariance} thus leads to
\[ \mathrm{Cov} \left( \sum_{j=1}^R (\zeta_j - \mathbb{E} \zeta_j) \right) = \Sigma_K + O(K^{1/2} R^{1/2} (\log K)^2) . \]
The random variables $(\zeta_j-\mathbb{E}\zeta_j)/(\sigma (\alpha) \sqrt{\log q_K})$, $1 \le j \le R$ are thus independent, mean zero, and
\[ \Sigma_{*} := \mathrm{Cov} \left( \frac{\sum_{j=1}^R (\zeta_j - \mathbb{E} \zeta_j)}{\sigma (\alpha) \sqrt{\log q_K}} \right) = \left( \begin{array}{cc} 1 & 0 \\ 0 & 1 \end{array} \right) + O \left( K^{-1/2} R^{1/2} (\log K)^2 \right) . \]
Further,
\[ \sum_{j=1}^R \mathbb{E} \left| \frac{\zeta_j - \mathbb{E} \zeta_j}{\sigma (\alpha) \sqrt{\log q_K}} \right|^3 \ll \frac{K^{1/2} \log K}{R} \sum_{j=1}^R \mathbb{E} \left| \frac{\zeta_j - \mathbb{E} \zeta_j}{\sigma (\alpha) \sqrt{\log q_K}} \right|^2 \ll \frac{K^{1/2} \log K}{R} . \]
The multidimensional central limit theorem with explicit rate \cite[p.\ 166]{BRR} thus gives
\[ \sup_{C \subseteq \mathbb{C}} \left| \Pr \left( \frac{\sum_{j=1}^R (\zeta_j - \mathbb{E} \zeta_j)}{\sigma (\alpha) \sqrt{\log q_K}} \in C \right) - \Phi_{\Sigma_{*}}(C) \right| \ll \sum_{j=1}^R \mathbb{E} \left| \frac{\zeta_j - \mathbb{E} \zeta_j}{\sigma (\alpha) \sqrt{\log q_K}} \right|^3 \ll \frac{K^{1/2} \log K}{R} , \]
where $\Phi_{\Sigma_{*}}(C)=\int_C e^{-\langle y,\Sigma_{*}^{-1} y \rangle/2}/(2 \pi \det (\Sigma_{*})^{1/2}) \, \mathrm{d} y$ is the mean zero Gaussian with covariance matrix $\Sigma_{*}$. It is easy to see that $\Phi_{\Sigma_{*}}(C) = \Phi (C) +O(K^{-1/2}R^{1/2} (\log K)^2)$ uniformly in convex sets $C \subseteq \mathbb{C}$, hence
\[ \sup_{C \subseteq \mathbb{C}} \left| \Pr \left( \frac{\sum_{j=1}^R (X_j (x) - \mathbb{E} X_j (x))}{\sigma (\alpha) \sqrt{\log q_K}} \in C \right) - \Phi (C) \right| \ll \frac{K^{1/2} \log K}{R} + \frac{R^{1/2} (\log K)^2}{K^{1/2}} . \]

Similarly, $Y_j (x)$, $1 \le j \le R+1$ can be extremely well approximated by independent random variables $\zeta_j'$, $1 \le j \le R+1$ which, by \eqref{XjYjbound}, satisfy $|\zeta_j'| \ll (\log K)^2$. Applying an exponential bound (e.g.\ Hoeffding's inequality) to $\zeta_j'$ with a large enough constant $\rho >1$ leads to
\begin{multline*}
\Pr \bigg( \bigg| \sum_{j=1}^{R+1} (Y_j (x) - \mathbb{E} Y_j (x)) \bigg| \ge \rho R^{1/2} (\log K)^{5/2} \bigg) \\ \ll \Pr \left( \left| \sum_{j=1}^{R+1} (\zeta_j'-\mathbb{E} \zeta_j') \right| \ge \frac{\rho}{2} R^{1/2} (\log K)^{5/2} \right) + K^{-100} \ll K^{-100} .
\end{multline*}
The previous two formulas, together with the fact that $\mathbb{E}U(x)=O((\log K)^2)$ is negligible, give
\[ \sup_{C \subseteq \mathbb{C}} \left| \Pr \left( \frac{U(x)}{\sigma(\alpha) \sqrt{\log q_K}} \in C \right) - \Phi (C) \right| \ll \frac{K^{1/2} \log K}{R} + \frac{R^{1/2} (\log K)^{5/2}}{K^{1/2}} . \]
The optimal choice is $R= \lfloor K^{2/3}/\log K \rfloor$, in which case the error term is $K^{-1/6} (\log K)^2$. This finishes the proof of \eqref{TNCLT}.

\subsubsection{Completing the proof}\label{CLTproofsection}

\begin{proof}[Proof of Theorem \ref{2dimCLTtheorem}] Let $M \ge 3$ be an integer with Ostrowski expansion $M=\sum_{k=0}^{K-1} b_k q_k$, $b_{K-1} \neq 0$, and let $M^*=\sum_{k=K-k_0}^{K-1} b_k q_k$, where $k_0=\lfloor \frac{10 \log K}{\log (1/\alpha)} \rfloor$, as before. Consider a partition $[0,M^*) = \bigcup_{\ell=K-k_0}^{K-1} \bigcup_{b=0}^{b_\ell -1} [M_{\ell,b}, M_{\ell,b}+q_{\ell})$ with integers $0 \le M_{\ell, b} \ll q_{\ell}$. The central limit theorem \eqref{TNCLT} applied to $T_N$, $0 \le N <q_{\ell}$ defined with $\xi=M_{\ell,b}+1/2$ and $Q_{\ell}=M_{\ell,b}+q_{\ell}$ gives that for any convex set $C \subseteq \mathbb{C}$,
\[ \frac{1}{q_{\ell}} \left| \left\{ 0 \le N <q_{\ell} \, : \, \frac{T_N}{\sigma (\alpha) \sqrt{\log q_{\ell}}} \in C \right\} \right| = \Phi (C) + O \left( \frac{(\log K)^2}{K^{1/6}} \right) . \]
Setting $\Gamma_N = (\log P_N(\alpha), \pi S_N(\alpha))$ and $E_N=(\frac{1}{2} \log N, \pi E(\alpha) \log N)$, formula \eqref{identification} states that $\Gamma_{M_{\ell,b}+N}-E_{M_{\ell,b}+q_{\ell}}$ is identified with $i T_N$, $0 \le N <q_{\ell}$ up to a negligible error $O(\log K)$. Note that the standard Gaussian is invariant under the rotation corresponding to multiplication by $i$. Hence for any convex set $C \subseteq \mathbb{R}^2$,
\[ \left| \left\{ 0 \le N <q_{\ell} \, : \, \frac{\Gamma_{M_{\ell,b}+N}-E_{M_{\ell,b}+q_{\ell}}}{\sigma (\alpha) \sqrt{\log q_{\ell}}} \in C \right\} \right| = \Phi (C) q_{\ell} + O \left( q_{\ell} \frac{(\log K)^2}{K^{1/6}} \right) . \]
Here $E_{M_{\ell,b}+q_{\ell}}$ resp.\ $\sqrt{\log q_{\ell}}$ can be replaced by $E_{M_{\ell,b}+N}$ resp.\ $\sqrt{\log (M_{\ell,b}+N)}$ up to a negligible error $O(q_{\ell} (\log K)/K)$. We tacitly used the fact that $\Phi ((1+\varepsilon)C)=\Phi (C)+O(|\varepsilon |)$ uniformly in convex sets $C$. Hence summing over $\ell,b$ leads to
\[ \left| \left\{ 0 \le N < M^* \, : \, \frac{\Gamma_N-E_N}{\sigma(\alpha) \sqrt{\log N}} \in C \right\} \right| = \Phi(C) M^* + O \left( M^* \frac{(\log K)^2}{K^{1/6}} \right) . \]
Since $M-M^*\ll q_{K-k_0} \ll M/K^{10}$, we finally deduce
\[ \frac{1}{M} \left| \left\{ 0 \le N < M \, : \, \frac{\Gamma_N-E_N}{\sigma(\alpha) \sqrt{\log N}} \in C \right\} \right| = \Phi(C) + O \left( \frac{(\log \log M)^2}{(\log M)^{1/6}} \right) . \]
\end{proof}

\subsection{Non-badly approximable irrationals}\label{nonbadlysection}

Fix an arbitrary irrational $\alpha =[a_0;a_1,a_2, \dots ]$ with convergents $p_k/q_k=[a_0;a_1,\dots , a_k]$. Using a reciprocity formula of Bettin and Drappeau \cite{BD}, in our recent paper \cite[Equation (14) and Proposition 3]{AB} we observed that
\begin{equation}\label{maxpn}
\max_{0 \le N <q_k} \log P_N (\alpha ) = V (a_1+\cdots +a_k) +O(A_k+(\log A_k) k)
\end{equation}
with $V$ as in \eqref{V}, $A_k=\max_{1 \le \ell \le k} a_{\ell}$ and a universal implied constant. We also record a variant for future reference, see \cite[Equation (5) and Proposition 3]{AB}:
\begin{equation}\label{maxpn2}
\max_{0 \le N <q_k} \log P_N (\alpha ) =(V+o(1))(a_1+\cdots +a_k) +O \left( \frac{\log A_k}{a_{k+1}} \right) \quad \textrm{as } k \to \infty ,
\end{equation}
provided that $(a_1+\cdots +a_k)/k \to \infty$. We also proved \cite[Proposition 2]{AB} that for any $0 \le N <q_k$,
\begin{equation}\label{symmetric}
\log P_N (\alpha ) + \log P_{q_k-N-1} (\alpha ) = \log q_k + O \left( \frac{\log A_k}{a_{k+1}} \right)
\end{equation}
with a universal implied constant. In particular, the distribution of $\log P_N (\alpha )$ when $N$ is chosen randomly from $[0,q_k)$ is approximately symmetric about the center $\frac{1}{2} \log q_k$. Concerning the variance, the Diophantine sum in \eqref{variance} has the evaluation \cite{BO}
\begin{equation}\label{diophantinesumpartialquotients}
\sqrt{\sum_{1 \le m < q_k} \frac{1}{8 \pi^2 m^2 \| m \alpha \|^2}} = \frac{\pi}{\sqrt{720}} \sqrt{a_1^2+\cdots +a_k^2} +O(\sqrt{k})
\end{equation}
with a universal implied constant. Although formulas \eqref{maxpn}--\eqref{diophantinesumpartialquotients} hold for arbitrary irrationals, all but \eqref{symmetric} are useless for badly approximable ones; indeed, in that case the main terms and the error terms are of the same order of magnitude. Note also that \eqref{diophantinesumpartialquotients} gives a wrong value of $\sigma (\alpha )$ for quadratic irrationals. They give precise results, however, for certain non-badly approximable irrationals.

Consider first Euler's number $e$. Then \eqref{maxpn} and \eqref{diophantinesumpartialquotients} read
\[ \begin{split} \max_{0 \le N <q_k} \log P_N (e) &= \frac{V}{9} k^2 +O(k \log k) , \\ \sum_{1 \le m < q_k} \frac{1}{8 \pi^2 m^2 \| m e \|^2} &= \frac{\pi^2}{14580} k^3 +O(k^2) . \end{split} \]
Since $q_k$ increases at the rate $\prod_{\ell =1}^k a_{\ell} \le q_k \le \prod_{\ell =1}^k (a_{\ell} +1)$, we have $\log q_k = \frac{1}{3} k \log k +O(k)$. Relations \eqref{eexpected}, \eqref{evariance} and \eqref{eextremal} now follow easily from the previous three formulas and Proposition \ref{momentsprop}; for $\min_{1 \le N \le M} \log P_N (e)$ use also \eqref{symmetric}.
\begin{proof}[Proof of Theorem \ref{etheorem}] Using \eqref{evariance} and the Chebyshev inequality, the proof is entirely analogous to that of Theorem \ref{badlyapproxtheorem}.
\end{proof}

\subsection{Asymptotics almost everywhere}\label{aeasymptoticssection}

Let us recall the precise statistics of the partial quotients of almost every irrational. Fix a nondecreasing positive real-valued function $\varphi (x)$ on $(0,\infty )$. The two most fundamental properties are that for a.e.\ $\alpha$,
\begin{enumerate}
\item[(i)] the inequality $a_k \ge \varphi (k)$ has finitely many solutions $k \in \mathbb{N}$ if and only if $\sum_{k=1}^{\infty} 1/\varphi (k) <\infty$;

\item[(ii)] $\log q_k \sim \frac{\pi^2}{12 \log 2} k$ as $k \to \infty$.
\end{enumerate}
As for the sum of the partial quotients, a classical result of Khinchin \cite{KH} states that
\[ \frac{a_1+\cdots +a_k}{k \log k} \to \frac{1}{\log 2} \qquad \textrm{in measure.} \]
The same asymptotics only holds a.e.\ if the sum is ``trimmed'', i.e.\ the largest term is removed. More precisely, Diamond and Vaaler \cite{DV} proved that for a.e.\ $\alpha$,
\begin{equation}\label{DVtrimmed}
\frac{a_1+\cdots +a_k - \vartheta (\alpha, k) A_k}{k \log k} \to \frac{1}{\log 2} \quad \textrm{as } k \to \infty
\end{equation}
with some\footnote{Their proof actually gives that the same holds with $\vartheta (\alpha, k)=1$, but we will not need this fact.} $0 \le \vartheta (\alpha, k) \le 1$, where $A_k=\max_{1 \le \ell \le k} a_{\ell}$ as before. In particular, the arithmetic mean of the partial quotients diverges, and from \eqref{maxpn2} we immediately obtain that for a.e.\ $\alpha$,
\begin{equation}\label{aemaxpn}
\max_{0 \le N <q_k} \log P_N (\alpha ) = (V+o(1)) A_k +O(k \log k) \quad \textrm{as } k \to \infty
\end{equation}
with an implied constant depending on $\alpha$. By \eqref{symmetric} we also have
\begin{equation}\label{aeminpn}
\min_{0 \le N <q_k} \log P_N (\alpha ) = -(V+o(1)) A_k +O(k \log k) \quad \textrm{as } k \to \infty ,
\end{equation}
and formulas \eqref{aeupperbound}, \eqref{aelowerbound} follow.
\begin{proof}[Proof of Theorem \ref{aeupperdensitytheorem}] We shall need a further result of Diamond and Vaaler \cite[Lemma 2]{DV}. As a simple corollary of the mixing property of the partial quotients, they observed that given any $\varepsilon >0$, for a.e. $\alpha$ for all but finitely many $k \in \mathbb{N}$ there is at most one integer $1 \le \ell \le k$ with $a_{\ell} \ge k (\log k)^{1/2+\varepsilon}$.

Assume first, that $\varphi (k) \ge k \log k \log \log k$ for all $k$. By property (i) above, for a.e.\ $\alpha$ there are infinitely many $k_0 \in \mathbb{N}$ such that
\[ a_{k_0} \ge \varphi (k_0) \ge k_0 \log k_0 \log \log k_0 . \]
For all but finitely many of these $k_0$ we now choose an integer $k \in [k_0, 2k_0]$ as follows. First of all note that by the lemma of Diamond and Vaaler, for all $1 \le \ell \le 2k_0$, $\ell \neq k_0$ we have $a_{\ell} \ll k_0 (\log k_0)^{1/2+\varepsilon}$. In particular, $A_{2k_0}=\max_{1 \le \ell \le 2k_0} a_{\ell}=a_{k_0}$. Let us now partition $[k_0+1,2k_0]$ into $\sim k_0/(\log k_0)^2$ subintervals $I_1, I_2, \dots$, each of size $\sim (\log k_0)^2$. If $\max_{\ell \in I_j} a_{\ell} \ge (\log k_0)^4$ for all $j$, then clearly
\[ a_1+\cdots +a_{2k_0} \ge a_{k_0} + \sum_{j} (\log k_0)^4 \sim A_{2k_0} + k_0 (\log k_0)^2 , \]
which contradicts \eqref{DVtrimmed}. Therefore there exists a subinterval $I_j \subset [k_0+1,2k_0]$ of size $\sim (\log k_0)^2$ such that $\max_{\ell \in I_j} a_{\ell} \le (\log k_0)^4$. Choose $k$ in the middle of $I_j$. Note that the error term in Proposition \ref{momentsprop} then satisfies
\[ \max_{|\ell -k| \ll \log k} a_{\ell} \le \max_{\ell \in I_j} a_{\ell} \ll (\log k)^4 , \]
therefore
\[ \frac{1}{q_k} \sum_{N=0}^{q_k-1} \left( \log P_N (\alpha ) - \frac{1}{2} \log q_k \right)^2 = \sum_{m=1}^{q_k-1} \frac{1}{8 \pi^2 m^2 \| m \alpha \|^2} + O((\log k)^{12}) . \]
Using \eqref{DVtrimmed} once again, we obtain
\[ A_k \le \sqrt{a_1^2+\cdots +a_k^2} \le a_1+\cdots +a_k \le A_k+O(k \log k) , \]
hence \eqref{diophantinesumpartialquotients} yields
\[ \frac{1}{q_k} \sum_{N=0}^{q_k-1} (\log P_N (\alpha ))^2 = \frac{1}{q_k} \sum_{N=0}^{q_k-1} \left( \log P_N (\alpha ) - \frac{1}{2} \log q_k \right)^2 +O((\log q_k)^2) = \frac{\pi^2}{720} A_k^2 + O(A_k k \log k). \]
Since $A_k =A_{k_0} \gg k \log k \log \log k$, the previous formula together with  \eqref{aemaxpn} and \eqref{aeminpn} show that (cf.\ \eqref{aevariance})
\[ \sqrt{\frac{1}{q_k} \sum_{N=0}^{q_k-1} (\log P_N (\alpha ))^2} = \left( \frac{\pi}{\sqrt{720}V} +o(1) \right) \max_{0 \le N <q_k} |\log P_N (\alpha )| \qquad \textrm{for infinitely many }k. \]

Fix $\varepsilon >0$, and let $T_k=\max_{0 \le N <q_k} |\log P_N (\alpha )|$. Splitting the sum according to whether $|\log P_N (\alpha )| \le \varepsilon T_k$ or not, we get
\[ \frac{1}{q_k} \sum_{N=0}^{q_k-1} (\log P_N (\alpha ))^2 \le \varepsilon^2 T_k^2 + \frac{1}{q_k} \left| \left\{ 0 \le N < q_k \, : \, |\log P_N (\alpha )| \ge \varepsilon T_k \right\} \right| T_k^2 . \]
The previous two formulas immediately yield
\[ \frac{1}{q_k} \left| \left\{ 0 \le N < q_k \, : \, |\log P_N (\alpha )| \ge \varepsilon T_k \right\} \right| \ge \frac{\pi^2}{720 V^2} - \varepsilon^2 -o(1) \qquad \textrm{for infinitely many } k. \]
Recall from \eqref{symmetric} that the distribution of $\log P_N (\alpha )$ is approximately symmetric about the negligible center $\frac{1}{2} \log q_k \ll k=o(A_k)$. By construction we also have $k_0 \le k \le 2k_0$, and hence $0 \le N <q_k$ implies $\log N \le (\pi^2/(6 \log 2)+o(1)) k_0$. Therefore
\[ T_k = (V+o(1)) A_k \ge (V+o(1)) \varphi (k_0) \ge \frac{1}{100} \varphi \left( \frac{\log N}{100} \right) , \]
and we obtain
\[ \frac{1}{q_k} \left| \left\{ 0 \le N < q_k \, : \, \log P_N (\alpha ) \ge \frac{\varepsilon}{100} \varphi \left( \frac{\log N}{100} \right) \right\} \right| \ge \frac{\pi^2}{1440 V^2} - \frac{\varepsilon^2}{2} -o(1) \]
and
\[ \frac{1}{q_k} \left| \left\{ 0 \le N < q_k \, : \, \log P_N (\alpha ) \le - \frac{\varepsilon}{100} \varphi \left( \frac{\log N}{100} \right) \right\} \right| \ge \frac{\pi^2}{1440 V^2} - \frac{\varepsilon^2}{2} -o(1) \]
for infinitely many $k$. Note that $\varphi^*(x)=\frac{100}{\varepsilon} \varphi (100x)$ also satisfies $\sum_{k=1}^{\infty} 1/\varphi^* (k) =\infty$ and $\varphi^*(k) \ge k \log k \log \log k$. Hence by repeating the argument with $\varphi^*$, we get that for a.e.\ $\alpha$ the sets
\[ \left\{ N \in \mathbb{N} \, : \, \log P_N (\alpha ) \ge \varphi (\log N) \right\} \]
and
\[ \left\{ N \in \mathbb{N} \, : \, \log P_N (\alpha ) \le - \varphi (\log N) \right\} \]
both have upper asymptotic density at least $\pi^2/(1440 V^2) - \varepsilon^2/2$. Here $\varepsilon>0$ is arbitrary, and the claim follows under the initial assumption $\varphi (k) \ge k \log k \log \log k$. This assumption is easily removed: simply note that $\varphi^{**}(x)=\varphi (x)+x \log (x+3) \log \log (x+3)$ satisfies $\sum_{k=1}^{\infty} 1/\varphi^{**}(k) = \infty$ and $\varphi^{**}(k) \ge k \log k \log \log k$, and that the claim with $\varphi^{**}$ implies the claim with $\varphi$.
\end{proof}

\begin{proof}[Proof of Theorem \ref{aeliminftheorem}] Given $M \ge 1$, let $k_M(\alpha )$ be such that $q_{k_M(\alpha )} \le M < q_{k_M(\alpha )+1}$. From \eqref{maxpn2} and \eqref{DVtrimmed} we immediately deduce that for a.e.\ $\alpha$,
\[ \max_{1 \le N \le M} \log P_N (\alpha ) \ge \frac{V+o(1)}{\log 2} k_M(\alpha ) \log k_M(\alpha ) . \]
By $\log q_k \sim \frac{\pi^2}{12 \log 2}k$, here $k_M(\alpha ) \sim \frac{12 \log 2}{\pi^2} \log M$, and we get that for a.e.\ $\alpha$,
\[ \liminf_{M \to \infty} \frac{\displaystyle \max_{1 \le N \le M} \log P_N (\alpha )}{\log M \log \log M} \ge \frac{12 V}{\pi^2} . \]
Using \eqref{symmetric}, we similarly obtain that the same holds with $\max$ replaced by $-\min$. On the other hand, convergence in measure in Theorem \ref{convergenceinmeasuretheorem} shows that the a.e.\ limit along a suitably sparse subsequence equals $12V/\pi^2$.
\end{proof}

\subsection{Asymptotics in measure}\label{measuresection}

Recall that the Gauss measure $\nu (A):=\frac{1}{\log 2} \int_A \frac{1}{1+x} \, \mathrm{d}x$ ($A \subseteq [0,1]$ Borel) is an invariant measure under the shift $x \mapsto \{ 1/x \}$. The partial quotients are thus identically distributed:
\[ \nu \left( \left\{ \alpha \in [0,1] \, : \, a_k=j \right\} \right) = \frac{1}{\log 2} \log \left( 1+\frac{1}{j(j+2)} \right) , \quad k,j \ge 1. \]
The terms ``a.e.'' and ``convergence in measure'' are of course equivalent for the Gauss and the Lebesgue measures, and we will use them in this sense.

\begin{proof}[Proof of Theorem \ref{convergenceinmeasuretheorem}] Given $M \ge 1$, let $k_M (\alpha )$ be such that $q_{k_M(\alpha )} \le M < q_{k_M(\alpha )+1}$. By \eqref{maxpn2}, for a.e.\ $\alpha$,
\[ \max_{1 \le N \le M} \log P_N (\alpha ) = \left( V +o(1) \right) (a_1+\cdots +a_{k_M(\alpha )} + \xi_M(\alpha ) ) \]
with some $|\xi_M(\alpha )| \le a_{k_M(\alpha )+1}$. Formula \eqref{DVtrimmed} and $\log M \sim \frac{\pi^2}{12 \log 2} k_M(\alpha)$ show that for a.e.\ $\alpha$,
\[ \frac{\displaystyle \max_{1 \le N \le M} \log P_N (\alpha )}{\log M \log \log M} = \left( V +o(1) \right) \left( \frac{12}{\pi^2} +o(1)+\frac{\xi^*_M(\alpha)}{\log M \log \log M} \right) \]
with some $|\xi^*_M(\alpha) | \le 2 \max_{1 \le \ell \le k_M(\alpha )+1} a_{\ell}$. In particular, the $o(1)$ error terms converge to zero also in measure, and it remains to prove that
\begin{equation}\label{convinmeasure}
\frac{\max_{1 \le \ell \le k_M(\alpha )+1} a_{\ell}}{\log M \log \log M} \to 0 \qquad \textrm{in measure.}
\end{equation}
Since $\log M \sim \frac{\pi^2}{12 \log 2} k_M(\alpha)$ holds also in measure, we have
\[ \nu \left( \left\{ \alpha \in [0,1] \, : \, k_M(\alpha )+1 \ge 100 \log M \right\} \right) \to 0 \qquad \textrm{as } M \to \infty . \]
On the other hand, for any $\varepsilon >0$ and any $\ell \in \mathbb{N}$,
\[ \begin{split} \nu \left( \left\{ \alpha \in [0,1] \, : \, a_{\ell} \ge \varepsilon \log M \log \log M \right\} \right) &= \frac{1}{\log 2} \sum_{j \ge \varepsilon \log M \log \log M} \log \left( 1 +\frac{1}{j(j+2)} \right) \\ &\ll \frac{1}{\varepsilon \log M \log \log M} . \end{split} \]
The union bound yields
\[ \nu \left( \left\{ \alpha \in [0,1] \, : \, \max_{1 \le \ell \le 100 \log M} a_{\ell} \ge \varepsilon \log M \log \log M \right\} \right) \ll \frac{1}{\varepsilon \log \log M} \to 0 \quad \textrm{as } M \to \infty , \]
and \eqref{convinmeasure} follows. By \eqref{symmetric}, the claim also holds with $\max$ replaced by $-\min$.
\end{proof}

The proof of Theorem \ref{limitdistributiontheorem} is based on results of Samur \cite{SA} on the asymptotics of sums of the form $\sum_{\ell=1}^k f(a_{\ell})$. Let $\mu$ be a probability measure on $[0,1]$ that is absolutely continuous with respect to the Lebesgue measure. A special case of \cite[Corollary 2.13]{SA} states that for any $t \ge 0$,
\[ \mu \left( \left\{ \alpha \in [0,1] \, : \, \frac{a_1^2+\cdots +a_k^2}{k^2} \le t \right\} \right) \to F(t) \qquad \textrm{as } k \to \infty , \]
where Samur defines the limit distribution via the L\'evy--Khinchin representation of its characteristic function as
\[ \int_{\mathbb{R}} e^{itx} \, \mathrm{d} F(t) = \exp \left( i \frac{x}{\log 2} + \frac{1}{2\log 2} \int_0^{\infty} \left( e^{ixy}-1-ixy I_{(0,1]}(y) \right) y^{-3/2} \, \mathrm{d} y \right) . \]
Elementary calculations and the values of the Fresnel-type integrals
\[ \int_{0}^{\infty} \frac{1-\cos y}{y^{3/2}} \, \mathrm{d}y = \int_{0}^{\infty} \frac{\sin y}{y^{3/2}} \, \mathrm{d}y = \sqrt{2 \pi} \]
lead to the simplified form
\[ \int_{\mathbb{R}} e^{itx} \, \mathrm{d} F(t) = \exp \left( - \frac{\sqrt{2 \pi}}{2 \log 2} |x|^{1/2} \left( 1-i \, \mathrm{sgn} \, x \right) \right) , \]
which is the characteristic function of a L\'evy distribution supported on $[0,\infty )$. Using the appropriate normalization, in the special case of sums of squares of the partial quotients the theorem of Samur thus states that for any $t \ge 0$,
\begin{equation}\label{Samur}
\mu \left( \left\{ \alpha \in [0,1] \, : \, \frac{2 (\log 2)^2}{\pi} \cdot \frac{a_1^2+\cdots +a_k^2}{k^2} \le t \right\} \right) \to \int_0^t \frac{e^{-1/(2x)}}{\sqrt{2 \pi} x^{3/2}} \, \mathrm{d}x \qquad \textrm{as } k \to \infty .
\end{equation}
We shall also need a more sophisticated form of $\log q_k \sim \frac{\pi^2}{12 \log 2} k$, such as the central limit theorem \cite{PH,PS}
\begin{equation}\label{logqkCLT}
\mu \left( \left\{ \alpha \in [0,1] \, : \, \frac{\log q_k-\frac{\pi^2}{12 \log 2} k}{\tau \sqrt{k}} \le t \right\} \right) \to \int_{-\infty}^{t} \frac{1}{\sqrt{2 \pi}}e^{-x^2/2} \, \mathrm{d} x \quad \textrm{as } k \to \infty ,
\end{equation}
where $\tau >0$ is a universal constant.

\begin{proof}[Proof of Theorem \ref{limitdistributiontheorem}] Given $M \ge 1$, let $k_M (\alpha )$ be such that $q_{k_M(\alpha )} \le M < q_{k_M(\alpha )+1}$. By Proposition \ref{momentsprop}, for a.e.\ $\alpha$ there exists a constant $K(\alpha )>0$ such that
\[ \begin{split} \frac{1}{M} \sum_{N=1}^M \log P_N (\alpha ) &= \frac{1}{2} \log M + \zeta_M (\alpha ), \\ \frac{1}{M} \sum_{N=1}^M \left( \log P_N (\alpha ) - \frac{1}{2} \log M \right)^2 &= \sum_{m=1}^M \frac{1}{8 \pi^2 m^2 \| m \alpha \|^2} + \zeta_M^* (\alpha) \end{split} \]
with some
\[ \begin{split} |\zeta_M (\alpha )| &\le K(\alpha ) \max_{|\ell -k_M(\alpha ) | \le K(\alpha ) \log k_M(\alpha )} a_{\ell} \cdot \log \log M , \\ |\zeta_M^* (\alpha )| &\le K(\alpha ) \max_{|\ell -k_M(\alpha ) | \le K(\alpha ) \log k_M(\alpha )} a_{\ell}^2 \cdot (\log \log M)^4 . \end{split} \]
First, we estimate $\zeta_M(\alpha)$ in Gauss measure. Fix $\varepsilon >0$. Then $\nu (\{ \alpha \in [0,1] \, : \, K(\alpha ) \le K \})>1-\varepsilon$ with a large enough $K>0$. Letting $k_0:=\lceil \frac{12 \log 2}{\pi^2} \log M \rceil$, the central limit theorem \eqref{logqkCLT} immediately gives
\[ \nu \left( \left\{ \alpha \in [0,1] \, : \, |\log q_{k_0} - \log M| \le C \sqrt{\log M} \right\} \right) > 1-\varepsilon \]
with a large enough $C>0$ and all $M \ge M_0(\varepsilon)$. Since the convergent denominators increase at least exponentially fast (recall e.g.\ the general fact $q_{\ell+2}/q_{\ell} \ge 2$), for all $\alpha$ in the previous set we have $|k_M(\alpha ) - k_0 |\le C \sqrt{\log M}$. Therefore for all $\alpha$ outside a set of $\nu$-measure $< 2 \varepsilon$ and all large enough $M$,
\[ \{ \ell \, : \, |\ell -k_M (\alpha )| \le K(\alpha ) \log k_M(\alpha ) \} \subset [k_0-C \sqrt{\log M}, k_0+C \sqrt{\log M} ] , \]
and in particular
\[ |\zeta_M(\alpha )| \le K \max_{\ell \in [k_0-C \sqrt{\log M}, k_0+C \sqrt{\log M} ]} a_{\ell} \cdot \log \log M . \]
For any $\ell$,
\[ \nu \left( \left\{ \alpha \in [0,1] \, : \, a_{\ell} \ge \frac{C}{\varepsilon}\sqrt{\log M} \right\} \right) \ll \frac{\varepsilon}{C\sqrt{\log M}} , \]
therefore the union bound yields
\[ \nu \left( \left\{ \alpha \in [0,1] \, : \, \max_{\ell \in [k_0-C \sqrt{\log M}, k_0+C \sqrt{\log M} ]} a_{\ell} \ge \frac{C}{\varepsilon}\sqrt{\log M} \right\} \right) \ll \varepsilon . \]
Hence outside a set of $\nu$-measure $\ll \varepsilon$, we have $|\zeta_M(\alpha)| \le (CK/\varepsilon) \sqrt{\log M} \log \log M$. In particular, $\zeta_M(\alpha)=o(\sqrt{\log M} (\log \log M)^2)$ in measure, which proves the first claim of the theorem.

We similarly deduce $\zeta_M^*(\alpha )=o(\log M (\log \log M)^5)$ in measure. The same arguments also show that
\[ \begin{split} a_1^2+\cdots +a_{k_M(\alpha )}^2 &= a_1^2+\cdots + a_{k_0}^2 +O\left( \max_{|\ell -k_0| \le C \sqrt{\log M}} a_{\ell}^2 \cdot C \sqrt{\log M} \right) \\ &= a_1^2+\cdots +a_{k_0}^2 +o \left( (\log M)^{3/2} \log \log M \right) \quad \textrm{in measure} . \end{split} \]
Using e.g.\ \eqref{Samur}, we observe that
\[ \sqrt{(a_1^2+\cdots +a_{k_M (\alpha )}^2) k_M (\alpha)} =o \left( (\log M)^{3/2} \log \log M \right) \quad \textrm{in measure} \]
is also negligible, hence \eqref{diophantinesumpartialquotients} yields
\[ \frac{1}{M} \sum_{N=1}^M \left( \log P_N(\alpha ) - \frac{1}{2} \log M \right)^2 = \frac{\pi^2}{720} (a_1^2+\cdots +a_{k_0}^2) +o\left( (\log M)^{3/2} \log \log M \right) \]
in measure. In particular,
\[ \frac{10 \pi}{M (\log M)^2} \sum_{N=1}^M \left( \log P_N(\alpha ) - \frac{1}{2} \log M \right)^2 = \frac{2 (\log 2)^2}{\pi} \cdot \frac{a_1^2+\cdots +a_{k_0}^2}{k_0^2} +o(1) \quad \textrm{in measure}. \]
Since $\mu$ is absolutely continuous with respect to the Gauss measure, the $o(1)$ error term converges to zero also in $\mu$-measure. The second claim of the theorem is thus reduced to the result \eqref{Samur} of Samur.
\end{proof}

\section{Distribution of Birkhoff sums}\label{birkhoffdistribution}

Throughout this section, $f: \mathbb{R} \to \mathbb{R}$ is a $1$-periodic function which is of bounded variation on $[0,1]$, such that $\int_0^1 f(x) \, \mathrm{d} x =0$. Let $V(f)$ denote its total variation on $[0,1]$, and $\widehat{f}(m)=\int_0^1 f(x) e^{-2 \pi i m x} \, \mathrm{d} x$, $m \in \mathbb{Z}$ its Fourier coefficients. Let $\alpha =[a_0;a_1,a_2,\dots ]$ be irrational with convergents $p_k/q_k=[a_0;a_1,\dots, a_k]$. Finally, let $S_N(\alpha, f)=\sum_{n=1}^N f (n \alpha )$, and let $A_M=A_M(\alpha, f)$ and $B_M=B_M(\alpha, f)$ be as in \eqref{AMBM}.

\subsection{General functions of bounded variation}\label{generalsection}

\begin{prop}\label{momentsprop2} Assume that $a_k \le c k^d$ for all $k \ge 1$ with some constants $c,d \ge 0$. For any $q_k \le M < q_{k+1}$ we have
\begin{equation}\label{expectedSN}
\begin{split} A_M = &\sum_{\substack{m \in \mathbb{Z} \\ 0<|m|<M (\log M)^{2d+1}}} \left( 1-\frac{|m|}{\lfloor M (\log M)^{2d+1} \rfloor} \right) \widehat{f}(m) \frac{e^{2 \pi i m \alpha}}{1-e^{2 \pi i m \alpha}} \\ &+ O \left( V(f) \max_{|\ell -k| \ll \log k} a_{\ell} \cdot (\log \log M)^2 \right) \end{split}
\end{equation}
and
\begin{equation}\label{varianceSN}
B_M^2 = \sum_{m=1}^M \frac{|\widehat{f}(m)|^2}{2 \pi^2 \| m \alpha \|^2} + O \bigg( V(f) \sqrt{\sum_{m=1}^M \frac{|\widehat{f}(m)|^2}{\| m \alpha \|^2}} + V(f)^2 \max_{|\ell -k|\ll \log k} a_{\ell}^2 \cdot (\log \log M)^4 \bigg)
\end{equation}
with implied constants depending only on $c$ and $d$.
\end{prop}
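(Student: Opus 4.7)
The proof is naturally modeled on that of Proposition \ref{momentsprop}: the only structural input about $\log|2\sin(\pi x)|$ used there was the identity $\widehat{f}(m)=-1/(2m)$, and the plan is to repeat every step with generic Fourier coefficients $\widehat{f}(m)$, relying on the standard bound $|\widehat{f}(m)|\le V(f)/(2\pi|m|)$ (proved by one integration by parts against the Stieltjes measure $\mathrm{d}f$). Both main identities in Proposition \ref{momentsprop}, i.e.\ the geometric-sum evaluation of $S_N(\alpha,f)$ and its average over $N$, go through verbatim in the form
\[
 S_N(\alpha,f)=\sum_{m\ne 0}\widehat{f}(m)\,\frac{e^{2\pi im(N+1)\alpha}-e^{2\pi im\alpha}}{e^{2\pi im\alpha}-1},\qquad \frac{1}{M}\sum_{N=1}^{M}S_N(\alpha,f)=\sum_{m\ne 0}\widehat{f}(m)\cdot(\text{Cesàro term}).
\]
The one novelty is that the Fourier series is only conditionally convergent, so I would truncate at $|m|\le L:=\lfloor M\log^{2d+1}M\rfloor$ with the Fejér triangular weights $(1-|m|/L)$ rather than a sharp cutoff; these weights are already present in the statement of \eqref{expectedSN} and are what regularises the slow $1/m$ decay of $\widehat{f}(m)$ combined with the unbounded factor $1/|1-e^{2\pi im\alpha}|$.

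For the expected value I would estimate $A_M-A_M^{\mathrm{trunc}}$ by splitting it into a pure tail $\sum_{|m|>L}$ and a Fejér defect $\sum_{|m|\le L}(|m|/L)$. Using $|\widehat{f}(m)|\ll V(f)/|m|$, both pieces reduce to sums of the form $(1/M)\sum_{m}1/(m\|m\alpha\|^2)$ and $\sum_{m>M}1/(m^2\|m\alpha\|^2)$ that are directly bounded by Lemma \ref{diophantinelemma}, producing the error $V(f)\cdot\max_{|\ell-k|\ll\log k}a_\ell\cdot\log\log M$. The explicit Cesàro computation then identifies the main term as the one written in \eqref{expectedSN}, up to a remainder $O(\widehat{f}(m)/(M\|m\alpha\|^2))$ per frequency whose total contribution, via the same lemma, explains the extra $(\log\log M)^2$ factor.

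For the variance I would work with the centred sum $\widetilde S_N=S_N(\alpha,f)-A_M$, apply the same truncation to the Fourier expansion of $\widetilde S_N$, and square. As in the derivation of \eqref{varianceVM}, one gets $B_M^2=V_M+O(V(f)\sqrt{V_M}/\log^{d+1}M+V(f)^2/\log^{2d+2}M)$ where the Cauchy–Schwarz cross term gives precisely the $V(f)\sqrt{\sum|\widehat{f}(m)|^2/\|m\alpha\|^2}$ appearing in \eqref{varianceSN}. In $V_M$, the diagonal $m_2=-m_1$ contribution equals
\[
 \sum_{0<|m|\le L(1+\log M)}\frac{|\widehat{f}(m)|^2}{4\sin^2(\pi m\alpha)}\Bigl(\tfrac{1}{2}+O\bigl(\min\{1/(M\|2m\alpha\|),1\}\bigr)\Bigr),
\]
which, after $1/\sin^2(\pi m\alpha)=1/(\pi^2\|m\alpha\|^2)+O(1)$ and summing only over positive $m$, yields the main term $\sum_{m=1}^{M}|\widehat{f}(m)|^2/(2\pi^2\|m\alpha\|^2)$ up to the appropriate error by the second and third estimates of Lemma \ref{diophantinelemma}. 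The off-diagonal contribution $m_1+m_2\ne 0$ is weighted by $|\widehat{f}(m_1)\widehat{f}(m_2)|\le V(f)^2/(4\pi^2 m_1m_2)$ and is controlled term-by-term by the final two estimates of Lemma \ref{diophantinelemma}, producing the $V(f)^2(\log\log M)^4\max a_\ell^2$ part of the error.

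The bookkeeping obstacle I expect to be most delicate is the off-diagonal bound for the variance. The corresponding step in Proposition \ref{momentsprop} benefits from the real, monotone, and explicit coefficients $-1/(2m)$; for a general BV function the factors $\widehat{f}(m)$ are complex and oscillatory, so the double sum must be absorbed purely through the prefactor $V(f)^2/(m_1m_2)$ and the Diophantine gain $\min\{1/(M\|(m_1\pm m_2)\alpha\|),1\}$, with no help from sign cancellations inside $\widehat{f}$. Verifying that the pigeonhole argument of Lemma \ref{diophantinelemma} still yields the stated exponent of $\log\log M$ under this worst-case assumption, and that the tail-of-Fourier estimates remain consistent across the range $L<|m|\le M\log^{2d+2}M$, is where the proof requires the most care.
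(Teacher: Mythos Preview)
Your overall plan --- truncate, average, square, then separate diagonal from off-diagonal and feed everything into Lemma \ref{diophantinelemma} --- matches the paper's, but there is a genuine gap in the truncation step. You propose to control the ``pure tail''
\[
\sum_{|m|>L}\widehat f(m)\,\frac{e^{2\pi i m\alpha}}{1-e^{2\pi i m\alpha}}
\]
(and the analogous tail for each $S_N$) using only $|\widehat f(m)|\ll V(f)/|m|$. That bound yields $\ll V(f)\sum_{|m|>L}1/(|m|\,\|m\alpha\|)$, which diverges for every irrational $\alpha$; the same issue afflicts your Fej\'er defect $\sum_{|m|\le L}(|m|/L)\,\widehat f(m)/(1-e^{2\pi i m\alpha})$, which with the trivial bound is $\ll (V(f)/L)\sum_{|m|\le L}1/\|m\alpha\|\ll V(f)\log^{d+1}M$, far larger than the claimed error. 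In Proposition \ref{momentsprop} the tail estimate worked only because $\widehat f(m)=-1/(2m)$ is monotone and summation by parts applies; for a generic $f$ of bounded variation there is no such cancellation, and your claimed reduction of these two pieces to $(1/M)\sum_m 1/(m\|m\alpha\|^2)$ and $\sum_{m>M}1/(m^2\|m\alpha\|^2)$ is simply not valid.

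The paper sidesteps this entirely by invoking the Fej\'er-kernel approximation formula \eqref{fourierapproximation}: since the points $n\alpha$, $1\le n\le N\le M$, are $r$-separated with $r\gg 1/(M\log^d M)$, taking $H=\lfloor M\log^{2d+1}M\rfloor$ gives
\[
S_N(\alpha,f)=\sum_{0<|m|<H}\Big(1-\tfrac{|m|}{H}\Big)\widehat f(m)\,\frac{e^{2\pi i m\alpha}-e^{2\pi i Nm\alpha}}{1-e^{2\pi i m\alpha}}+O(V(f))
\]
directly, with the $O(V(f))$ coming from Koksma's inequality rather than from any Fourier tail bound. Once this is in hand the rest of your sketch goes through essentially as written. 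Note also that this $O(V(f))$ --- not the $O(V(f)/\log^{d+1}M)$ you quoted from \eqref{varianceVM} --- is exactly what, after Cauchy--Schwarz, produces the first error term $V(f)\sqrt{\sum_{m=1}^M|\widehat f(m)|^2/\|m\alpha\|^2}$ in \eqref{varianceSN}; the paper remarks explicitly that this term can be dropped only for special $f$ whose Fourier tails admit the stronger pointwise estimate you are implicitly assuming.
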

\noindent The result applies also to orbits with an arbitrary starting point $x_0$, since $\sum_{n=1}^N f(x_0+n\alpha ) = \sum_{n=1}^N \tilde{f}(n \alpha)$ with the shifted function $\tilde{f}(x)=f(x+x_0)$. The variance $B_M^2$ does not depend on the starting point, as $|\widehat{f}(m)|$ is invariant under shifts.

Recall that $|\widehat{f}(m)|\le V(f)/|m|$ for all $m \neq 0$. In particular, by \eqref{m2malpha2sum} for all badly approximable $\alpha$ we have $B_M^2 \ll V(f)^2 \log M$ with an implied constant depending only on $\alpha$. If in addition $|\widehat{f}(m)| \ge C/|m|$ for all $m \neq 0$ with some constant $C>0$, then we have the matching lower bound $B_M^2 \gg C^2 \log M$. For more general $\alpha$,
\[ B_M^2 \ll V(f)^2 (a_1^2 + \cdots +a_k^2) \]
with a matching lower bound if $|\widehat{f}(m)| \ge C/|m|$, see \eqref{diophantinesumpartialquotients}.

Using Koksma's inequality \cite[p.\ 143]{KN} and the fact that the sequence $\{ n \alpha \}$, $1 \le n \le N$ has discrepancy $\ll (\log N)^{d+1}$ \cite[p.\ 52]{DT}, we get $|S_N(\alpha ,f)| \ll V(f) (\log N)^{d+1}$. In particular, $|A_M| \ll V(f) (\log M)^{d+1}$. If $f$ is even, then the expected value $A_M$ is zero up to the error in \eqref{expectedSN}. This follows from combining the $m$ and $-m$ terms in the main term of \eqref{expectedSN}, and noting that
\[ \left| \sum_{0<m<M (\log M)^{d+1}} \left( 1-\frac{m}{\lfloor M (\log M)^{d+1} \rfloor} \right) \widehat{f}(m) \right| \ll V(f) . \]
In fact, the series $\sum_{m=1}^{\infty} \widehat{f}(m)$ converges since $f$ is of bounded variation.

We mention that the first error term $V(f) \sqrt{\sum_{m=1}^M |\widehat{f}(m)|^2 / \| m \alpha \|^2}$ can be removed in \eqref{varianceSN} for certain $f$, such as $f(x)=\{ x \} -1/2$, or $f(x)=I_{[a,b]}(\{ x \}) -(b-a)$ provided that $\inf_{q \in \mathbb{Z} \backslash \{ 0 \}} |q| \cdot \| q \alpha -\beta \| >0$ for both endpoints $\beta =a,b$. This can be seen by following the steps in the proof of Proposition \ref{momentsprop}, and noting that the Fourier series of these functions satisfy a suitable tail estimate, such as
\[ \left| \sum_{\substack{m \in \mathbb{Z} \\ |m| > M (\log M)^{2d+2}}} \widehat{f}(m) e^{2 \pi i m x} \right| \ll \frac{1}{M (\log M)^{2d+2} \| x -a \|} + \frac{1}{M (\log M)^{2d+2} \| x -b \|} \]
in the case of $f(x)=I_{[a,b]}(\{ x \}) -(b-a)$. For general $f$ of bounded variation, we use an approximation formula for sums of the form $\sum_{n=1}^N f(x_n)$ instead: if the points $x_1, x_2, \dots, x_N \in \mathbb{R}$ satisfy $\| x_n-x_{\ell} \| \ge r>0$ for all $n \neq \ell$ with some $r>0$, then for any integer $H>1$,
\begin{equation}\label{fourierapproximation}
\sum_{n=1}^N f(x_n) = \sum_{\substack{m \in \mathbb{Z} \\ 0<|m| < H}} \left( 1-\frac{|m|}{H} \right) \widehat{f}(m) \sum_{n=1}^N e^{2 \pi i m x_n} + O \left( V(f) \left( \frac{\log H}{rH}+1 \right) \right)
\end{equation}
with a universal implied constant. The proof is based on smoothing with the Fej\'er kernel of order $H$ and Koksma's inequality, see \cite{BB}.

\begin{proof}[Proof of Proposition \ref{momentsprop2}] We may assume that $V(f)=1$. Fix $1 \le N \le M$, and note that the points $x_n=n \alpha$, $1 \le n \le N$ satisfy $\| n \alpha - \ell \alpha \| \ge \| q_k \alpha \| \ge 1/(2q_{k+1}) \gg 1/(M (\log M)^d)$ for all $n \neq \ell$ by the assumption $a_k \le ck^d$. Applying \eqref{fourierapproximation} with a suitable $r \gg 1/(M (\log M)^d)$ and $H=\lfloor M (\log M)^{2d+1} \rfloor$, we thus get
\[ S_N (\alpha, f) = \sum_{\substack{m \in \mathbb{Z} \\ 0<|m|<H}} \left( 1-\frac{|m|}{H} \right) \widehat{f}(m) \frac{e^{2 \pi i m \alpha}-e^{2 \pi i N m \alpha}}{1-e^{2 \pi i m \alpha}} +O(1). \]
Taking the average over $N \in [1,M]$ and introducing
\[ E_M := \sum_{\substack{m \in \mathbb{Z} \\ 0<|m|<H}} \left( 1-\frac{|m|}{H} \right) \widehat{f}(m) \frac{e^{2 \pi i m \alpha}}{1-e^{2 \pi i m \alpha}} , \]
we obtain
\[ \begin{split} A_M &= E_M - \sum_{\substack{m \in \mathbb{Z} \\ 0<|m|<H}} \left( 1-\frac{|m|}{H} \right) \widehat{f}(m) \frac{e^{2 \pi i m \alpha} - e^{2 \pi i m (M+1) \alpha}}{M(1-e^{2 \pi i m \alpha})^2} +O(1) \\ &= E_M + O \left( 1+ \sum_{m=1}^{H-1} \frac{1}{m} \min \left\{ \frac{1}{M \| m \alpha \|^2} , \frac{1}{\| m \alpha \|} \right\} \right) \\ &= E_M + O \left( \max_{|\ell -k| \ll \log k} a_{\ell} \cdot (\log \log M)^2  \right) , \end{split} \]
where the last step follows from an obvious modification of the second claim in Lemma \ref{diophantinelemma}. This proves \eqref{expectedSN}.

Next, we prove \eqref{varianceSN}. By the previous three formulas,
\[ S_N (\alpha ,f) - E_M = - \sum_{\substack{m \in \mathbb{Z} \\ 0<|m|<H}} \left( 1-\frac{|m|}{H} \right) \widehat{f}(m) \frac{e^{2 \pi i N m \alpha}}{1-e^{2 \pi i m \alpha}} + O(1) . \]
Squaring both sides, and then taking the average over $N \in [1,M]$ yields
\[ \frac{1}{M} \sum_{N=1}^M \left( S_N (\alpha ,f) -E_M \right)^2 = V_M +O \left( \sqrt{V_M} +1 \right) , \]
where
\[ V_M := \frac{1}{M} \sum_{N=1}^M \left| \sum_{\substack{m \in \mathbb{Z} \\ 0<|m|<H}} \left( 1-\frac{|m|}{H} \right) \widehat{f}(m) \frac{e^{2 \pi i N m \alpha}}{1-e^{2 \pi i m \alpha}} \right|^2 . \]
Following the steps in the proof of Proposition \ref{momentsprop} shows that the contribution of the off-diagonal terms is negligible, hence
\[ V_M = \sum_{\substack{m \in \mathbb{Z} \\ 0<|m|<H}} \left( 1-\frac{|m|}{H} \right)^2 \frac{|\widehat{f}(m)|^2}{|1-e^{2 \pi i m \alpha}|^2} + O \left( \max_{|\ell -k| \ll \log k} a_{\ell}^2 \cdot (\log \log M)^4 \right) . \]
We can clean up the main term by noting that the contribution of the terms $M<|m|<H$ is negligible, and that for all $0<|m| \le M$ the error of replacing $(1-|m|/H)^2$ by $1$ is also negligible:
\[ \begin{split} \sum_{\substack{m \in \mathbb{Z} \\ M<|m|<H}} \frac{1}{m^2 \| m \alpha \|^2} &\ll \max_{|\ell -k| \ll \log k} a_{\ell}^2 \cdot \log \log M , \\ \sum_{\substack{m \in \mathbb{Z} \\ 0<|m| \le M}} \frac{|m|}{H} \cdot \frac{1}{m^2 \| m \alpha \|^2} &\ll \frac{M}{H} (\log M)^{2d+1} \ll 1, \end{split} \]
see the fourth claim in Lemma \ref{diophantinelemma}. Therefore
\[ V_M = \sum_{m=1}^M \frac{|\widehat{f}(m)|^2}{2 \sin^2 (\pi m \alpha )} + O \left( \max_{|\ell -k| \ll \log k} a_{\ell}^2 \cdot (\log \log M)^4 \right) , \]
and using $1/\sin^2 (\pi m \alpha ) = 1/(\pi^2 \| m \alpha \|^2) +O(1)$ we deduce
\[ \frac{1}{M} \sum_{N=1}^M \left( S_N (\alpha ,f) -E_M \right)^2 = \sum_{m=1}^M \frac{|\widehat{f}(m)|^2}{2 \pi^2 \| m \alpha \|^2} + O \left( \sqrt{\sum_{m=1}^M \frac{|\widehat{f}(m)|^2}{\| m \alpha \|^2}} + \max_{|\ell -k| \ll \log k} a_{\ell}^2 \cdot (\log \log M)^4 \right) . \]
Finally, the first claim \eqref{expectedSN} shows that the error of replacing $E_M$ by $A_M$ on the left-hand side is negligible, and \eqref{varianceSN} follows.
\end{proof}

\subsection{Indicators of intervals}

In the special case $f(x)=I_{[a,b]}(\{ x \}) - (b-a)$ with some $[a,b] \subset [0,1]$ and a badly approximable $\alpha$, Proposition \ref{momentsprop2} gives
\[ B_M^2 = \sum_{m=1}^M \frac{\sin^2 (\pi m (b-a))}{2 \pi^4 m^2 \| m \alpha \|^2} + O \left( \sqrt{\log M} \right) \]
with an implied constant depending only on $\alpha$. The order of magnitude of $B_M^2$ thus depends on the Diophantine approximation properties of the vector $(\alpha , b-a)$. As we observed in the previous section, $B_M^2 \ll \log M$ holds for any $[a,b] \subset [0,1]$. To establish a lower bound, we need to find positive integers $m$ for which $\| m \alpha \|$ is small, but $\| m(b-a) \|$ is bounded away from zero. In other words, we want the sequence $(m \alpha, m(b-a)) \pmod{1}$ to visit a narrow but long rectangle in $\mathbb{R}^2 / \mathbb{Z}^2$ many times. Our proof of Theorem \ref{BUtheorem} is based on a quantitative form of Kronecker's theorem due to Cassels \cite[p.\ 97--99]{CA}.
\begin{lem}[Cassels]\label{casselslemma} Let $\alpha_1, \dots, \alpha_n, \beta_1, \dots, \beta_n \in \mathbb{R}$, and let $C_1, \dots, C_n,Q>0$ be reals. Assume that for any integral vector $(m_1, \dots, m_n) \in \mathbb{Z}^n$,
\[ \| m_1 \beta_1 + \cdots +m_n \beta_n \| \le \frac{2^n}{((n+1)!)^2} \max \left\{ C_1 |m_1|, \dots, C_n |m_n|, Q \| m_1 \alpha_1 + \cdots +m_n \alpha_n \| \right\} . \]
Then the system of $n+1$ inequalities
\[ \begin{split} \| q \alpha_k - \beta_k \| &\le C_k \quad (1 \le k \le n), \\ |q| &\le Q \end{split} \]
has an integral solution $q \in \mathbb{Z}$.
\end{lem}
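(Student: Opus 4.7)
The plan is to prove the contrapositive via a geometry-of-numbers transference, deducing an obstructing integer vector from the assumption that no solution $q$ exists.

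First, I would recast the system as a lattice-point problem in $\mathbb{R}^{n+1}$. With coordinates $(q,p_1,\ldots,p_n)$, define the centrally symmetric parallelepiped
\[
S=\bigl\{(q,p_1,\ldots,p_n)\in\mathbb{R}^{n+1}:\ |q|\le Q,\ |q\alpha_k-p_k|\le C_k,\ 1\le k\le n\bigr\},
\]
together with the translation vector $v=(0,\beta_1,\ldots,\beta_n)$. A solution $q\in\mathbb{Z}$ exists exactly when $(S+v)\cap\mathbb{Z}^{n+1}\neq\emptyset$, since an integer point of $S+v$ supplies both $q$ and the nearest integers $p_k$ to $q\alpha_k$. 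I therefore assume $(S+v)\cap\mathbb{Z}^{n+1}=\emptyset$ and aim to extract a dual obstruction.

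The heart of the argument is an inhomogeneous Minkowski--Mahler transference: for a centrally symmetric convex body $S\subset\mathbb{R}^{n+1}$ whose translate $S+v$ avoids the integer lattice, there exists a nonzero $m=(m_0,m_1,\ldots,m_n)\in\mathbb{Z}^{n+1}$ with
\[
\bigl\|\langle m,v\rangle\bigr\| \ >\ \frac{2^n}{((n+1)!)^2}\,\max_{x\in S}\bigl|\langle m,x\rangle\bigr|.
\]
This is established by applying Minkowski's second theorem to the successive minima of $S$, passing to the polar body $S^{\ast}$ via Mahler's duality $\lambda_j(S)\,\lambda_{n+2-j}(S^{\ast})\le(n+1)!$, and observing that the failure of the specific translate $v$ to be covered by $S+\mathbb{Z}^{n+1}$ forces $\langle m,v\rangle$ to be quantitatively far from $\mathbb{Z}$.

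It remains to evaluate both sides explicitly. Writing $p_k=q\alpha_k-t_k$ with $|t_k|\le C_k$, the functional becomes $\langle m,x\rangle=q\bigl(m_0+\sum_k m_k\alpha_k\bigr)-\sum_k m_kt_k$; choosing the integer $m_0$ nearest to $-(m_1\alpha_1+\cdots+m_n\alpha_n)$ to minimise the coefficient of $q$, and optimising $(q,t_1,\ldots,t_n)$ inside $S$, yields
\[
\max_{x\in S}|\langle m,x\rangle|\ \ge\ \max\bigl\{\,C_1|m_1|,\ldots,C_n|m_n|,\ Q\,\|m_1\alpha_1+\cdots+m_n\alpha_n\|\,\bigr\},
\]
while $\langle m,v\rangle=m_1\beta_1+\cdots+m_n\beta_n$. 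Substituting into the transference inequality reproduces precisely the negation of the lemma's hypothesis; the subvector $(m_1,\ldots,m_n)$ cannot be zero, because then $\langle m,v\rangle=0$ would violate the strict inequality. The main obstacle is isolating the sharp constant $2^n/((n+1)!)^2$ in the transference step, which requires carefully combining Minkowski's second theorem applied to both $S$ and its polar $S^{\ast}$ (the classical ``double Minkowski'' argument of Cassels); the remaining reformulations and the degenerate-case check are routine bookkeeping once that inequality is in hand.
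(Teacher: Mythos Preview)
The paper does not prove this lemma at all: it is quoted verbatim as a result of Cassels with a page reference to his book, and is used as a black box in the proof of Theorem~\ref{BUtheorem}. Your sketch is, in outline, the argument Cassels himself gives on the cited pages---transference between the inhomogeneous problem and its dual via successive minima and Mahler's polar-body inequalities---so in that sense you are reproducing the intended proof rather than offering an alternative. One small wording issue: you speak of ``choosing the integer $m_0$ nearest to $-(m_1\alpha_1+\cdots+m_n\alpha_n)$'', but $m_0$ is handed to you by the transference step, not chosen; the point is simply that $|m_0+\sum_k m_k\alpha_k|\ge\|m_1\alpha_1+\cdots+m_n\alpha_n\|$ for \emph{every} integer $m_0$, which is what you need. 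Your candid remark that the sharp constant $2^n/((n+1)!)^2$ is where the real work lies is accurate; the rest is indeed bookkeeping.
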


\begin{proof}[Proof of Theorem \ref{BUtheorem}] We give separate proofs for rational and irrational values of $b-a$. First, assume that $b-a$ is irrational. Fix a constant $K>0$ such that
\[ \| q \alpha \| \ge \frac{K}{|q|}, \qquad \| k(b-a) \| \ge K, \qquad \| q \alpha -k(b-a) \| \ge K/|q| \]
for all integers $q \neq 0$ and all $k \in \{ 1,3,5,7,9 \}$. Let us apply Lemma \ref{casselslemma} with $n=2$ and
\[ \alpha_1=\alpha , \qquad \alpha_2 =b-a, \qquad \beta_1 =0, \qquad \beta_2 =\frac{1}{2}, \qquad C_1 = \frac{100}{KQ}, \qquad C_2 = 0.49, \]
and a large real number $Q>5/K$. We need to check that for all $(m_1, m_2) \in \mathbb{Z}^2$,
\begin{equation}\label{casselsinequality}
\| m_2 / 2 \| \le \frac{1}{9} \max \left\{ \frac{100|m_1|}{KQ}, 0.49 |m_2|, Q \| m_1 \alpha + m_2 (b-a) \| \right\} .
\end{equation}
This trivially holds if $m_2$ is even, and also if $|m_2| \ge 11$. We may thus assume that $|m_2| \in \{ 1,3,5,7,9 \}$, in which case the left hand side is $1/2$. If $m_1=0$, then the last term in the maximum satisfies $Q \| m_2 (b-a) \| \ge Q K > 5$, and \eqref{casselsinequality} follows. If $m_1 \neq 0$, then
\[ \max \left\{ \frac{100|m_1|}{KQ} , Q \| m_1 \alpha + m_2 (b-a) \| \right\} \ge \max \left\{ \frac{100 |m_1|}{KQ}, \frac{KQ}{|m_1|} \right\} \ge 10, \]
and \eqref{casselsinequality} follows once again.

Hence for any $Q>5/K$ there exists an integer $|q| \le Q$ with $\| q \alpha \| \le 100/(KQ)$ and $\| q (b-a) -1/2 \| \le 0.49$; the latter inequality is equivalent to $\| q (b-a) \| \ge 1/100$. Since $\| q \alpha \| \ge K/|q|$, the integer solution $q$ also satisfies $|q| \ge (K^2/100)Q$. Note that $-q$ is a solution if and only if $q$ is. In particular, there exist $\gg \log M$ integers $m \in [1,M]$ such that $\| m \alpha \| \ll 1/m$ and $\| m (b-a) \| \ge 1/100$. Therefore
\[ \sum_{m=1}^M \frac{\sin^2 (\pi m (b-a))}{2 \pi^4 m^2 \| m \alpha \|^2} \gg \log M, \]
and the claim follows.

Next, assume that $0<b-a<1$ is rational, say $b-a=s/r$ with relatively prime positive integers $s,r$. Then $\sin^2 (\pi m (b-a)) \gg 1$ for any integer $m$ not divisible by $r$. Applying Lemma \ref{casselslemma} with $n=1$; $\alpha_1=r\alpha$; $\beta_1=-\alpha$ and a large real $Q>0$, we similarly deduce that there exists an integer $Q \ll |q| \le Q$ such that $\| (qr+1) \alpha \| \ll 1/Q$. In particular, there exist $\gg \log M$ integers $m \in [1,M]$ such that $\| m \alpha \| \ll 1/m$ and $m \equiv \pm 1 \pmod{r}$, and the claim follows.
\end{proof}

Finally, we give an alternative proof of Theorem \ref{BUtheorem} for rational $b-a=s/r$. As observed, it is enough to show that there exist $\gg \log M$ integers $m \in [1,M]$ such that $\| m \alpha \| \ll 1/m$, and $m$ is not divisible by $r$. The convergents to $\alpha$ satisfy the identity $q_{\ell} p_{\ell-1} - q_{\ell-1} p_{\ell} = (-1)^{\ell}$. Therefore among any two consecutive convergent denominators, at least one is not divisible by $r$. This yields $\gg \log M$ integers $m$ with the desired properties.

\subsection{Central limit theorem for functions of bounded variation}

\begin{proof}[Proof of Theorem \ref{BVCLTtheorem}] This is very similar to the proof of Theorem \ref{2dimCLTtheorem}, so we only indicate the necessary changes. Throughout, constants and implied constants depend only on $\alpha$.

We may assume that $V(f)=1$, and that $B_M \gg (\log M)^{1/3} (\log \log M)^2$. We saw in the proof of Proposition \ref{momentsprop2} (cf.\ \eqref{identification}) that, for $1 \le N \le M$,
\begin{equation}\label{SN-AM}
S_N(\alpha, f) - A_M = - \sum_{\substack{m \in \mathbb{Z} \\ 0<|m|< M \log M}} \left( 1-\frac{|m|}{\lfloor M \log M \rfloor} \right) \widehat{f}(m) \frac{e^{2 \pi i N m \alpha}}{1-e^{2 \pi i m \alpha}} + O((\log \log M)^2) .
\end{equation}
We will prove that given any $\xi \in \mathbb{R}$, with $M=q_K$ a convergent denominator and any $q_K \ll Q_K \ll q_K$,
\[ T_N(f) := \sum_{\substack{m \in \mathbb{Z} \\ 0<|m|< Q_K \log Q_K}} \left( 1-\frac{|m|}{\lfloor Q_K \log Q_K \rfloor} \right) \widehat{f}(m) \frac{e^{2 \pi i (N+\xi) m \alpha}}{1-e^{2 \pi i m \alpha}}, \quad 0 \le N <q_K \]
satisfies the central limit theorem: for any $t \in \mathbb{R}$,
\begin{equation}\label{TNfCLT}
\frac{1}{q_K} \left| \left\{ 0 \le N<q_K \, : \, \frac{T_N(f)}{B_{q_K}} \le t \right\} \right| = \int_{-\infty}^t \frac{e^{-y^2/2}}{\sqrt{2 \pi}} \, \mathrm{d} y + O \left( \frac{K^{1/3} (\log K)^2}{B_{q_K}} \right) .
\end{equation}
The main difference compared to the proof of Theorem \ref{2dimCLTtheorem} is that $T_N(f)$ is real-valued, as can be seen by combining the $m$ and $-m$ terms, thus the limit distribution is now a $1$-dimensional standard Gaussian.

Observe that the coefficients in $T_N(f)$ satisfy the same decay as those in $T_N$:
\[ \left| \left( 1-\frac{|m|}{\lfloor Q_K \log Q_K \rfloor} \right) \widehat{f}(m) \frac{1}{1-e^{2 \pi i m \alpha}} \right| \ll \frac{1}{|m| \cdot \| m \alpha \|} . \]
Hence if $N$ is chosen from $[0,q_K)$ uniformly at random, then $\mathbb{E}T_N(f)=O((\log K)^2)$ and $\mathbb{E} |T_N(f)|^2=B_{q_K}^2 + O(B_{q_K} (\log K)^2)$, see $V_M$ in the proof of Proposition \ref{momentsprop2}; an analogue of Lemma \ref{covariancelemma}. Let
\[ k_0 = \left\lfloor \frac{10 \log K}{\log (1/\alpha)} \right\rfloor \quad \textrm{and} \quad S= \left\{ 0<|m|< Q_K \log Q_K \, : \, H(|m|)-h(|m|) \le k_0 \right\} . \]
As an analogue of Lemma \ref{L2lemma2}, we deduce that
\[ T_N(f) = \sum_{m \in S} \left( 1-\frac{|m|}{\lfloor Q_K \log Q_K \rfloor} \right) \widehat{f}(m) \frac{e^{2 \pi i (g(N)+\xi) m \alpha}}{1-e^{2 \pi i m \alpha}} + Z_N (f), \quad 0 \le N<q_K \]
with some $Z_N(f)$, $0 \le N<q_K$ that satisfies $q_K^{-1} \sum_{N=0}^{q_K-1} |Z_N(f)|^2 \ll (\log K)^4$.

Let us choose $x$ from the interval $[0,\alpha^{-K})$ uniformly at random, and define
\[ U(x,f) = \sum_{m \in S} \left( 1-\frac{|m|}{\lfloor Q_K \log Q_K \rfloor} \right) \widehat{f}(m) \frac{e^{2 \pi i (G(x,m) + \xi m \alpha)}}{1-e^{2 \pi i m \alpha}} , \]
where $G(x,m)$, $m>0$ is as in Section \ref{markovchainsection}, extended via $G(x,-m):=-G(x,m)$ for negative integers. Similarly to Lemma \ref{UxCLTlemma}, we reduce the central limit theorem for $T_N(f)$ to that for $U(x,f)$: we have $\mathbb{E}U(x,f) = O((\log K)^2)$ and $\mathbb{E} |U(x,f)|^2 = B_{q_K}^2 + O(B_{q_K} (\log K)^2)$. Further,
\[ \sup_{t \in \mathbb{R}} \left| \Pr \left( \frac{T_N(f)}{B_{q_K}} \le t \right) - \Phi(t) \right| = \sup_{t \in \mathbb{R}} \left| \Pr \left( \frac{U(x,f)}{B_{q_K}} \le t \right) - \Phi(t) \right| + O \left( \frac{(\log K)^{4/3}}{B_{q_K}^{2/3}} \right) , \]
where $\Phi (t)=\int_{-\infty}^t e^{-y^2/2}/\sqrt{2 \pi} \, \mathrm{d} y$.

Following the steps in Section \ref{independentsection}, we can approximate $U(x,f)$ by a sum of independent random variables, leading to
\[ \sup_{t \in \mathbb{R}} \left| \Pr \left( \frac{U(x,f)}{B_{q_K}} \le t \right) - \Phi(t) \right| \ll \frac{K \log K}{R B_{q_K}} + \frac{R^{1/2} (\log K)^{5/2}}{B_{q_K}} . \]
The optimal choice is once again $R=\lfloor K^{2/3}/\log K \rfloor$, in which case the error term is $O(K^{1/3} (\log K)^2/B_{q_K})$. This establishes \eqref{TNfCLT}, and in particular Theorem \ref{BVCLTtheorem} for $M=q_K$ a convergent denominator.

Taking the average in \eqref{SN-AM} over $N \in [1,M']$ and applying Lemma \ref{diophantinelemma2}, we deduce that for any $3 \le M' \le M$,
\[ \begin{split} |A_M - A_{M'}| &= \left| \frac{1}{M'} \sum_{N=1}^{M'} \sum_{\substack{m \in \mathbb{Z} \\ 0<|m|< M \log M}} \left( 1-\frac{|m|}{\lfloor M \log M \rfloor} \right) \widehat{f}(m) \frac{e^{2 \pi i N m \alpha}}{1-e^{2 \pi i m \alpha}} \right| + O((\log \log M)^2) \\ &\ll \sum_{1 \le m < M \log M} \frac{1}{m \| m \alpha \|} \min \left\{ \frac{1}{M' \| m \alpha \|} ,1 \right\} + (\log \log M)^2 \\ &\ll \left( 1+\log \frac{M}{M'} \right)^2 + (\log \log M)^2 . \end{split} \]
By \eqref{varianceSN} and the fourth claim in Lemma \ref{diophantinelemma}, for any $3 \le M' \le M$,
\[ | B_{M}^2 - B_{M'}^2 | = \sum_{m=M'+1}^M \frac{|\widehat{f}(m)|^2}{2 \pi^2 \| m \alpha \|^2} + O(B_M) \ll \sum_{m=M'+1}^M \frac{1}{m^2 \| m \alpha \|^2} + B_M \ll \log \frac{M}{M'} + B_M . \]
We follow the steps in Section \ref{CLTproofsection} to extend the result from $M=q_K$ (a convergent denominator) to general $M$. With the notation of that section, formula \eqref{TNfCLT} gives
\[ \frac{1}{q_{\ell}} \left| \left\{ 0 \le N <q_{\ell} \, : \, \frac{S_{M_{\ell,b}+N} (\alpha, f) - A_{M_{\ell, b} +q_{\ell}}}{B_{q_{\ell}}} \le t \right\} \right| = \Phi (t) + O \left( \frac{(\log M)^{1/3} (\log \log M)^2}{B_{q_{\ell}}} \right) . \]
Here
\[ |A_{M_{\ell,b}+q_{\ell}}-A_M| \ll \left( 1 + \log \frac{q_K}{q_{K-k_0}} \right)^2 + (\log \log M)^2 \ll (\log \log M)^2 , \]
and
\[ \frac{B_{q_{\ell}}}{B_M} = 1 + O \left( \frac{\log \frac{q_K}{q_{K-k_0}}}{B_M^2} + \frac{1}{B_M} \right) = 1 + O \left( \frac{1}{B_M} \right) , \]
hence $A_{M_{\ell,b}+q_{\ell}}$ resp.\ $B_{q_{\ell}}$ can be replaced by $A_M$ resp.\ $B_M$ up to a negligible error $O((\log \log M)^2/B_M)$. Summing over $\ell,b$ leads to
\[ \frac{1}{M^*} \left| \left\{ 0 \le N<M^* \, : \, \frac{S_N (\alpha, f) - A_M}{B_M} \le t  \right\} \right| = \Phi (t) + O \left( \frac{(\log M)^{1/3}(\log \log M)^2}{B_M} \right) . \]
Here $M-M^* \ll M/(\log M)^{10}$, therefore we can replace $M^*$ by $M$. This finishes the proof of Theorem \ref{BVCLTtheorem} for general $M \ge 3$.
\end{proof}

\section*{Acknowledgement}

The author is supported by the Austrian Science Fund (FWF) projects Y 901 and F 5510, which is part of the Special Research Program (SFB) ``Quasi-Monte Carlo Methods: Theory and Applications''. I would like to thank the referee for a careful reading of the manuscript and valuable comments.

\end{document}